\documentclass{amsart}
\usepackage[utf8]{inputenc}
\usepackage[a4paper, total={6in, 8in}]{geometry}
\usepackage[T1]{fontenc}

\usepackage{amsmath}
\usepackage{bm}
\usepackage{mathtools}
\usepackage{amssymb}
\usepackage{enumitem}

\usepackage{xcolor}
\usepackage{graphicx}
\usepackage{caption}
\usepackage{wrapfig}
\usepackage{float}
\usepackage[justification=centering]{caption}
\usepackage{subcaption}
\usepackage{geometry}
\usepackage{pgf,tikz,pgfplots}
\pgfplotsset{compat=1.15}
\usetikzlibrary{arrows}
\graphicspath{ {./images/} }
\usepackage{tikz-cd}
\usepackage{hyperref}
\usepackage{mathrsfs}
\usepackage{amsthm}

\usepackage{color}
\definecolor{purple}{rgb}{.5, 0, .635} 
\definecolor{green}{rgb}{0.16, 0.62, 0.38}
\definecolor{red}{rgb}{1,.45,.45}

\usepackage{comment}

\DeclarePairedDelimiter{\set}{\{}{\}}

\newcommand{\R}{\mathbb R}

\newcommand{\Z}{\mathbb Z}

\newcommand{\Hyp}{\mathbb H}

\newcommand{\F}{\mathcal{F}}

\DeclareMathOperator{\Homeo}{Homeo}

\DeclareMathOperator{\fr}{fr}
\DeclareMathOperator{\End}{End}
\DeclareMathOperator{\core}{core}

\theoremstyle{plain}
\newtheorem{thm}{Theorem}[section]
\newtheorem{lem}[thm]{Lemma}
\newtheorem{cor}[thm]{Corollary}

\newtheorem{prop}[thm]{Proposition}

\newtheorem{obs}{Observation}

\theoremstyle{definition}
\newtheorem{defn}[thm]{Definition}

\theoremstyle{remark}
\newtheorem{rem}[thm]{Remark}
\title{Simultaneous Laminations}
\author{Isaac Broudy}

\begin{document}

\maketitle

\begin{abstract}
    Calegari introduced the laminations $\Lambda^\pm_u$ associated to a universal circle.  We study the laminations $\Lambda^\pm_u$ for pseudo-Anosov orbit space universal circles of taut foliations, on atoroidal three-manifolds. We prove that $\Lambda^+_u$ and $\Lambda^-_u$ are completely determined by the stable and unstable prelaminations on the boundary of the orbit space. Then, using a result of Barthelm\'e, Bonatti, and Mann, we prove that for any action $\rho:\pi_1(M)\to \Homeo^+(S^1)$ coming from an orbit space of a pseudo-Anosov flow, there is a finite collection of lamination pairs such that $(\Lambda^+_u,\Lambda^-_u)$ lies in this collection for any minimal orbit space universal circle whose action is conjugate to $\rho$. 
\end{abstract}

\section{Introduction}

For any taut foliation $\F$ on a closed atoroidal three-manifold $M$ there exists a universal circle for $\F$ \cite{Thurston1997}, \cite{Thurston1998}, and \cite{CalegariDunfield2003}. A universal circle for $\F$ consists of a topological circle, $S^1_u$, equipped with an action $\rho:\pi_1(M)\to \Homeo^+(S^1_u)$ and an equivariant family of monotone maps $\left\{\phi_\lambda\colon S^1_u\to \partial_\infty\lambda \middle \vert \lambda\in \tilde{\F} \right\}$, where $\tilde{\F}$ is the lift of $\F$ to the universal cover $\widetilde{M}$ and $\partial_\infty\lambda$ is the ideal boundary of a leaf $\lambda$. By work of Calegari \cite{Calegari2006}, for any minimal universal circle there exists a pair of $\pi_1(M)$-invariant laminations of $S^1_u$, denoted $\Lambda^\pm_u$, which may be viewed as analogues of the stable and unstable foliations of a pseudo-Anosov flow.

For a pseudo-Anosov flow $\psi$ on a closed three-manifold $M$, with transverse singular foliations $\F^s_\psi$ and $\F^u_\psi$ on $M$, the orbit space $\mathcal{O}$ is a topological plane and is equipped with a pair of one-dimensional singular foliations, $\bar{\F}^s_\psi$ and $\bar{\F}_\psi^u$, and a $\pi_1(M)$-action preserving each foliation \cite{Barbot1995}, \cite{Fenley1994}, and \cite{FenleyMosher2001}. By work of \cite{Fenley2012} and \cite{Bonatti2024}, any bifoliated plane admits a canonical compactification by a circle, denoted $\partial\mathcal{O}$. Additionally, the action of $\pi_1(M)$ on $\mathcal{O}$ extends to an action of $\pi_1(M)$ on $\partial\mathcal{O}$ by orientation preserving homeomorphisms of $S^1$. Each non-singular leaf determines a single unordered pair of distinct points in $\partial \mathcal{O}$, while each singular $p$-prong determines $p$ unordered pairs which form a $p$-gon of leaves in $\partial\mathcal{O}$. The collection of all unordered pairs of endpoints from stable and unstable leaves is denoted $\partial \mathcal{O}^s$ and $\partial \mathcal{O}^u$, respectively. 

Recent work of Landry, Minsky, and Taylor \cite{LandryMinskyTaylor2024} shows that for any pseudo-Anosov flow $\psi$ on an atoroidal $M$ and any codimension-one foliation $\F$ almost transverse to $\psi$ (i.e. there exists a dynamic blow up of $\psi$ which is transverse to $\F$), there exists a collection of monotone maps $\phi_\lambda$ making $\partial\mathcal{O}$, along with the induced action of $\pi_1(M)$ on $\partial \mathcal{O}$, a universal circle for $\F$.

The aim of this paper is to understand to what extent $\partial\mathcal{O}^{s,u}$ and $\Lambda^\pm_u$ agree as prelaminations of $\partial\mathcal{O}$, and how this depends on the leaf spaces of $\bar\F_\psi^{s,u}$. When $\F$ is a depth-one foliation and $\psi$ is a transverse pseudo-Anosov flow without perfect fits, Huang \cite{Huang2026} proved that $\Lambda^+_u=\partial\mathcal{O}^s$ and $\Lambda^-_u=\partial\mathcal{O}^u$. We establish the relationship between $\Lambda^\pm_u$ and $\partial\mathcal{O}^{s,u}$ for any taut foliation $\F$ and any almost transverse pseudo-Anosov flow $\psi$. In doing so, we will show that the $\Lambda^\pm_u$ laminations contain a significant amount of the bifoliation on $\mathcal{O}$ and additional data about how leaves of $\tilde{\F}$ accumulate on leaves of $\tilde{\F}^{s}_\psi$ and $\tilde{\F}^{u}_\psi$, the lifts of $\F^s_\psi$ and $\F^u_\psi$ to $\widetilde{M}$. Moreover, we will show that given an action $\pi_1(M)\curvearrowright S^1$ coming from an orbit space of a pseudo-Anosov flow, the $\Lambda^\pm_u$ laminations for any minimal orbit space universal circle with the same action are determined by the action up to finite ambiguity. This constrains the possible foliations which are almost transverse to $\psi$.

The leaf space of $\F^{s,u}_\psi$ is either $\R$, a simply connected non-Hausdorff one-manifold, a real tree, or a non-Hausdorff tree \cite{GabaiOertel1989}, \cite{GabaiKazez1997}, and \cite{Fenley2003}. A branching leaf $l\in \bar\F^{s,u}_\psi$ (similarly, a face of $l$, if $l$ is singular) has \textit{branching on one side} if there exists a leaf branching with $l$ in only one connected component of $\mathcal{O}\setminus l$, and has \textit{branching on both sides} otherwise. Note this terminology is distinct from one-sided or two-sided branching of a co-orientable taut foliation.

In general, the leaf space of $\bar\F^s_\psi$ and $\bar\F^u_\psi$ need not be homeomorphic. For this reason, the main theorems are stated and proved for $\bar\F^s_\psi$ and $\bar\F^u_\psi$ separately. Each of our main theorems concern $\partial\mathcal{O}$ which are minimal universal circles (see Theorem \ref{LMTmin} for conditions making $\partial\mathcal{O}$ minimal). The first of our main results concerns $\psi$ which has either a Hausdorff stable or Hausdorff unstable leaf space.
\begin{thm}\label{HausdorffThm}
    Let $\psi$ be a pseudo-Anosov flow on $M$ such that $\F^s_\psi$ ($\F^u_\psi$, respectively) has a Hausdorff leaf space, possibly with perfect fits, and let $\F$ be a foliation almost transverse to $\psi$. Suppose that $(\partial\mathcal{O},\phi_{\lambda})$ is a minimal universal circle. If there exists $\lambda\in L$ such that $\fr\Omega_\lambda$ contains a stable leaf slice (unstable leaf slice, respectively), then $\Lambda^+_u = \partial\mathcal{O}^s$  ($\Lambda^-_u=\partial \mathcal{O}^u$, respectively).
\end{thm}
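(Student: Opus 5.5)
Recall Calegari's construction. For a minimal universal circle, each leaf $\lambda$ of the leaf space $L$ of $\tilde{\F}$ gives a set of gaps of $\phi_\lambda$, i.e.\ the closures of the complements of $\phi_\lambda$'s core, and hence a lamination $\Lambda(\lambda)$ of $S^1_u$. Then $\Lambda^+_u$ is the closure of the union over $\lambda$ of the boundaries of the sets $\bigcup_{\mu>\lambda}\Lambda(\mu)$ (roughly, leafwise limits taken from above), and $\Lambda^-_u$ is defined the same way from below. In the orbit space picture, $\Omega_\lambda\subset\mathcal{O}$ is the projection of the lift $\lambda$. It is an open region whose frontier $\fr\Omega_\lambda$ consists of stable and unstable leaf slices. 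The gaps of $\phi_\lambda$ are the arcs of $\partial\mathcal{O}$ cut off by the frontier components of $\Omega_\lambda$, so the endpoint pairs of those frontier slices are exactly the leaves of $\Lambda(\lambda)$.

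I will prove the stable case; the unstable case is symmetric. The proof has two inclusions.

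\textbf{First inclusion, $\Lambda^+_u\subseteq\partial\mathcal{O}^s$.} I will check that every boundary leaf contributing to $\Lambda^+_u$ is the endpoint pair of a stable leaf, or of a face of a singular prong. As $\mu$ increases, $\Omega_\mu$ sweeps in a fixed direction, which by convention I take to be positively transverse to $\bar\F^s_\psi$ for the ``$+$'' laminations. Limits of the frontier pieces $\fr\Omega_\mu$ taken from above are therefore stable slices. Here I use that the stable leaf space is Hausdorff: it is $\R$ or a real tree, with no non-separated leaves. So a limit of stable slices is a single stable leaf (or a prong face), not a union of several non-separated leaves, and its endpoints form an element of $\partial\mathcal{O}^s$. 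Perfect fits could let unstable leaves appear in a limit. They only do so as the common endpoint of a perfect fit, so the corresponding pair of endpoints is still stable. Finally, $\partial\mathcal{O}^s$ is closed, since with Hausdorff leaf space the set of stable endpoint pairs is closed in the space of unordered pairs, and so the closure defining $\Lambda^+_u$ stays inside $\partial\mathcal{O}^s$.

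\textbf{Second inclusion, $\partial\mathcal{O}^s\subseteq\Lambda^+_u$.} By hypothesis, some $\lambda\in L$ has a stable slice $s\subset\fr\Omega_\lambda$, and by the first step its endpoint pair $\ell_s$ lies in $\Lambda^+_u$. Since $\Lambda^+_u$ is closed and $\pi_1(M)$-invariant, it contains the closure of the orbit $\pi_1(M)\cdot\ell_s$. It remains to show this closure is all of $\partial\mathcal{O}^s$. I expect to use minimality of the action of $\pi_1(M)$ on the stable leaf space, a standard fact for pseudo-Anosov flows on atoroidal $M$ (every stable leaf is dense). Together with the Hausdorff tree or $\R$ structure, this gives that every stable leaf is a limit of translates $g\cdot s$. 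One then needs the endpoints to converge in $\partial\mathcal{O}$, and uses that a prong face is a limit of nonsingular leaves.

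\textbf{Main obstacle.} The hardest step is the continuity needed at the end of the second inclusion. Convergence of leaves in the leaf space must imply convergence of their endpoint pairs in $\partial\mathcal{O}$. Perfect fits can make the endpoints jump: a sequence of stable leaves can converge to a leaf $l$ while their endpoints converge to the endpoints of a leaf making a perfect fit with $l$. I would first try to handle this using Fenley's description of the topology of $\partial\mathcal{O}$ through ideal points of quadrants. The idea is to choose the approximating translates from the side of $l$ away from the perfect fit. This is possible because Hausdorffness of the stable leaf space means leaves accumulate on $l$ from both sides, and I expect to use the same fact again to verify closedness of $\partial\mathcal{O}^s$ in the first inclusion.
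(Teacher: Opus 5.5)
\textbf{There is a genuine gap.} Both of your inclusions depend on identifying the leaves of $\Lambda^+(\lambda)$ with the endpoint pairs of the \emph{stable} frontier chains of $\Omega_\lambda$ (Corollary~\ref{orbit2calegari}). Your proposal neither states the right definition nor proves this identification.

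In Calegari's construction, $\Lambda^+(\lambda)$ is the set of boundary leaves of the convex hull of $\core(L^+(\lambda))$. Here $L^+(\lambda)$ is the whole component of $L\setminus\set{\lambda}$ on the positive side. It is not built from gaps of the individual $\phi_\mu$ with $\mu>\lambda$. Moreover, $L^+(\lambda)$ contains leaves incomparable with $\lambda$, for instance leaves negatively branching with $\lambda$ or with leaves above it.

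Your ``sweeping from above'' heuristic at best covers $\mu>\lambda$. Even there, the needed statement is the precise nesting $\Omega_\mu\subseteq\Omega^s_\lambda$. This holds because $\lambda$ accumulates on the lift of each stable frontier slice going up with the flow, so no leaf above $\lambda$ can reach past that slice (Proposition~\ref{NEWshadowcontainment}). For incomparable $\mu\in L^+(\lambda)$ you must show separately that $\Omega_\mu$ is cut off from $\Omega_\lambda$ by an \emph{unstable} frontier component. That is the branching analysis of Lemma~\ref{branching2frontier}, Corollary~\ref{incomparableShadows} and Lemma~\ref{coredisjointness}, which gives $\core(L^+(\lambda))\cap I^s_\lambda=\emptyset$. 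Only this shows that $\End(s)$ is a leaf of $\Lambda^+(\lambda)$ for a stable frontier slice $s$. Your sentence ``by the first step $\ell_s\in\Lambda^+_u$'' is a non sequitur, since your first step proves the opposite inclusion.

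You also never use minimality, yet it is exactly what rules out extra leaves in $\Lambda^+(\lambda)$. The paper proves $\core(L^+(\lambda))=\partial\mathcal{O}\setminus I^s_\lambda$ (Proposition~\ref{core+-}) by combining three facts:
\begin{itemize}
\item $\core(L)=\partial\mathcal{O}$, which is minimality;
\item lower semicontinuity of cores;
\item $\core(L^-(\lambda))\cap I^u_\lambda=\emptyset$.
\end{itemize}
Together these force $I^u_\lambda\subseteq\core(L^+(\lambda))$. Without this reverse containment, arcs cut off by unstable frontier slices could be complementary to $\core(L^+(\lambda))$. So could arcs cut off by mixed stable/unstable chains: by Theorem~\ref{LMTthm2}, gaps can be spanned by such chains, even in the Hausdorff case with perfect fits, contrary to your claim that gaps are cut off by single slices. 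Such arcs would put non-stable pairs into $\Lambda^+_u$, and then $\Lambda^+_u\subseteq\partial\mathcal{O}^s$ fails.

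Once Proposition~\ref{core+-} is in place, the rest of your outline is essentially the paper's argument:
\begin{itemize}
\item Hausdorffness makes each stable chain a single face and makes $\partial\mathcal{O}^s$ closed (Barthelm\'e--Bonatti--Mann).
\item Orbit density together with invariance and closedness of $\Lambda^+_u$ gives the other inclusion. The paper first pushes $\End(C)$ onto a regular leaf through a noncorner periodic orbit (Lemma~\ref{noncornerleaf}), then approximates each face from one side (Lemma~\ref{regularleafdyn}).
\end{itemize}
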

See Section \ref{subsec: pAprelim} for the definition of a leaf slice. The collections $\partial \mathcal{O}^s$ and $\partial \mathcal{O}^u$ define a pair of transverse laminations of $\partial \mathcal{O}$ if and only if $\psi$ has Hausdorff leaf spaces \cite{BarthelmeBonattiMann2025}. Therefore, we can not have a direct analogue of Theorem \ref{HausdorffThm} in the case where $\psi$ has non-Hausdorff leaf spaces. The next theorem applies to $\psi$ with either a stable or unstable foliation which is non-Hausdorff, whose branching leaves branch on one side only.
\begin{thm}\label{OneSided}
    Let $\psi$ be a pseudo-Anosov flow on $M$ such that $\F^s_\psi$ ($\F^u_\psi$, respectively) is non-Hausdorff and contains no leaves branching on both sides, and let $\F$ be a foliation almost transverse to $\psi$. Suppose that $(\partial\mathcal{O},\phi_\lambda)$ is a minimal universal circle. If there exists $\lambda\in L$ such that $\fr\Omega_\lambda$ contains a stable leaf slice (unstable leaf slice, respectively), then $\partial\mathcal{O}^s\subsetneq\Lambda^+_u$ ($\partial\mathcal{O}^u\subsetneq\Lambda^-_u$, respectively).
\end{thm}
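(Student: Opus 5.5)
The proof splits into two steps: first $\partial\mathcal{O}^s\subseteq\Lambda^+_u$, then exhibiting a leaf of $\Lambda^+_u$ not in $\partial\mathcal{O}^s$. Only the stable case is treated; the unstable case is symmetric.

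\emph{Inclusion.} Recall Calegari's definition. For each $\lambda\in L$, the core of $\phi_\lambda$ (the complement of the interiors of its gaps) gives a closed subset of $\partial\mathcal{O}$. The boundary leaves of the convex hulls of these cores form the leafwise prelamination, and $\Lambda^+_u$ is the closure of the union over all $\lambda$ of these leaves, taken along one transverse direction.

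By hypothesis, $\fr\Omega_\lambda$ contains a stable leaf slice for some $\lambda$. In the Landry--Minsky--Taylor construction the map $\phi_\lambda$ collapses exactly the arcs of $\partial\mathcal{O}$ cut off by the frontier components of $\Omega_\lambda$. So the endpoint pair of that stable leaf slice, or the corresponding face of a singular leaf, is a boundary leaf of the hull of $\core(\phi_\lambda)$. Hence some pair $\ell_0\in\partial\mathcal{O}^s$ lies in $\Lambda^+_u$.

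Next I use $\pi_1(M)$-invariance of $\Lambda^+_u$ together with density of the orbit. Since the universal circle is minimal, the flow is transitive, so the $\pi_1(M)$-orbit of the stable leaf through $\ell_0$ is dense in the stable leaf space. Because $\Lambda^+_u$ is closed, every pair in $\partial\mathcal{O}^s$ that is a limit of translates of $\ell_0$ lies in $\Lambda^+_u$.

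The main obstacle is that, in the non-Hausdorff leaf space, limits of leaves converge to a union of pairwise non-separated leaves. The corresponding endpoint pairs in $\partial\mathcal{O}$ then converge to a pair joining the extreme endpoints of the branching family. I would handle this as follows.
\begin{itemize}
\item For a non-branching stable leaf $l$, approximate it by translates of $\ell_0$ from the side where the leaf space is locally Hausdorff. Such approximations exist since each leaf branches on at most one side. This gives $l\in\Lambda^+_u$.
\item For a branching leaf $l$, which branches on one side only, approximate from its non-branching side. The pairs then converge to exactly the endpoints of $l$, as in the argument for Theorem \ref{HausdorffThm}.
\end{itemize}
This yields $\partial\mathcal{O}^s\subseteq\Lambda^+_u$.

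\emph{Strictness.} Since the leaf space is non-Hausdorff, there are non-separated stable leaves $l_1,l_2$ branching on one side. Approximate them by stable leaves $l_n$ from the branching side. The endpoint pairs of the $l_n$ lie in $\Lambda^+_u$ by the inclusion just proved, and they converge to a pair $\{a,b\}$ with $a$ an endpoint of $l_1$ and $b$ an endpoint of $l_2$. This pair is in $\Lambda^+_u$ because $\Lambda^+_u$ is closed.

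It remains to show $\{a,b\}\notin\partial\mathcal{O}^s$. The points $a$ and $b$ are ideal points of distinct leaves that are not joined by a single stable leaf. The only other way they could be the endpoints of a stable leaf is for $l_1$ and $l_2$ to be faces of one singular leaf. To rule this out, I would use the description of ideal points of the bifoliated plane from \cite{Bonatti2024}: distinct non-separated leaves are never faces of a common prong. Hence $\partial\mathcal{O}^s\subsetneq\Lambda^+_u$.
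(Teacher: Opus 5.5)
\textbf{There is a genuine gap in your first step, where you place a stable pair in $\Lambda^+_u$.}

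Calegari's $\Lambda^+(\lambda)$ is the boundary of the convex hull of $\core(L^+(\lambda))$. That set is the closure of the union of the cores of \emph{all} leaves on the positive side of $\lambda$; $\Lambda^+(\lambda)$ is not built from the hull of $\core(\phi_\lambda)$. So knowing that a pair bounds a gap of $\phi_\lambda$ does not put it in $\Lambda^+_u$.

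Your argument also never uses that the slice is stable. Run verbatim on an unstable frontier slice, it would put unstable pairs into $\Lambda^+_u$, which is false in general. Under the hypotheses of Theorem \ref{HausdorffThm} one has $\Lambda^+_u=\partial\mathcal{O}^s$, and a stable and an unstable leaf through a common point have linked endpoints.

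The missing idea is the paper's identification $\core(L^+(\lambda))=\partial\mathcal{O}\setminus I^s_\lambda$ (Proposition \ref{core+-}). It has two directions:
\begin{enumerate}
\item Every leaf above $\lambda$, whether comparable to it or reached through branching, has its shadow on the $\Omega_\lambda$ side of each stable frontier component, so its core misses $I^s_\lambda$. This uses Proposition \ref{NEWshadowcontainment}, Lemma \ref{branching2frontier}, Corollary \ref{incomparableShadows} and Lemma \ref{coredisjointness}.
\item Minimality, together with the analogous statement for $L^-(\lambda)$, forces $I^u_\lambda\subseteq\core(L^+(\lambda))$.
\end{enumerate}
Only then does Corollary \ref{orbit2calegari} say that $\Lambda^+(\lambda)$ consists of the endpoints of stable frontier chains.

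\textbf{Even granting this, your $\ell_0$ is misidentified.} By Theorem \ref{LMTthm2}, a gap is spanned by a frontier chain, which may consist of several slices of mixed type. So the endpoints of a single stable slice need not bound a gap.

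The leaf of $\Lambda^+(\lambda)$ you actually obtain is $\End(C)$ for a stable frontier chain $C$. Such a $C$ may consist of several non-separated leaves sharing ideal points, so $\End(C)$ need not lie in $\partial\mathcal{O}^s$; Section \ref{sec:Example} produces exactly such a diagonal leaf. Hence you do not yet have an element of $\partial\mathcal{O}^s$ to translate around.

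The paper bridges this with Lemma \ref{noncornerleaf}. Pick a noncorner periodic point in $\Omega_\lambda$ on a regular stable leaf $l$ whose unstable leaf crosses $C$, and iterate an element $g$ stabilizing it. Its source--sink dynamics on $\partial\mathcal{O}$ give $\End(g^n C)\to\End(l)$, so $\End(l)\in\Lambda^+_u$.

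\textbf{From that point on, your approach matches the paper's.} Your density argument and your strictness argument agree with it, with one correction: take $l_1,l_2$ to be the two extremal leaves of the cataclysm. Otherwise the limit pair is not $\{a,b\}$ with $a\in l_1$ and $b\in l_2$.

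A minor point: transitivity of $\psi$ comes from atoroidality of $M$ (Mosher), not from minimality of the universal circle.
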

We also obtain a result regarding the closure of $\partial \mathcal{O}^{s,u}$ in the space of unordered pairs of distinct points, denoted by $\overline{\partial\mathcal{O}^{s,u}}$, which is a lamination of $\partial \mathcal{O}$. The following corollary is a consequence of the proof of Theorem \ref{OneSided}.
\begin{cor}\label{OneSidedCor}
    Let $\psi$ be a pseudo-Anosov flow on $M$ such that $\F^s_\psi$ ($\F^u_\psi$, respectively) contains no leaves branching on both sides, and let $\F$ be a foliation almost transverse to $\psi$. Suppose that $(\partial\mathcal{O},\phi_\lambda)$ is a minimal universal circle. If there exists $\lambda\in L$ such that $\fr\Omega_\lambda$ contains a stable leaf slice (unstable leaf slice, respectively), then $\overline{\partial\mathcal{O}^s}\subseteq\Lambda^+_u$  ($\overline{\partial\mathcal{O}^u}\subseteq\Lambda^-_u$, respectively).
\end{cor}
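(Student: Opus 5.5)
The plan is to combine two facts: $\Lambda^+_u$ is closed in the space of unordered pairs of distinct points, and $\partial\mathcal{O}^s\subseteq\Lambda^+_u$ under the stated hypotheses. Once both are in hand, taking closures gives $\overline{\partial\mathcal{O}^s}\subseteq\overline{\Lambda^+_u}=\Lambda^+_u$. The unstable case is symmetric, replacing $\Lambda^+_u$ by $\Lambda^-_u$ and stable by unstable.

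\emph{Closedness.} I would verify this directly from Calegari's construction. The lamination $\Lambda^+_u$ is defined as the closure of the union of the boundaries of the core/gap regions, namely the images under the $\phi_\lambda$-preimages of $\fr\Omega_\lambda$ and its translates. So it is a lamination of $S^1_u$, and by definition a closed subset of the space of unordered pairs.

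\emph{Containment.} There are two cases for the leaf space of $\F^s_\psi$.
\begin{enumerate}
\item If it is Hausdorff, Theorem \ref{HausdorffThm} gives $\Lambda^+_u=\partial\mathcal{O}^s$ directly. In particular $\partial\mathcal{O}^s$ is already closed, so the corollary holds with equality.
\item If it is non-Hausdorff with only one-sided branching, the proof of Theorem \ref{OneSided} supplies the inclusion $\partial\mathcal{O}^s\subseteq\Lambda^+_u$. The argument runs as follows. Start from a stable leaf slice in $\fr\Omega_\lambda$, whose endpoint pair lies in $\Lambda^+_u$. Use minimality of $\partial\mathcal{O}$ and the density of the $\pi_1(M)$-orbit of that stable leaf in $\mathcal{O}$ to approximate any stable leaf's endpoint pair by translates. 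Then use closedness of $\Lambda^+_u$ to pass to limits.
\end{enumerate}

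The main obstacle is this limiting step in the non-Hausdorff case. A sequence of stable leaves converging to a union of branching leaves has endpoint pairs whose limit may not be the endpoint pair of any single leaf of $\bar\F^s_\psi$. We need each individual branching leaf's pair to be captured as well. Here the one-sided branching hypothesis is used: every branching leaf is approached from its non-branching side by a sequence of nearby leaves. Their endpoint pairs then converge to exactly that leaf's pair, and translates of the slice leaf can be chosen along such a sequence.

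Finally, I would note that $\overline{\partial\mathcal{O}^s}$ additionally contains limit pairs joining ideal points of adjacent branching leaves. These lie in $\Lambda^+_u$ automatically by closedness, so no separate argument is needed for them.
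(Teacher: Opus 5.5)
Your proposal is correct and follows the paper's argument: the inclusion $\partial\mathcal{O}^s\subseteq\Lambda^+_u$ from Theorem \ref{OneSided}, combined with closedness of $\Lambda^+_u$, gives $\overline{\partial\mathcal{O}^s}\subseteq\Lambda^+_u$. Your separate treatment of the Hausdorff case via Theorem \ref{HausdorffThm} also fills a small hole in the paper's one-line proof, since the corollary allows that case but Theorem \ref{OneSided} assumes non-Hausdorff.

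One slip in your recap: Corollary \ref{orbit2calegari} only puts $\End(C)$ of an entire stable frontier chain $C$ into $\Lambda^+_u$, not the pair of an individual slice. This is why the paper first uses Lemma \ref{noncornerleaf} to obtain a regular leaf $l$ with $\End(l)\in\Lambda^+_u$, and only then appeals to density of $\pi_1(M)\cdot l$.
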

In general, one can not promote the conclusion of Corollary \ref{OneSidedCor} to an equality due to the presence of \textit{diagonal leaves} (see Section \ref{sec:Reconstruction}). The result of Corollary \ref{OneSidedCor} is sharp. In Section \ref{sec:Example}, we construct a pseudo-Anosov flow which is transverse to a finite depth foliation such that $\overline{\partial\mathcal{O}^s}\subsetneq \Lambda^+_u$. The existence of such examples demonstrates that $\Lambda^\pm_u$ are in general not minimal laminations (i.e. there exists a proper sublamination in $\Lambda^\pm_u$).  

Finally, the next theorem addresses the case where $\psi$ has a stable or unstable foliation which contains leaves with branching on both sides.
\begin{thm}\label{TwoSided}
    Let $\psi$ be a pseudo-Anosov flow on $M$ such that $\F^s_\psi$ ($\F^u_\psi$, respectively) is non-Hausdorff and contains a leaf branching on both sides, and let $\F$ be a foliation almost transverse to $\psi$. Suppose that $(\partial\mathcal{O},\phi_\lambda)$ is a minimal universal circle. If there exists $\lambda\in L$ such that $\fr\Omega_\lambda$ contains a stable leaf slice (unstable leaf slice, respectively), then all but finitely many leaves of $\partial\mathcal{O}^{s}$ ($\partial\mathcal{O}^{u}$, respectively) are contained in $\Lambda^+_u$ ($\Lambda^-_u$, respectively), up to the action of $\pi_1(M)$.
\end{thm}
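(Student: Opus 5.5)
We work with the stable case; the unstable case is symmetric. The strategy parallels the one used for Theorems \ref{HausdorffThm} and \ref{OneSided}. There, the hypothesis that $\fr\Omega_\lambda$ contains a stable leaf slice is propagated, via minimality of $\partial\mathcal{O}$ and the $\pi_1(M)$-action, to show that every pair of endpoints of a stable leaf is a leaf of $\Lambda^+_u$. We will establish three things in turn: membership in $\Lambda^+_u$ propagates along the leaf space of $\bar\F^s_\psi$ except across leaves branching on both sides; there are only finitely many orbits of such leaves; and the exceptional leaves of $\partial\mathcal{O}^s$ are confined to those finitely many orbits.

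\textbf{Step 1 (seed leaf).} Starting from the leaf slice in $\fr\Omega_\lambda$, we use the definition of $\Lambda^+_u$ as the closure of boundaries of the sets $\Omega_\lambda$ (suitably translated into $\partial\mathcal{O}$ through the maps $\phi_\lambda$), as in the earlier theorems. This shows that the endpoint pair of some stable leaf $l_0$ lies in $\Lambda^+_u$. Minimality of the universal circle together with density of $\pi_1(M)$-orbits of leaves then shows that every stable leaf $l$ in the same \emph{Hausdorff component} as $l_0$ also lies in $\Lambda^+_u$. Here a Hausdorff component means a maximal region of the leaf space in which leaves are separated from $l_0$ only by leaves that are non-branching or branch on one side. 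The point is that passage across a leaf branching on one side was already handled in the proof of Theorem \ref{OneSided}: the non-separated leaves on the branching side are approximated by leaves in $\Lambda^+_u$ from the non-branching side, and closedness of $\Lambda^+_u$ carries membership through.

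\textbf{Step 2 (finiteness).} By Fenley's finiteness results for branching in pseudo-Anosov flows (available through \cite{Fenley2003} and used in \cite{BarthelmeBonattiMann2025}), there are only finitely many $\pi_1(M)$-orbits of non-separated leaves. These correspond to finitely many periodic orbits, namely the orbits lying on the boundaries of the scalloped or lozenge chains realising the branching. In particular, there are only finitely many orbits of leaves, or faces, branching on both sides.

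\textbf{Step 3 (the exceptional leaves).} At a leaf $l$ branching on both sides, the argument from Theorem \ref{OneSided} only guarantees membership from one side at a time. Suppose $l$ is approached by the leaves of $\Lambda^+_u$ from one side. The limits then give the endpoints of the leaves non-separated from $l$ on that side, and possibly diagonal leaves in the sense of Section \ref{sec:Reconstruction}, rather than the endpoint pair of $l$ itself. So $l$, and possibly its non-separated partners, may fail to lie in $\Lambda^+_u$. We will show that every other leaf of $\partial\mathcal{O}^s$ is still reached. Every stable leaf can be joined to $l_0$ by a path in the leaf space. By Step 1, membership propagates across the branching leaves this path crosses, including leaves branching on both sides, by entering through the side where the approximating leaves land on the adjacent non-separated leaf rather than on $l$. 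Hence the only possible failures are the leaves branching on both sides themselves. By Step 2 these form finitely many orbits. Since $\Lambda^+_u$ is $\pi_1(M)$-invariant, all but finitely many leaves of $\partial\mathcal{O}^s$ lie in $\Lambda^+_u$ up to the action of $\pi_1(M)$.

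The main obstacle is Step 3. We must show that propagation genuinely bypasses a leaf branching on both sides, that is, that leaves on the far side are still accumulated by leaves already known to be in $\Lambda^+_u$, and we must use the fact that $\Lambda^+_u$ is a lamination (its leaves are unlinked) to rule out that the failure spreads. To do this, we would first use the structure of the lozenge chains at such a leaf, in particular the periodic orbits at the corners. We would also use the contracting and expanding dynamics of the associated deck transformation on $\partial\mathcal{O}$ to push known leaves of $\Lambda^+_u$ toward the leaves on the far side.
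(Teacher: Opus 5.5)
Your Steps 1 and 2 and your final count agree with the paper. As written, though, the proposal does not close. You only claim membership in $\Lambda^+_u$ for leaves in the ``Hausdorff component'' of $l_0$, and you leave the passage across leaves branching on both sides (Step 3) as an unproved ``main obstacle'' with a sketched plan via lozenge chains, corner dynamics and unlinkedness.

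The missing idea is that no propagation along paths in the leaf space is needed. Because $\psi$ is transitive, $\pi_1(M)\cdot l_0$ is dense in \emph{all} of $\mathcal{O}$, not just in a region cut out by two-sided branching leaves. Take any face $f$ of a stable leaf with no branching on at least one side. Choose translates $g_n l_0$ through points converging to a point of $f$ from the non-branching side. These converge in the Hausdorff topology on $\bar{\mathcal{O}}$ to $f$ alone, so $\End(g_n l_0)\to\End(f)$. Then $\pi_1(M)$-invariance and closedness of $\Lambda^+_u$ give $\End(f)\in\Lambda^+_u$. This is Lemma \ref{regularleafdyn}, and it does not care where $f$ sits relative to leaves branching on both sides. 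The only faces not covered are those of leaves branching on both sides, which form finitely many $\pi_1(M)$-orbits by Theorem \ref{Fenleybranching}. That is the whole proof, and your Step 3 disappears.

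The local mechanism you propose for Step 3 would also not work as described. Leaves approaching a branching leaf from its branching side have endpoints converging to the outer endpoints of the entire line of leaves non-separated from it on that side. So they never isolate an individual adjacent leaf, and they do not produce diagonal leaves.

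A secondary point concerns the seed in Step 1. Corollary \ref{orbit2calegari} only gives $\End(C)\in\Lambda^+_u$ for a stable frontier chain $C$. In the non-Hausdorff setting of this theorem, $C$ may consist of several non-separated leaves, so $\End(C)$ need not be the endpoint pair of any leaf. The paper obtains a regular seed via Lemma \ref{noncornerleaf}:
\begin{enumerate}
\item pick a noncorner periodic point in $\Omega_\lambda$ on a regular stable leaf $l$ whose unstable leaf crosses $C$;
\item iterate an element $g$ of its stabilizer, so that $\End(g^n\cdot C)\to\End(l)$.
\end{enumerate}
The density argument above is then run with this regular leaf $l$ as $l_0$.
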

With these results at hand, we prove that one can recover all possible laminations for $\Lambda^\pm_u$ from the action of $\pi_1(M)$ on $\partial\mathcal{O}$ alone.
\begin{thm}\label{reconstruct}
    Let $\F$ be a taut foliation on $M$ and $\rho:\pi_1(M)\to\Homeo^+(S^1)$ be an action coming from the orbit space of a pseudo-Anosov flow. Then there exists a finite set $S$ of lamination pairs such that for any minimal orbit space universal circle $S^1_u$ for $\F$ with an action conjugate to $\rho$, the associated $(\Lambda^+_u,\Lambda^-_u)$ lies in $S$.
\end{thm}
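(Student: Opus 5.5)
The plan is to combine the structural results above with the rigidity theorem of Barthelmé, Bonatti, and Mann. Their theorem says that an action $\rho:\pi_1(M)\to\Homeo^+(S^1)$ coming from the boundary of a bifoliated plane determines the bifoliated plane up to finitely many possibilities, and in fact up to the choice of which of the two invariant "laminations" is called stable. More precisely, the action on $\partial\mathcal{O}$ determines the pair of prelaminations $(\partial\mathcal{O}^s,\partial\mathcal{O}^u)$ up to a finite ambiguity. That ambiguity comes from swapping stable and unstable and from the finitely many ways of resolving non-Hausdorff branching data, i.e. which collections of leaves are branching, equivalently which diagonal leaves appear.

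So the first step is to fix $\rho$. Applying this rigidity, we get that for any orbit space universal circle whose action is conjugate to $\rho$, after conjugating, the pair $(\partial\mathcal{O}^s,\partial\mathcal{O}^u)$ lies in a finite set $P$ of prelamination pairs of $S^1$. Since $\Lambda^\pm_u$ is defined intrinsically from the universal circle and transforms naturally under conjugacy, it suffices to fix one such pair $(\partial\mathcal{O}^s,\partial\mathcal{O}^u)\in P$. We then need to show that only finitely many pairs $(\Lambda^+_u,\Lambda^-_u)$ are compatible with it.

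Next we use the hypothesis that some $\fr\Omega_\lambda$ contains a stable, respectively unstable, leaf slice. I expect to argue this always holds for some choice of labeling: either frontiers of $\Omega_\lambda$ meet stable slices or they meet unstable ones, and with the roles of $\Lambda^\pm_u$ swapped, the theorems above apply symmetrically. This yields a finite case split over labelings. In each case we apply, according to the branching type of the leaf space:
\begin{itemize}
\item Theorem~\ref{HausdorffThm} (Hausdorff case), which gives $\Lambda^+_u=\partial\mathcal{O}^s$ outright;
\item Theorem~\ref{OneSided} and Corollary~\ref{OneSidedCor} (one-sided branching case), which give $\overline{\partial\mathcal{O}^s}\subseteq\Lambda^+_u$;
\item Theorem~\ref{TwoSided} (two-sided branching case), which gives that all but finitely many $\pi_1(M)$-orbits of leaves of $\partial\mathcal{O}^s$ lie in $\Lambda^+_u$.
\end{itemize}

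The remaining task is to show that, given these containments, $\Lambda^+_u$ is one of finitely many laminations. The idea is that $\Lambda^+_u$ is a $\pi_1(M)$-invariant lamination, so it cannot cross leaves of $\overline{\partial\mathcal{O}^s}$ that it contains. The complementary regions of $\overline{\partial\mathcal{O}^s}$ are then ideal polygons, or gaps associated to branching, and come in finitely many $\pi_1(M)$-orbits. This holds because there are finitely many singular orbits and finitely many branching leaf orbits, by Fenley's finiteness of periodic non-separated leaves. Any additional leaves of $\Lambda^+_u$ are diagonals inside these gaps, and an invariant lamination is determined by an equivariant choice of non-crossing diagonals in each orbit representative of gaps. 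In the two-sided case we must also decide, for each of the finitely many excluded orbits of leaves, whether it lies in $\Lambda^+_u$. Each gap has finitely many vertices, since the branching gaps correspond to finite chains of non-separated leaves. So each choice is finite, and the total count is finite.

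The main obstacle is this last step: justifying that the gaps of $\overline{\partial\mathcal{O}^s}$ are finite-sided and have finitely many orbits. In the non-Hausdorff case, the union of ideal points of a branching family could a priori be infinite, since a leaf may be non-separated from infinitely many others. Here I would invoke Fenley's result that non-separated leaves come in finite collections that are periodic, together with the description of $\partial\mathcal{O}$ from \cite{Bonatti2024}.
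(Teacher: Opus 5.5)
Your first half matches the paper: recover $\partial\mathcal{O}^s$ and $\partial\mathcal{O}^u$ from $\rho$ via \cite[Theorem B]{BarthelmeBonattiMann2025}, then apply Theorems \ref{HausdorffThm}, \ref{OneSided} and \ref{TwoSided}. The gap is in the finiteness step, which you flag as the main obstacle and leave unresolved. You treat $\Lambda^+_u$ abstractly as an invariant lamination containing (most of) $\overline{\partial\mathcal{O}^s}$ and then count diagonals of complementary regions. That requires a complete classification of those regions, and in the two-sided case of the regions of the coarser sublamination obtained by discarding finitely many orbits of faces. You would have to show that taking the closure adds nothing beyond the pairs $\End(C)$ for cataclysms $C$, and that no infinite-sided or otherwise exotic regions occur. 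Finiteness of orbits of singular and branching leaves does not by itself give this.

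The missing idea is to use Corollary \ref{orbit2calegari} directly. Every leaf of every $\Lambda^+(\lambda)$ is $\End$ of a stable frontier chain, i.e.\ a finite sequence of stable faces each sharing an ideal endpoint with the next. So such a leaf is either a face (a leaf of $\partial\mathcal{O}^s$) or the endpoints of a sub-chain of a single cataclysm. Hence, as the paper argues, the only leaves of $\Lambda^+_u$ not in $\overline{\partial\mathcal{O}^s}$ are diagonal leaves of the polygons $P_C$. All you then need is that each $P_C$ is finite-sided and that there are finitely many orbits of cataclysms, which is exactly Theorem \ref{Fenleybranching}; equivariance makes the choice finite per orbit. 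The finitely many orbits of two-sided faces left undecided by Theorem \ref{TwoSided} are sides of these same polygons, so they fall into the same finite choice.

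Two smaller points.

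First, relabeling cannot supply the hypothesis that some $\fr\Omega_\lambda$ contains a stable slice. The pairing of $\Lambda^+_u$ with stable slices is forced by co-orienting $\F$ with $\varphi$: reversing the flow swaps stable/unstable and $\pm$ simultaneously. The hypothesis can genuinely fail, e.g.\ when $\psi$ is regulating, so every frontier is empty. No relabeling is needed, though. If no $\fr\Omega_\lambda$ contains a stable slice, then $\fr\Omega^s_\lambda=\emptyset$ for all $\lambda$, and Corollary \ref{orbit2calegari} gives $\Lambda^+_u=\emptyset$, which is just one more element of $S$. The paper applies the three main theorems without checking this hypothesis, so you were right to notice it.

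Second, your description of the rigidity theorem is off. The paper uses it to reconstruct the bifoliation, hence $\partial\mathcal{O}^s$ and $\partial\mathcal{O}^u$, from $\rho$. Diagonal leaves are not bifoliation data at all. They depend on $\F$, through which sub-chains of cataclysms occur as frontier chains, and they are precisely the residual ambiguity that $S$ has to absorb. Your argument survives because you count diagonals separately, but the attribution should be corrected.
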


\subsection*{Outline} The organization of this paper is as follows. In Section \ref{sec:Preliminaries}, we recall the definition of universal circle and the construction of the laminations $\Lambda^\pm_u$. We then summarize basic results on pseudo-Anosov flows and their orbit spaces, dynamic blow-ups of pseudo-Anosov flows and the shadow of transverse leaves, and recent results of Landry, Minsky, and Taylor \cite{LandryMinskyTaylor2024} on orbit space universal circles. In Section \ref{sec:Main}, we prove Theorems \ref{HausdorffThm}, \ref{OneSided}, and \ref{TwoSided}, as well as Corollary \ref{OneSidedCor}, and we prove Theorem \ref{reconstruct} in Section \ref{sec:Reconstruction}. Then, in Section \ref{sec:Example}, we construct an example of a pseudo-Anosov flow transverse to a finite depth foliation with the property that $\overline{\partial\mathcal{O}^s}\subsetneq \Lambda^+_u$. 

\subsection*{Acknowledgments} The author would like to express their gratitude to Kathryn Mann for her guidance and support throughout the course of this research. The author is also grateful to Michael Landry, S\'ergio Fenley, Sam Taylor, and Chi Cheuk Tsang for insightful discussions and helpful suggestions. Finally, the author would like to thank Junzhi Huang and Chaitanya Tappu for comments on a preliminary version of this paper.

\section{Preliminaries} \label{sec:Preliminaries}

\subsection{Taut Foliations, Universal Circles, and Laminations}
In this section we introduce the necessary machinery and results from the theory of taut foliations needed to give the definition of a universal circle and outline the construction of the laminations $\Lambda^\pm_u$. 

Let $\F$ be a \textit{taut foliation} of a closed, oriented, irreducible three-manifold $M$. That is, $\F$ is a codimension-one foliation of $M$ and there exists a closed loop intersecting every leaf, transverse to $\F$.  A well-known theorem of Novikov \cite{Novikov1965} implies that every leaf of $\F$ is incompressible and every closed transversal to $\F$ is essential in $\pi_1(M)$. Hence, the foliation lifted to the universal cover $\widetilde{M}$, denoted by $\tilde{\F}$, is a foliation by planes. 

The space obtained by collapsing each leaf of $\tilde{\F}$ (i.e. the space $\widetilde{M}/\sim_{\tilde{\F}}$ where $p\sim_{\tilde{\F}} q$ if and only if $p,q\in \lambda$, for some $\lambda\in\tilde{\F}$) is called the \textit{leaf space} of $\F$, denoted $L$. It follows from Novikov's theorem that $L$ is a connected, simply connected one-manifold, which is possibly non-Hausdorff. $\F$ is said to be \textit{$\R$-covered} when $L$ is Hausdorff. The leaves corresponding to Hausdorff points of $L$ are called \textit{separated leaves} and the non-Hausdorff points of $L$ are called \textit{nonseparated} or \textit{branching leaves}. A \textit{cataclysm} in $L$ is a maximal set of branching leaves such that all leaves in this set are nonseparated with each other.

In this paper, we consider taut foliations $\F$ which are \textit{co-oriented}, meaning there is a consistent choice of transverse orientation for each leaf. If $\F$ is co-oriented, $L$ inherits an orientation and a partial order $<$, where $\lambda<\mu$ if there exists a positively oriented interval in $L$ from $\lambda$ to $\mu$. Two leaves $\lambda,\lambda'\in L$ for which there does not exist an oriented embedded interval from $\lambda$ to $\lambda'$ and vice versa are said to be \textit{incomparable}. A co-oriented taut foliation $\F$ is said to have \textit{positive branching} (\textit{negative branching}, respectively), if there exists a pair of branching leaves $\lambda_i$ and a leaf $\mu$ such that $\mu < \lambda_i$, for $i=1,2$ ($\lambda_i<\mu$ for $i=1,2$, respectively). 

If we further assume that $M$ is atoroidal, then a theorem of Candel \cite{Candel1993} implies that there exists a \textit{leafwise hyperbolic} Riemannian metric $g$ on $M$---i.e. the pullback of $g$ on each leaf of $\F$ is a hyperbolic metric. Hence each leaf $\lambda \in \tilde{\F}$ is a hyperbolic plane and has an ideal boundary, $\partial_\infty\lambda$, homeomorphic to a circle. The union of the ideal boundaries of each leaf, $E_\infty \coloneqq  \bigsqcup_{\lambda\in L} \partial_\infty \lambda$, forms the total space of a circle bundle over $L$. We refer the reader to \cite{CalegariDunfield2003} for a discussion of the topology on $E_\infty$. Since $\pi_1(M)$ acts on $\widetilde{M}$ preserving $\widetilde{\F}$, there is an induced action of $\pi_1(M)$ on $E_\infty$ preserving fibers. The bundle $E_\infty$ will allow us to understand the global geometry of leaves in $\F$ and define universal circles. 

A universal circle is an object which collates the ideal boundaries of each leaf $\tilde{\F}$, thereby encoding the structure of $L$ and the action of $\pi_1(M)$ on $\widetilde{M}$. Fundamental to the definition of a universal circle are monotone maps of circles.  
\begin{defn}
   Suppose that $f:X\to Y$ is a continuous map between two oriented circles $X$ and $Y$. The map $f$ is a \textit{monotone map} of circles if $f$ has degree 1 and the preimage of every $y\in Y$ is contractible.
\end{defn}
A maximal connected open interval in $X$ whose image is a point is called a \textit{gap of $f$} and the complement of the union of all the gaps of $f$ is called the \textit{core of $f$}, denoted $\core(f)$. 

We now state the definition of a universal circle given in \cite{Calegari2006}.
\begin{defn}\label{UC}
    A \textit{universal circle} for $\F$ is an oriented topological circle $S^1_u$ equipped with a family of monotone maps $\left\{ \phi_\lambda\colon S^1_u\to \partial_\infty\lambda \middle\vert \lambda\in L \right\}$, together with an action $\rho:\pi_1(M)\to \Homeo^+(S_u^1)$ such that the following hold:
    \begin{enumerate}
        \item The map $\phi:L\times S^1_u\to E_\infty$ given by $\phi(\lambda,p):= \phi_\lambda(p)$ is continuous.
        \item For each $\lambda\in L$ and each $g\in \pi_1(M)$, the following diagram commutes:
        \[\begin{tikzcd}
    	{S^1_u} & {S^1_u} \\
    	{\partial_\infty\lambda} & {\partial_\infty (g\cdot \lambda)}
    	\arrow["{\rho(g)}", from=1-1, to=1-2]
    	\arrow["{\phi_\lambda}"', from=1-1, to=2-1]
    	\arrow["{\phi_{g\cdot\lambda}}", from=1-2, to=2-2]
    	\arrow["g"', from=2-1, to=2-2]
        \end{tikzcd}\]
    \item For any two incomparable leaves $\lambda, \lambda'\in L$, $\core(\phi_\lambda)$ is contained in a single gap of $\phi_{\lambda'}$, and vice versa.
    \end{enumerate}
\end{defn}
Calegari--Dunfield \cite{CalegariDunfield2003} proved that every taut foliation on a closed, oriented, atoroidal three-manifold has a universal circle. A taut foliation may have many distinct universal circles up to conjugacy (see \cite{BuckminsterTaylor2025} for an example of a family of foliations with distinct universal circles). In this paper we restrict our attention to universal circles which are minimal.
\begin{defn}
    A universal circle is said to be \textit{minimal} given that for any $p,q \in S^1_u$ distinct there exists a $\lambda\in L$ such that $\phi_\lambda(p)\neq \phi_\lambda(q)$. 
\end{defn}

Calegari constructed two $\pi_1(M)$-invariant laminations of a minimal universal circle, $\Lambda^+_u$ and $\Lambda^-_u$, which encode the data of the monotone maps. 
\begin{defn}
    A \textit{lamination} of $S^1$ is a closed subset of the space of unordered pairs of distinct points in $S^1$ such that any two elements are \textit{unlinked}---that is, for any two elements $\set{a_1,a_2}, \set{b_1,b_2}$, $b_1$ and $b_2$ lie in the same component of $S^1\setminus\set{a_1,a_2}$. 
\end{defn}
In the subsequent sections, we will work with a generalization of a lamination, called a \textit{prelamination}, which is a set of unlinked pairs of distinct points (i.e. the set is not necessarily closed). Any prelamination can be promoted to a lamination by taking the closure of the prelamination in the space of unordered pairs of distinct points in $S^1$.

We now outline Calegari's construction of $\Lambda^\pm_u$. For any $\lambda \in L$, let $L^\pm(\lambda)$ denote the connected component of $L\setminus\set{\lambda}$ which consist of all leaves on the $\pm$-side of $\lambda$. For any subset $Z\subseteq L$, define $$\core(Z)\coloneqq \overline{\bigcup_{\lambda\in Z}\core(\phi_\lambda)} \subseteq S^1_u.$$ A lamination of $S^1_u$ is obtained by identifying $S^1_u$ with $\partial \Hyp^2$. Using this identification, $\core(Z)$ can be given a convex hull in $\Hyp^2$, denoted $CH(\core(Z))\subseteq\Hyp^2$. The endpoints of the boundary geodesics of $CH(\core(Z))$ give a collection of unordered pairs of distinct points in $S^1_u$, denoted by $\Lambda(Z)$. Doing so using $\core(L^\pm(\lambda))$ yields a pair of laminations associated to each $\lambda$: $$\Lambda^\pm(\lambda) : = \Lambda(\core(L^\pm(\lambda))).$$ Doing this for each leaf yields the laminations $\Lambda^\pm_u$ of $S^1_u$: $$\Lambda^\pm_u := \overline{\bigcup_{\lambda\in L}\Lambda^\pm(\lambda)}.$$
Calegari \cite{Calegari2006} proved that $\Lambda^\pm_u$ form a pair of $\pi_1(M)$-invariant laminations of $S^1_u$.
\begin{thm}[{\cite[Theorem 5.2.3]{Calegari2006}}]\label{CalegariLamination}
Let $M$ be an atoroidal three-manifold with a taut foliation $\F$. If $S^1_u$ is a minimal universal circle for $\F$, then $\Lambda^\pm_u$ are $\pi_1(M)$-invariant laminations of $S^1_u$. Furthermore, if $\F$ has positive branching (negative branching, respectively), then $\Lambda^+_u$ is nonempty ($\Lambda^-_u$, respectively).
\end{thm}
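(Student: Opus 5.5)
We need to prove Calegari's theorem: that $\Lambda^\pm_u$ are $\pi_1(M)$-invariant laminations when $S^1_u$ is minimal, and that positive branching makes $\Lambda^+_u$ nonempty. The plan has three steps, taken in order: invariance, unlinkedness, then nonemptiness.

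\textbf{Invariance.} This is formal. Condition (2) of Definition \ref{UC} gives $\rho(g)(\core(\phi_\lambda)) = \core(\phi_{g\lambda})$. Since $\pi_1(M)$ preserves the co-orientation (which we assume), $g$ maps $L^\pm(\lambda)$ onto $L^\pm(g\lambda)$. Hence
\[\rho(g)\,\core(L^\pm(\lambda)) = \core(L^\pm(g\lambda)).\]
Boundaries of convex hulls are natural under homeomorphisms of the circle, in the sense that the pairs of endpoints of boundary geodesics are carried to the corresponding pairs. So $\rho(g)\Lambda^\pm(\lambda) = \Lambda^\pm(g\lambda)$. Taking the union over $\lambda$ and then the closure preserves invariance.

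\textbf{Unlinkedness.} Each $\Lambda^\pm(\lambda)$ is unlinked, since it consists of boundary leaves of a single convex set. The real issue is comparing $\Lambda^+(\lambda)$ with $\Lambda^+(\mu)$. The closure step is harmless, because the set of linked pairs of pairs is open, so the closure of an unlinked set is unlinked. For two leaves $\lambda,\mu$ I split into cases.

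If $\mu\in L^+(\lambda)$, then $L^+(\mu)\subseteq L^+(\lambda)$, so $\core(L^+(\mu))\subseteq\core(L^+(\lambda))$. I then need to show every boundary geodesic of the smaller hull is unlinked with every boundary geodesic of the larger hull. Nested convex sets do not give this automatically. The extra input is that the complementary leaves $L^+(\lambda)\setminus L^+(\mu)$ have cores lying in a single complementary region of $CH(\core(L^+(\mu)))$, or else on its boundary. I would derive this from condition (3): any leaf incomparable to $\mu$ has core inside one gap of $\phi_\mu$. I would also use the comparability and continuity structure along the interval $[\lambda,\mu]$, via condition (1), to control the leaves between $\lambda$ and $\mu$.

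If $\lambda$ and $\mu$ are incomparable, then (in the generic branching configuration) $L^+(\lambda)$ and $L^+(\mu)$ consist of mutually incomparable leaves. Condition (3) then places $\core(L^+(\lambda))$ in the closure of a single gap of every $\phi_\nu$ with $\nu\in L^+(\mu)$. From this one shows the two cores are unlinked as subsets of the circle, which forces their hull boundaries to be unlinked. The mixed configurations, such as $\mu<\lambda$ or branching in the negative direction, reduce to one of these two cases by the same argument.

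\textbf{Nonemptiness, and where minimality enters.} Suppose there is positive branching: $\mu<\lambda_1,\lambda_2$ with $\lambda_1,\lambda_2$ nonseparated. Then $\core(L^+(\mu))$ contains both $\core(\phi_{\lambda_1})$ and $\core(\phi_{\lambda_2})$, and by condition (3) these lie in disjoint gaps of each other. To get a boundary geodesic I must show $\core(L^+(\mu))\neq S^1_u$. Here minimality is used: otherwise the cores of the leaves above $\mu$ would fill the circle. Consider the leaves below $\mu$ and incomparable branches. If $\core(L^+(\mu))$ were everything, I would argue these leaves' cores lie in gaps of leaves above $\mu$, and via continuity of $\phi$ obtain two distinct points identified by every $\phi_\lambda$, contradicting minimality. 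A cleaner route is to find a leaf $\nu$ incomparable to everything in some $L^+$-component, such as a further branch. Condition (3) then puts $\core(L^+(\lambda_1))$ inside one gap of $\phi_\nu$, so $\core(L^+(\lambda_1))$ is a proper closed subset. It is nonempty because cores of monotone maps are nonempty, so its hull has boundary and $\Lambda^+(\lambda_1)\neq\emptyset$.

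\textbf{Main obstacle.} I expect the nested case of unlinkedness to be the hardest step. Controlling how the cores of the intermediate leaves sit relative to the hull of a sub-collection requires careful use of continuity (condition (1)) across non-Hausdorff points of $L$. I would first try to prove a lemma that, for $\mu>\lambda$, $\core(L^+(\lambda))\setminus\core(L^+(\mu))$ lies in the closure of one complementary interval of $\core(L^+(\mu))$.
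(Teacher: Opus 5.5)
The paper does not prove this statement; it quotes it from Calegari. So I am judging your outline on its own terms. Invariance is fine, given that $\pi_1(M)$ preserves the co-orientation. The real gap is in the case analysis for unlinkedness. The right split is by the four configurations determined by whether $\mu\in L^{+}(\lambda)$ or $\mu\in L^-(\lambda)$, and whether $\lambda\in L^+(\mu)$ or $\lambda\in L^-(\mu)$.

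Your step ``if $\mu\in L^+(\lambda)$ then $L^+(\mu)\subseteq L^+(\lambda)$'' is false when also $\lambda\in L^+(\mu)$. This happens, for instance, when $\lambda$ and $\mu$ lie below a common leaf (negative branching). In that configuration $L^-(\lambda)\cup\{\lambda\}\subseteq L^+(\mu)$ and $L^-(\mu)\cup\{\mu\}\subseteq L^+(\lambda)$. So $L^+(\lambda)$ and $L^+(\mu)$ are neither nested nor disjoint, and they share comparable leaves. This case does not reduce to either of your two cases, and it is exactly where minimality is needed.

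Here is the missing argument. Minimality gives $\core(L)=S^1_u$, since an open arc missing every core would be collapsed by every $\phi_\nu$. Together with lower semicontinuity this gives $\core(L^+(\lambda))\cup\core(L^-(\lambda))=S^1_u$. Hence every complementary interval of $\core(L^+(\lambda))$ lies in $\core(L^-(\lambda))\subseteq\core(L^+(\mu))$. It is therefore disjoint from every complementary interval of $\core(L^+(\mu))$, and disjoint open arcs have unlinked endpoint pairs.

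You instead spend minimality on nonemptiness, where it is not needed. Your ``cleaner route'', comparing $\nu=\lambda_2$ against $L^+(\lambda_1)$, works without it. Your first route, proving $\core(L^+(\mu))\neq S^1_u$ for the leaf below the branch point, is both unnecessary and unsupported.

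Two further corrections.

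First, the nested case you single out as the main obstacle is automatic. If $X\subseteq Y$ are closed, a boundary leaf of $CH(Y)$ bounds a complementary interval of $Y$, which contains no point of $X$. So no pair of points of $X$ can link with it. Your proposed lemma for this case is not only unnecessary but false in general. In the orbit space setting, Proposition \ref{core+-} gives $\core(L^+(\lambda))\setminus\core(L^+(\mu))=I^s_\mu\setminus I^s_\lambda$ for $\lambda<\mu$, and this can meet several arcs of $I^s_\mu$.

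Second, in the genuinely incomparable case $L^+(\lambda)\cap L^+(\mu)=\emptyset$, condition (3) is only pairwise. Pairwise-separated families of closed sets can still interleave around the circle. What closes this case is lower semicontinuity of cores (which comes from condition (1)) together with connectedness:
\begin{itemize}
\item Fix $\nu\in L^+(\lambda)$. The gap of $\phi_\nu$ containing $\core(\phi_{\nu'})$ is locally constant in $\nu'\in L^+(\mu)$, hence constant. So $\core(L^+(\mu))$ lies in the closure of a single gap $H_\nu$ of $\phi_\nu$.
\item It follows that $\core(\phi_\nu)$ lies in the closure of a single complementary interval $K_\nu$ of $\core(L^+(\mu))$.
\item The same local-constancy argument, now in $\nu\in L^+(\lambda)$, shows $K_\nu$ does not depend on $\nu$.
\end{itemize}
Once $\core(L^+(\lambda))$ lies in the closure of one complementary interval of $\core(L^+(\mu))$, the boundary leaves are unlinked. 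This connectedness argument is also what justifies ``inside one gap of $\phi_\nu$'' in your nonemptiness step.
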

\subsection{Pseudo-Anosov Flows and their Orbit Spaces}\label{subsec: pAprelim}
We now outline some pre-requisites from the theory of pseudo-Anosov flows. In this section, we introduce some of the basic properties of pseudo-Anosov flows and their orbit spaces. For a rigorous treatment of the theory of pseudo-Anosov flows we refer the reader to \cite{BarthelmeMann2026} for a detailed exposition.

Roughly speaking, a pseudo-Anosov flow $\psi$ is a flow on a closed three-manifold which is Anosov away from a finite collection of periodic orbits, called singular orbits, and each neighborhood of a singular orbit is modeled on a semi-branched cover of degree $\frac{p}{2}$ of a periodic orbit of an Anosov flow, for $p\geq 3$. See Figure \ref{fig:dynamicblowup} for a local picture of a singular orbit. As such, $\psi$ comes equipped with a pair of transverse singular foliations, $\F^s_\psi$ and $\F^u_\psi$, called the \textit{stable} and \textit{unstable} foliations, respectively. Each leaf of $\F^s_\psi$ and $\F^u_\psi$ is saturated by orbits, and orbits in the same stable leaf are asymptotic forwards in time, while orbits in the same unstable leaf are asymptotic backwards in time.

If $\psi$ is Anosov, then the foliations $\F^s_\psi$ and $\F^u_\psi$ are taut foliations. Hence,  $\F^s_\psi$ and $\F^u_\psi$ have leaf spaces which are simply connected one-manifolds. If $\psi$ has singular orbits, the leaf spaces of $\F^s_\psi$ and $\F^u_\psi$ are no longer one-manifolds, but are real trees or possibly non-Hausdorff trees (see \cite{GabaiOertel1989},\cite{GabaiKazez1997}, and \cite{Fenley2003}). 

Given a pseudo-Anosov flow, a \textit{dynamic blow-up} is an operation performed on a collection of singular orbits, where each singular orbit in the collection is replaced with an annulus. The annuli introduced in this procedure are called \textit{blow-up annuli}. Each blow-up annulus is invariant under the resulting flow and the two boundary components of each annulus form closed orbits, one of which is attracting and the other repelling. See Figure \ref{fig:dynamicblowup}. Performing dynamic blow-ups on a collection of singular orbits results in an \textit{almost pseudo-Anosov} flow, $\varphi$. We denote the stable and unstable singular foliations of $\varphi$ by $\F^s$ and $\F^u$, where blow-up annuli are contained in both $\F^s$ and $\F^u$. For a precise treatment of dynamic blow-ups see {\cite[Section 3]{LandryMinskyTaylor2026}}. In Section \ref{a-transverse}, dynamic blow-ups will be used to make $\psi$ transverse to $\F$.

Let $\tilde{\varphi},\tilde{\F}^s,\tilde{\F}^u$ denote the lifts of $\varphi$, $\F^s$, and $\F^u$ to the universal cover of $M$, $\widetilde{M}$. The space obtained by collapsing each orbit of $\tilde{\varphi}$ in $\widetilde{M}$ is called the orbit space of $\varphi$, denoted $\mathcal{O}$ and $\pi:\widetilde{M}\to \mathcal{O}$ is the projection map. Fenley--Mosher \cite{FenleyMosher2001} proved that $\mathcal{O}$ is a topological plane and the foliations $\tilde{\F}^s$ and $\tilde{\F}^u$ descend to the orbit space giving a pair of 1-dimensional singular foliations, which we denote by $\bar{\F}^s$ and $\bar{\F}^u$. The image of blow-up annuli are called \textit{blow-up segments}. The foliations $\bar{\F}^s$ and $\bar{\F}^u$ are transverse away from the blow-up segments. Moreover, the action of $\pi_1(M)$ on $\widetilde{M}$ descends to an action on $\mathcal{O}$ by homeomorphisms and preserves $\bar\F^s$ and $\bar\F^u$. 

In \cite{Fenley2012} and \cite{Bonatti2024} it was shown that there is a canonical compactification of $\mathcal{O}$ by a circle, $\partial \mathcal{O}$, into a closed disk, $\bar{\mathcal{O}}$, where each foliation ray limits to a single point in $\partial \mathcal{O}$. Moreover, the action of $\pi_1(M)$ on $\mathcal{O}$ induces an action of $\pi_1(M)$ on $\partial\mathcal{O}$ by homeomorphisms. 

A properly embedded $\R$ in a leaf $l \in \bar\F^{s}$ is called a \textit{leaf slice}. Thus, a nonsingular leaf contains only one leaf slice, while singular leaves contain distinct leaf slices. Furthermore, a leaf slice $f$ is said to be a \textit{face} of $l\in \bar\F^s$ (some authors call such a leaf slice \textit{regular}) if there is a connected component of $\mathcal{O}\setminus f$ containing exactly one unstable half leaf emanating out of $l$. Each leaf slice $l$ determines an unordered pair of points $\End(l)$ consisting of both endpoints of $l$ in $\partial\mathcal{O}$, which are necessarily distinct. Given a finite sequence of distinct leaf slices $l_1,\ldots,l_n$ such that consecutive leaf slices share an endpoint, $\End(\bigcup_{i=1}^n l_i)$ is defined to be the endpoints of $l_1$ and $l_n$ which are not in common with the other leaf slices in the sequence. Hence, $\bar\F^{s}$ defines a collection of unordered pairs of points in $\partial\mathcal{O}$: $$\displaystyle\partial\mathcal{O}^{s} := \bigcup_{\substack{f \subset l\in \bar\F^{s} \\ f \text{ a face}}} \End(f).$$ Leaf slices, faces of leaves, and $\partial\mathcal{O}^u$ are defined analogously using leaves of $\bar\F^u$. In general, $\partial \mathcal{O}^{s,u}$ are only a pair of prelaminations of $\partial\mathcal{O}$. Since $\bar\F^{s,u}$ are preserved by the action of $\pi_1(M)$, the prelaminations $\partial\mathcal{O}^{s,u}$ are $\pi_1(M)$-invariant. Finally, we note that by construction, the prelaminations $\partial\mathcal{O}^{s,u}$ do not depend on the choice of dynamic blow-up of $\psi$.

\subsection{Almost Transverse Flows and Orbit Space Universal Circles}\label{a-transverse}

The aim of this section is to discuss recent work of \cite{LandryMinskyTaylor2024} which shows that for a pseudo-Anosov flow $\psi$ which is almost transverse to a foliation $\F$, there exists a family of monotone maps making $\partial\mathcal{O}$ a universal circle---thereby, combining the machinery of the previous sections. Before doing so, we will first introduce almost transversality and then discuss result of \cite{Fen09} which characterizes the projection of leaves in $\F$ to $\mathcal{O}$.

Consider a foliation $\F$ and pseudo-Anosov flow $\psi$ on a closed, orientable, atoroidal three-manifold $M$. If there exists a dynamic blow-up of $\psi$, denoted $\varphi$, such that $\varphi$ is transverse to $\F$, then $\F$ and $\psi$ are said to be \textit{almost transverse}. We will take $\varphi$ to be the flow obtained by blowing up a minimal collection of singular orbits, with respect to inclusion.  

\begin{figure}[H]
    \centering
    \includegraphics[width= 3.5in]{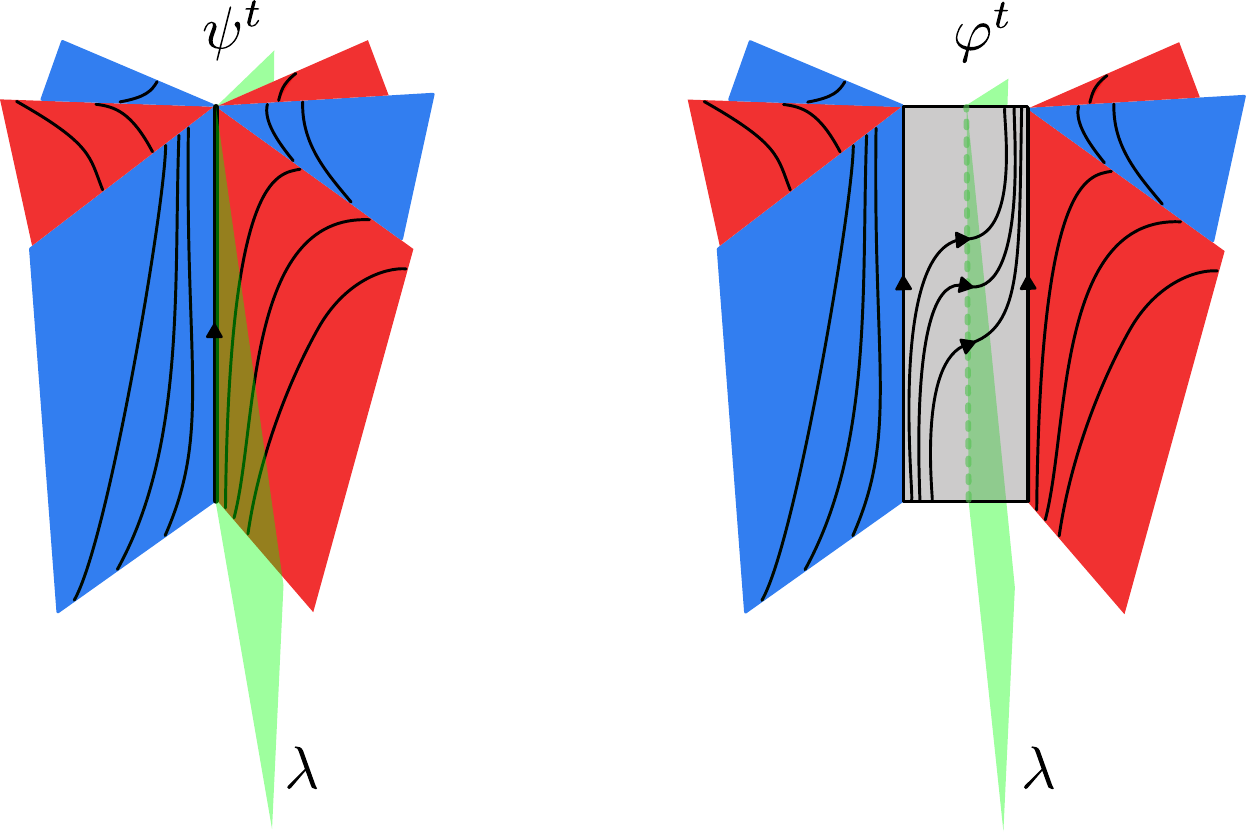}
    \caption{Left: The pseudo-Anosov flow $\psi$ fails to be transverse to $\lambda\in \F$ at a singular orbit. Right: After a dynamic blow-up of the singular orbit, the flow $\varphi$ is transverse to $\lambda\in\F$.}
    \label{fig:dynamicblowup}
\end{figure}

By work of Mosher {\cite[Proposition 2.7]{Mosher92}}, any pseudo-Anosov flow $\psi$ on an atoroidal $M$ is transitive (i.e. contains a dense orbit). Applying the pseudo-Anosov closing lemma {\cite[Proposition 1.4.4]{BarthelmeMann2026}} to the dense orbit of $\varphi$, gives that any $\F$ almost transverse to $\psi$ is taut. Moreover, the transversality of $\varphi$ to $\F$ implies $\F$ is co-orientable.

Given a foliation which is almost transverse to $\psi$, the projection of a leaf $\lambda\in\tilde{\F}$ to $\mathcal{O}$, which we denote by $\Omega_\lambda:= \pi(\lambda)$, is called the \textit{shadow} of $\lambda$. A leaf $\lambda\in\tilde{\F}$ accumulates on a face $\tilde{f}\subset\tilde{l}\in \tilde{\F}^s$ \textit{going up with the flow} (\textit{going down with the flow}, respectively) if for any small disk $D$ transverse to $\tilde\varphi$ and based at a point $x\in \tilde{f}$, and any sequence $\set{x_n}\subset D$ on the side of $\tilde{f}$ containing $\lambda$ such that $x_n\to x$, there exists a sequence $\set{t_n}\subset \R$ such that $\tilde\varphi^{t_n}(x_n)$ lies in $\lambda$ and $t_n\to \infty$ ($t_n\to -\infty$, respectively). See Figure \ref{fig:accumulation1}.

\begin{figure}[H]
    \centering
    \includegraphics[width= 4in]{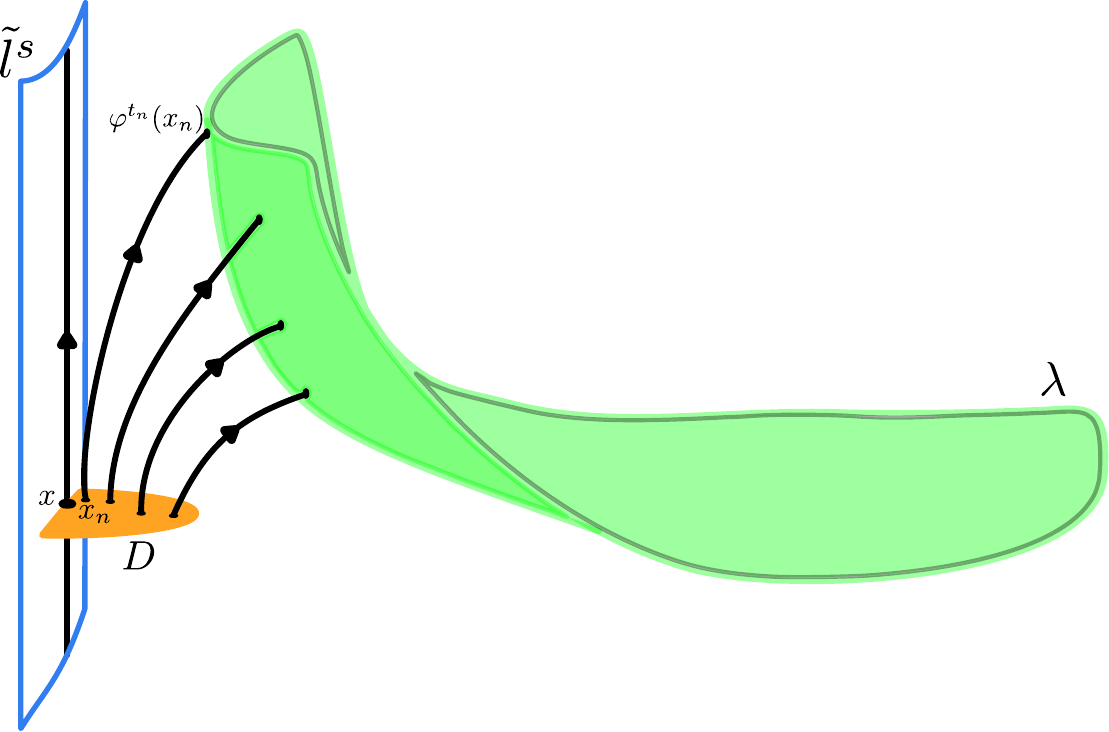}
    \caption{The leaf $\lambda$ accumulates on  $\tilde{l}^s\in\tilde{\F^s}$ going up with the flow. As the sequence $\set{x_n}$ converges to $x$, the time, $t_n$, it takes to flow along $\tilde\varphi$ from $x_n$ to $\lambda$ grows without bound.}
    \label{fig:accumulation1}
\end{figure}

Fenley proved the following important property of the shadows of leaves:
\begin{prop}[{\cite[Proposition 4.1]{Fen09}}]\label{fenleyprop}
    Let $\lambda\in\tilde{\F}$. Then $\Omega_\lambda$ is an open disk whose boundary in $\mathcal{O}$ consists of faces of leaves in $\bar{\F}^s$ and $\bar{\F}^u$. Moreover, if $\Omega_\lambda$ contains a face $f$ of $l\in \bar\F^s$ ($l\in \bar\F^u$, respectively) in its boundary, then $\lambda$ accumulates on $\tilde{f}$ going up with the flow (down with the flow, respectively), where $\tilde f$ is the lift of $f$ to $\widetilde{M}$.
\end{prop}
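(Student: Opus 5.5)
The strategy is to work in the lifted flow $\tilde\varphi$ on $\widetilde{M}$, which is transverse to $\tilde\F$. First I will show that $\pi|_\lambda:\lambda\to\mathcal{O}$ is a homeomorphism onto an open set. Openness and local injectivity come from flow boxes, since $\tilde\varphi$ is transverse to $\lambda$.

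For global injectivity, suppose an orbit of $\tilde\varphi$ met $\lambda$ twice. The orbit segment between the two hits, closed up by a path in $\lambda$, would be a transversal to $\tilde\F$ that returns to the same leaf. This contradicts the fact that $L$ is a simply connected $1$-manifold and that $\tilde\varphi$-orbits project to monotone paths in $L$. Hence $\Omega_\lambda\cong\lambda\cong\R^2$ is an open disk.

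For the boundary, take $x\in\fr\Omega_\lambda$ and points $x_n\in\Omega_\lambda$ with $x_n\to x$, and let $t_n$ be the time at which the orbit of $x_n$ hits $\lambda$.
\begin{itemize}
\item \emph{The times $t_n$ are unbounded.} If they were bounded, a subsequence of the hitting points would converge. Since $\lambda$ is properly embedded, being a leaf of a taut foliation's lift, the limit would lie in $\lambda$ and on the orbit of $x$, contradicting $x\notin\Omega_\lambda$.
\item \emph{Sign of $t_n$ determines the foliation.} Pass to a subsequence with $t_n\to+\infty$ or $t_n\to-\infty$. I claim that $t_n\to+\infty$ forces $x$ to lie on a stable leaf $l$ contained in $\fr\Omega_\lambda$, and symmetrically $t_n\to-\infty$ gives an unstable leaf. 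The mechanism is that forward orbits of points on the stable leaf of $x$ are asymptotic to the forward orbit of $x$.
\item \emph{Propagation along the leaf.} For $y$ on the stable leaf slice through $x$, use the local product structure near $x$ together with forward asymptoticity. Points near $y$ on the side of the $x_n$ are then pushed, after long forward times, into a neighborhood of the forward orbit segments of the $x_n$ that cross $\lambda$. Hence they also cross $\lambda$, so $y\in\overline{\Omega_\lambda}$.
\item \emph{The leaf misses $\Omega_\lambda$.} If the orbit of $y$ hit $\lambda$, then by transversality and openness a neighborhood of $y$ would lie in $\Omega_\lambda$. Running the previous step backwards along the leaf would then put $x$ in $\Omega_\lambda$. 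So the whole slice lies in $\fr\Omega_\lambda$.
\end{itemize}
A connectedness argument along the slice, covering it by product charts, then shows that the entire leaf slice lies in the frontier. The accumulation statement ``going up with the flow'' is exactly the statement that $t_n\to+\infty$ for every approximating sequence. That follows once I know every approach to the face from the $\lambda$-side has unbounded hitting times of a fixed sign, which the propagation step gives uniformly along the face.

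It remains to show the frontier slices are \emph{faces}, and to handle singular leaves and blow-up segments. Since $\Omega_\lambda$ is connected and disjoint from the slice, it lies in one complementary component of the slice. If that component contained two or more unstable half-leaves emanating from $l$, one prong of $l$ would separate $\Omega_\lambda$ from part of the slice. Iterating the propagation argument across the singular point would then force a different slice to be the frontier piece, so the frontier slice bounds a component meeting $\Omega_\lambda$ with one unstable prong, i.e.\ it is a face. On blow-up segments the foliations are not transverse, so the propagation needs modification; there I would instead use the attracting/repelling boundary orbits of the blow-up annulus.

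I expect the main obstacle to be this step: making the propagation rigorous near singular orbits and blow-up annuli, and excluding a mixed situation where different sequences approaching the same frontier point have $t_n$ of opposite signs. For the latter I would first try to exploit the transversality of $\tilde\varphi$ to $\tilde\F$ and the monotonicity of orbits in $L$.
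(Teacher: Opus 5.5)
The paper gives no proof of this proposition; it is quoted from Fenley, so I am judging your outline on its own. Your first two steps are fine: $\pi|_\lambda$ is an open embedding, and $|t_n|\to\infty$ follows from properness of $\lambda$. The step everything else rests on, however, is not established.

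In ``the leaf misses $\Omega_\lambda$'' you suppose $y\in\Omega_\lambda$ and run propagation backwards to put $x$ in $\Omega_\lambda$. This cannot work, for two reasons:
\begin{itemize}
\item propagation only ever yields membership in $\overline{\Omega_\lambda}$;
\item its mechanism (the orbit of $y_n$ is near the orbit of $x_n$ at the crossing time $t_n\to\infty$) says nothing in reverse, since the orbit of $y$ crosses $\lambda$ at a fixed finite time while the orbit of $x$ only approaches it as $t\to+\infty$.
\end{itemize}
Put differently, nothing you write shows that $l\cap\Omega_\lambda$ is closed in the leaf $l$, and that is the whole content of this step.

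The missing idea is a lemma: \emph{forward asymptotic orbits of $\tilde\varphi$ eventually lie on the same side of $\lambda$.} Here is how it goes.
\begin{itemize}
\item By compactness of $M$ there is $\delta>0$ such that any two $\delta$-close points lie in a common product flow box, with vertical orbit segments and $\tilde\F$-plaques that are graphs over the base.
\item A leaf of $\tilde\F$ meets each orbit at most once, so $\lambda$ meets such a box in at most one plaque. That plaque cuts the box into a part above $\lambda$ and a part below.
\item Hence if two $\delta$-close points lay on opposite sides of $\lambda$, the orbit through the lower one would cross $\lambda$ inside the box.
\item Suppose the orbit of $x$ lies below $\lambda$. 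If the orbit of some $y$ on any prong of the stable leaf of $x$ met $\lambda$, it would lie above $\lambda$ at all later times. At large times it is $\delta$-close to the orbit of $x$, which would force the orbit of $x$ to cross $\lambda$. The same argument applies if the orbit of $y$ lay entirely above $\lambda$.
\end{itemize}
So the entire stable leaf of $x$ lies below $\lambda$.

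With this lemma most of the rest is straightforward, and several of your worries disappear.
\begin{itemize}
\item Propagation needs no uniform contraction. A point $y_n$ on the stable leaf of $x_n\in\Omega_\lambda$ cannot lie entirely below $\lambda$, by the lemma. Nor can it lie entirely above, since it is close to $\tilde y$, which lies in the open region below $\lambda$. So it crosses $\lambda$.
\item The face statement is immediate. $\Omega_\lambda$ misses the whole leaf $l$, so it lies in one component of $\mathcal{O}\setminus l$, and that component is bounded by a single face. Your prong-separation argument is unnecessary.
\item The ``mixed situation'' you flag as the main obstacle is not one. The orbit of $x$ is connected and misses $\lambda$, so it lies in one component of $\widetilde M\setminus\lambda$ together with all nearby points. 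This fixes the sign of $t_n$ for every approximating sequence.
\end{itemize}

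Two things still need to be supplied. First, the accumulation clause requires every sequence approaching a point of $f$ from the $\Omega_\lambda$ side to lie in $\Omega_\lambda$. That means a one-sided product neighborhood of each point of $f$ must be contained in $\Omega_\lambda$, whereas propagation only gives $f\subset\overline{\Omega_\lambda}$. This needs a separation argument:
\begin{itemize}
\item An initial subarc of the unstable segment through a point of $f$ lies in $\Omega_\lambda$. Otherwise the stable leaf through an intermediate frontier point would separate two points of the connected set $\Omega_\lambda$.
\item The product chart over that subarc contains no frontier point. An unstable frontier leaf there would cross $f$, although it lies above $\lambda$ and $f$ lies below. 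A stable frontier leaf there would cross the subarc, which lies in $\Omega_\lambda$.
\end{itemize}
Second, the blow-up case is only a plan, and it is a genuine issue. On a blown-up stable leaf, orbits in the interior of a blow-up annulus are forward asymptotic to the attracting boundary orbit but not to the repelling one. So the lemma does not propagate across a blow-up segment, and that case needs its own argument.
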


It follows from Proposition \ref{fenleyprop} that the frontier of $\Omega_\lambda$ in $\mathcal{O}$ is a union of faces of stable and unstable leaves, denoted by $\fr\Omega_\lambda$. See Figure \ref{fig:shadowfig}.

Recently, Landry, Minsky, and Taylor \cite{LandryMinskyTaylor2024} showed the following:
\begin{thm}[{\cite[Theorem 1.1]{LandryMinskyTaylor2024}}]\label{LMTthm1}
    Let $M$ be a closed, oriented, atoroidal three-manifold with a pseudo-Anosov flow $\psi$ and an almost transverse foliation $\F$. Then there exists a family of monotone maps $\set{\phi_\lambda}_{\lambda\in L}$ together with the induced action of $\pi_1(M)$ on $\partial\mathcal{O}$ making $\partial\mathcal{O}$ a universal circle for $\F$.
\end{thm}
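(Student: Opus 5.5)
The proof builds each $\phi_\lambda$ from the shadow $\Omega_\lambda$ by passing through the compactified orbit space $\bar{\mathcal{O}}=\mathcal{O}\cup\partial\mathcal{O}$.

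\emph{Step 1: the circle $C_\lambda$ and the map $\phi'_\lambda$.} By Proposition \ref{fenleyprop}, $\Omega_\lambda$ is an open disk whose frontier in $\mathcal{O}$ is a union of faces of stable and unstable leaves. Each face $f\subset\fr\Omega_\lambda$ is a properly embedded line with two distinct endpoints in $\partial\mathcal{O}$. Hence $f$ cuts $\bar{\mathcal{O}}$ into two pieces, and $\Omega_\lambda$ lies in exactly one of them. Let $I_f\subset\partial\mathcal{O}$ be the open arc between the endpoints of $f$ on the side away from $\Omega_\lambda$.

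I will show that the closure of $\Omega_\lambda$ in $\bar{\mathcal{O}}$ is a closed disk. Its boundary circle consists of the faces of $\fr\Omega_\lambda$ together with $\partial\mathcal{O}\setminus\bigcup_f I_f$. Collapsing each closed face $\bar f$ to a point gives a circle $C_\lambda$. It comes with a monotone map $\phi'_\lambda\colon\partial\mathcal{O}\to C_\lambda$ whose gaps are exactly the arcs $I_f$, so $\core(\phi'_\lambda)$ is the closure of $\partial\mathcal{O}\setminus\bigcup_f I_f$.

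\emph{Step 2: identify $C_\lambda$ with $\partial_\infty\lambda$.} I will build a $\pi_1(M)$-equivariant homeomorphism $h_\lambda\colon C_\lambda\to\partial_\infty\lambda$ and set $\phi_\lambda=h_\lambda\circ\phi'_\lambda$. I expect this to be the main obstacle.

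The approach is to use the leafwise hyperbolic metric from Candel's theorem and the homeomorphism $\pi|_\lambda\colon\lambda\to\Omega_\lambda$.
\begin{enumerate}
\item Show that every leafwise geodesic ray in $\lambda$ projects to a path that converges in $\bar{\mathcal{O}}$, either to a point of $\core(\phi'_\lambda)$ or into a single face $\bar f$.
\item Show that asymptotic rays have the same limit in $C_\lambda$, so there is a well-defined map $\partial_\infty\lambda\to C_\lambda$.
\item Show this map is continuous, monotone and injective, hence a homeomorphism.
\end{enumerate}
For item 1, the face case uses the accumulation statement of Proposition \ref{fenleyprop}. Along a face, $\lambda$ spirals up or down the flow, and the rays approaching a face should all escape to one ideal point. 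The genuinely hard part is the converse: distinct ideal points must give distinct limits. For this I would first try a contraction/expansion argument along the flow together with atoroidality, which rules out ideal arcs of $\lambda$ collapsing in $\bar{\mathcal{O}}$.

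\emph{Step 3: the three axioms of Definition \ref{UC}.}
\begin{itemize}
\item Axiom (2), equivariance, holds because every construction above is natural with respect to the actions of $\pi_1(M)$ on $\widetilde M$, on $\mathcal{O}$ and on $\partial\mathcal{O}$.
\item Axiom (1), continuity, follows from continuity of shadows in $\lambda$: the shadows of nearby leaves converge as open sets, and the transversal flow gives local product charts relating $\partial_\infty\lambda$ for nearby $\lambda$ in $E_\infty$.
\item Axiom (3) starts from the observation that if $\lambda,\lambda'$ are incomparable, then $\Omega_\lambda\cap\Omega_{\lambda'}=\emptyset$. Indeed, a common orbit would be an oriented transversal through both leaves, since it is transverse to $\tilde{\F}$ and monotone in $L$, and this would make the leaves comparable. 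So $\Omega_\lambda$ is a connected set disjoint from the closure of $\Omega_{\lambda'}$. It therefore lies on the far side of a single face $f'\subset\fr\Omega_{\lambda'}$, which gives $\core(\phi_\lambda)\subset\overline{I_{f'}}$. I will also need to check that the endpoints of $f'$ are not in $\core(\phi_\lambda)$, so that the core lies in the open gap $I_{f'}$ of $\phi_{\lambda'}$; the symmetric argument gives the reverse inclusion.
\end{itemize}
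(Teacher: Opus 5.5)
The paper gives no proof of this statement; it is quoted from Landry--Minsky--Taylor. Your outline therefore has to stand on its own, and it has a genuine gap at exactly the point you flag.

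Steps 1 and 3 presuppose that $\partial_\infty\lambda$ is the circle obtained from $\partial\mathcal{O}$ by collapsing the arcs cut off by $\fr\Omega_\lambda$, naturally, equivariantly, and continuously in $\lambda$. That identification \emph{is} the theorem, and Step 2 only lists what would have to be true. Item 1 asks for a Cannon--Thurston-type extension of $\pi|_\lambda\colon\lambda\to\Omega_\lambda$ to $\partial_\infty\lambda$. This does not follow from Proposition \ref{fenleyprop}, which is purely qualitative: knowing that $\lambda$ accumulates on $\tilde f$ going up the flow says nothing about the leafwise hyperbolic geometry of the part of $\lambda$ near $\tilde f$. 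To show that all rays approaching a face, or a whole chain, define a single ideal point, you need a quantitative comparison between the Candel metric on $\lambda$ and the contraction of $\tilde\varphi$ along stable leaves. Injectivity on the core needs a mechanism you have not identified; ``contraction/expansion together with atoroidality'' is not an argument. Until these are supplied, $\phi_\lambda$ is not even defined.

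Several surrounding claims also need correcting.

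\begin{itemize}
\item[(i)] The gaps are not the single arcs $I_f$. Distinct frontier faces can share an ideal endpoint (perfect fits, nonseparated leaves), and their arcs then merge into one gap. By Theorem \ref{LMTthm2}, each gap is spanned by a \emph{finite} frontier chain. Finiteness is genuine content: an infinite chain could span a non-closed arc or all of $\partial\mathcal{O}$, and then $C_\lambda$ is not a circle with a monotone quotient map. Your axiom (3) argument must also use the chain through $f'$, not $f'$ alone.
\item[(ii)] Continuity cannot rest on ``shadows of nearby leaves converge.'' If $\lambda_n\to\lambda$ and $\lambda_n\to\lambda'$ with $\lambda,\lambda'$ nonseparated, then $\Omega_{\lambda_n}$ eventually contains any given compact subset of $\Omega_\lambda\cup\Omega_{\lambda'}$. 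Continuity of $\phi$ into $E_\infty$ at such leaves requires a uniform version of Step 2.
\item[(iii)] The endpoint check you postpone in axiom (3) can fail. Suppose $f'$ is a common frontier face of $\Omega_\lambda$ and $\Omega_{\lambda'}$ forming a one-face chain on both sides, a configuration that can arise for nonseparated leaves (compare Lemma \ref{branching2frontier}). Then its endpoints are gap endpoints of both maps, so they lie in both cores. The standard Calegari--Dunfield axiom asks only that $\core(\phi_\lambda)$ lie in the \emph{closure} of a single gap of $\phi_{\lambda'}$; Definition \ref{UC} as printed omits the closure. That weaker containment is exactly what your separation argument gives once it is run with chains.
\end{itemize}
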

They further provide a characterization of the gaps of the monotone maps $\set{\phi_\lambda}_{\lambda\in L}$ which we make use of repeatedly.
\begin{thm}[{\cite[Theorem 1.2]{LandryMinskyTaylor2024}}]\label{LMTthm2}
    For each $\lambda\in L$, each gap of $\phi_\lambda$ is spanned by a finite sequence of frontier components of $\Omega_\lambda$, each sharing an ideal endpoint with the next.
\end{thm}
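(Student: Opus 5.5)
The statement is Theorem~\ref{LMTthm2}, quoted from \cite{LandryMinskyTaylor2024}; here is how I would reprove it from the construction of the maps $\phi_\lambda$. The plan is to make the monotone map $\phi_\lambda$ concrete in terms of the shadow $\Omega_\lambda$, and then read off its gaps from the frontier of $\Omega_\lambda$ in the compactified disk $\bar{\mathcal{O}}$.

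\textbf{Step 1: describe $\partial_\infty\lambda$ via the shadow.} By Proposition~\ref{fenleyprop}, $\Omega_\lambda$ is an open disk whose frontier $\fr\Omega_\lambda$ is a union of faces of stable and unstable leaves. Let $\overline{\Omega}_\lambda$ be its closure in $\bar{\mathcal{O}}$. I would show that $\pi|_\lambda\colon\lambda\to\Omega_\lambda$ is a homeomorphism that is coarsely compatible with the leafwise hyperbolic metric. This should identify $\partial_\infty\lambda$ with the circle $C_\lambda$ obtained from $\partial\overline{\Omega}_\lambda$ by collapsing each arc that is not contained in $\partial\mathcal{O}$ to a point. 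The collapsed arcs are exactly the connected chains of frontier components, and each frontier component is a properly embedded line in $\mathcal{O}$ with two ideal endpoints. In the construction of \cite{LandryMinskyTaylor2024}, $\phi_\lambda\colon\partial\mathcal{O}\to C_\lambda$ is then the map that is the identity on $\partial\mathcal{O}\cap\overline{\Omega}_\lambda$. It collapses each complementary interval $I$ of $\partial\mathcal{O}\cap\overline{\Omega}_\lambda$ to the point of $C_\lambda$ corresponding to the collapsed frontier arc facing $I$. Monotonicity and degree one follow because $\overline{\Omega}_\lambda$ is a closed disk whose intersection with $\partial\mathcal{O}$ is a closed subset, met in cyclic order.

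\textbf{Step 2: identify gaps with frontier chains.} A gap of $\phi_\lambda$ is a maximal open interval $I=(a,b)\subset\partial\mathcal{O}$ collapsed to a point. By Step 1, $I$ misses $\overline{\Omega}_\lambda$ except possibly at its endpoints, so $I$ lies in the boundary of one component $U$ of $\bar{\mathcal{O}}\setminus\overline{\Omega}_\lambda$. The relative boundary of $U$ inside $\mathcal{O}$ consists of frontier components $f_1,f_2,\ldots$ of $\Omega_\lambda$. Since $\partial U$ is connected and each $f_i$ has two ideal endpoints, consecutive $f_i$ must share an ideal endpoint, and the chain runs from $a$ to $b$. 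Maximality of $I$ comes from the fact that points of $\partial\mathcal{O}\cap\overline{\Omega}_\lambda$ are separated by $\phi_\lambda$.

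\textbf{Step 3 (main obstacle): finiteness of the chain.} I must rule out an infinite chain $f_1,f_2,\ldots$ accumulating somewhere inside $I$ or at an endpoint. I would first use that frontier components of $\Omega_\lambda$ form a locally finite family in $\mathcal{O}$, which follows from Proposition~\ref{fenleyprop} and properness of faces. With local finiteness, an infinite chain would have to escape to $\partial\mathcal{O}$ and accumulate at a single ideal point $z$. Consecutive $f_i$ alternate between stable and unstable faces: same-type faces cannot share an ideal point except inside a singular leaf, which contributes only finitely many. So consecutive pairs form perfect fits, and the chain would give infinitely many leaves sharing ideal points in a nested fashion, converging to $z$.

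Using the lifted flow, $\lambda$ accumulates on each stable face going up with the flow and on each unstable face going down. An infinite alternating chain should then force $\lambda$ to accumulate on leaves of $\tilde{\F}^s$ and $\tilde{\F}^u$ arbitrarily far along a ray. I would try to contradict this using the compactness of $M$ and the pseudo-Anosov closing lemma, via a lozenge-chain argument: an infinite alternating chain of perfect fits produces an infinite string of adjacent lozenges. In an atoroidal manifold such strings have uniformly bounded length (\cite{BarthelmeMann2026}). This finiteness step is where I expect the real work to lie.
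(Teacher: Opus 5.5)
The paper gives no proof of this statement; it is imported from \cite{LandryMinskyTaylor2024}. Your sketch therefore has to stand on its own, and it has a genuine gap at its core.

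\textbf{Step 1 is the theorem, and it is only asserted.} Once $\partial_\infty\lambda$ is declared to be $\partial\overline{\Omega}_\lambda$ with each maximal chain of frontier components crushed to a point, the statement (finiteness aside) is a tautology. What must actually be proved are two facts about the leafwise hyperbolic geometry of $\lambda$. First, for each frontier face $f$, the arc of $\partial\mathcal{O}$ cut off by $f$ is collapsed by $\phi_\lambda$. This needs the part of $\lambda$ accumulating on $\tilde f$ (up the flow for stable $f$, down for unstable $f$) to converge to a \emph{single} point of $\partial_\infty\lambda$. Second, points of $\overline{\Omega}_\lambda\cap\partial\mathcal{O}$ that are not ideal endpoints of frontier faces are not identified with anything else, so no gap can swallow them. ``Coarsely compatible with the leafwise metric'' supplies neither. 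The comparison between $\lambda$ and $\Omega_\lambda\subset\bar{\mathcal{O}}$ is not coarse-geometric: whole frontier faces, sitting at finite position in $\mathcal{O}$, correspond to single ideal points of $\lambda$.

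\textbf{Your description of $\phi_\lambda$ is inconsistent with chains, and Step 2 is false.} Suppose consecutive faces $f_i,f_{i+1}$ share the ideal point $z_i$. Then $z_i\in\overline{\Omega}_\lambda\cap\partial\mathcal{O}$ lies inside the gap, so $\phi_\lambda$ is not ``the identity on $\partial\mathcal{O}\cap\overline{\Omega}_\lambda$''. In Step 2, each frontier face $f$ separates $\bar{\mathcal{O}}$. The side of $f$ away from $\Omega_\lambda$ is an entire component of $\bar{\mathcal{O}}\setminus\overline{\Omega}_\lambda$, and its relative boundary is $f$ alone. So your argument only ever produces chains of length one, whereas chains of length at least two are exactly what Section~\ref{sec:Example} exploits. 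The merging step is the easy part: once each face's arc is known to be collapsed, continuity of the monotone map $\phi_\lambda$ forces consecutive faces sharing an endpoint into the same gap.

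\textbf{Step 3 rests on false claims.} Consecutive faces need not alternate between stable and unstable. Nonseparated stable leaves share ideal points, and a stable frontier chain of branching leaves is precisely the configuration of Section~\ref{sec:Example}.

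The proposed contradiction is also unavailable. A chain of perfect fits does not by itself produce a string of lozenges. Moreover, there is no uniform bound on chains of lozenges in atoroidal manifolds. Skew Anosov flows on hyperbolic $3$-manifolds have infinite staircases of perfect fits, and infinite free homotopy classes of periodic orbits, i.e.\ infinite chains of lozenges.

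Any finiteness argument has to exploit that the chain lies in $\fr\Omega_\lambda$. Then $\Omega_\lambda$ sits on a fixed side of every face and enters each shared ideal point through the wedge between the two consecutive rays; skew staircases zigzag and violate this. Atoroidality then has to enter, for instance through finiteness of cataclysms (Theorem~\ref{Fenleybranching}) or through the leafwise geometry. As it stands, the proposal assumes the hard part and leaves finiteness open.
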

The union of leaf slices in $\fr\Omega_\lambda$ bounding a single gap is called a \textit{frontier chain}. 

To construct the laminations $\Lambda^\pm_u$ for an orbit space universal circle, the universal circle must be minimal, by Theorem \ref{CalegariLamination}. Landry, Minsky, and Taylor give a necessary and sufficient set of conditions for an orbit space universal circle to be minimal.
\begin{thm}[{\cite[Proposition 7.2]{LandryMinskyTaylor2024}}]\label{LMTmin}
    An orbit space universal circle is minimal if and only if at least one of the following holds:
    \begin{itemize}
        \item $\psi$ is not skew Anosov,
        \item $\F$ is not $\R$-covered,
        \item or $\psi$ is regulating for $\F$.
    \end{itemize}
\end{thm}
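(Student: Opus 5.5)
The plan is to recast minimality in terms of the gaps of the $\phi_\lambda$ and then apply Theorem \ref{LMTthm2} together with Proposition \ref{fenleyprop}. For $p\in\partial\mathcal{O}$, let $[p]$ be the set of $q$ with $\phi_\lambda(q)=\phi_\lambda(p)$ for every $\lambda\in L$. Each fiber $\phi_\lambda^{-1}(\phi_\lambda(p))$ is a point or a closed interval, so $[p]$ is a point or a closed interval. These classes form a $\pi_1(M)$-invariant decomposition of $\partial\mathcal{O}$, by condition (2) of Definition \ref{UC}. The circle is non-minimal exactly when some class is nondegenerate, that is, when some open interval $I\subset\partial\mathcal{O}$ lies in a single gap of $\phi_\lambda$ for every $\lambda\in L$. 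By Theorem \ref{LMTthm2}, this says that for every $\lambda$ the interval $I$ is cut off from $\Omega_\lambda$ by one frontier chain of $\fr\Omega_\lambda$. Equivalently, $I$ is disjoint from $\overline{\Omega_\lambda}\cap\partial\mathcal{O}$ up to the chain endpoints.

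\emph{If the circle is not minimal, then all three conditions fail.} Suppose a nondegenerate class exists. The union $U$ of the interiors of nondegenerate classes is open, nonempty and invariant. I would first use transitivity of $\psi$ (Mosher) and density of periodic orbits: their stable/unstable endpoints are dense in $\partial\mathcal{O}$. Combined with an attracting periodic point whose basin meets $U$, this should force $U$ to accumulate everywhere. The shadows then have to be extremely thin: each $\Omega_\lambda$ sees only the ideal points outside a dense open union of intervals. I would next argue that such thin shadows cannot have branching. If $L$ branches, Fenley's description of shadows of branching leaves yields frontier leaves whose endpoints separate any prescribed interval. So $\F$ is $\R$-covered, and the shadows $\Omega_\lambda$ vary monotonically in $\lambda$. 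If $\psi$ were regulating, every orbit would cross every leaf, so $\bigcup_\lambda\Omega_\lambda$ would exhaust $\mathcal{O}$ in both directions of $L$. Then the cores would cover $\partial\mathcal{O}$ and separate points, contradicting non-minimality. Finally, an $\R$-covered $\F$ with a non-regulating almost transverse flow should force $\bar\F^s,\bar\F^u$ to have leaf space $\R$ without branching. By Fenley's trichotomy for $\R$-covered Anosov flows, the non-product case is exactly skew Anosov; the suspension (product) case is ruled out by atoroidality.

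\emph{If all three conditions fail, the circle is not minimal.} Take $\psi$ skew Anosov, $\F$ $\R$-covered and $\psi$ not regulating. Then the union of shadows over all $\lambda$ is an invariant proper region of the skew plane bounded by stable/unstable leaves in the diamond/lozenge structure. I would use the skew structure, in which each stable leaf makes a perfect fit with the unstable leaves on its ends, to show that every shadow is a ``staircase'' region. Every such frontier chain spans the same pattern of ideal points. This would exhibit an explicit interval of $\partial\mathcal{O}$ collapsed by every $\phi_\lambda$, using the continuity condition (1) of Definition \ref{UC}.

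The main obstacle I expect is the first direction. Minimality of the $\pi_1(M)$-action on $\partial\mathcal{O}$ naively forbids nondegenerate invariant classes. The argument must therefore locate exactly where the action-level reasoning breaks and the foliation-level geometry of shadows enters. I would first try a direct case analysis of $\fr\Omega_\lambda$ via Proposition \ref{fenleyprop}, rather than a purely dynamical argument.
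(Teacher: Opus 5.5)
The paper gives no proof of this statement; it is quoted from Landry--Minsky--Taylor. Your sketch therefore has to stand on its own, and it has two genuine gaps.

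\textbf{The direction ``not minimal $\Rightarrow$ all three conditions fail.''} Your step ``this should force $U$ to accumulate everywhere'' is false in exactly the situations the theorem allows.

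Write the skew plane as the strip $\{x<y<x+1\}$, with horizontal stable leaves and vertical unstable leaves. Then $\partial\mathcal{O}$ consists of the two boundary lines of the strip plus its two ends. The ends are endpoints of no leaf and form a $\pi_1(M)$-invariant pair, so the boundary action of a skew flow is \emph{not} minimal. When the universal circle is non-minimal, one whole boundary line together with both ends is a single class. So $U$ is that line, and it is not dense.

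Your ``main obstacle'' is therefore a misdiagnosis: the action-level argument does not break, it proves part of the theorem.
\begin{itemize}
\item If $\psi$ is regulating, every shadow is $\mathcal{O}$, so each $\phi_\lambda$ is a homeomorphism.
\item If $\psi$ is not skew, then it is not $\R$-covered, since $M$ is atoroidal. Its boundary action is then minimal (this is where noncorner periodic orbits and their source--sink dynamics enter). Hence the complement of $U$ would be a nonempty proper closed invariant set, which is impossible.
\end{itemize}
You also need an argument that classes are arcs. An intersection of closed arcs through $p$ need not be an arc, since two arcs can cover the circle. Connectedness of $L$ together with lower semicontinuity of cores supplies this.

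What remains is a skew $\psi$ with $L$ branching, and there ``thin shadows cannot have branching'' is a bare assertion. Likewise, ``$\R$-covered $\F$ with a non-regulating flow forces $\bar\F^{s,u}$ to have leaf space $\R$'' is a substantial theorem of Fenley. You state it as a ``should'', with no proof or citation.

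\textbf{The converse direction.} Your starting claim is false: $\bigcup_\lambda\Omega_\lambda=\mathcal{O}$ always, because every orbit of $\tilde\varphi$ meets some leaf of $\tilde\F$. So there is no proper invariant region.

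Shadows also cannot be ``staircases''. Frontier components are entire leaves, so a shadow in the skew plane is the intersection of the strip with at most two stable and two unstable half-planes.

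What actually has to be proved is that every shadow has a stable and an unstable frontier leaf making a perfect fit at a point of one fixed boundary line. For instance, $\Omega_\lambda=\{x<c<y\}$ has frontier leaves $\{y=c\}$ and $\{x=c\}$, which share the ideal point $(c,c)$ on the lower line. By Theorem \ref{LMTthm2}, that whole line and both ends then lie in a single gap of every $\phi_\lambda$.

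Proving this requires genuinely using that $\F$ is $\R$-covered and that $\psi$ is non-regulating. One route:
\begin{itemize}
\item The leaves met by an orbit form an interval of $L$.
\item Its lower end depends only on the orbit's unstable leaf, and its upper end only on its stable leaf.
\item One must then show that these two boundary functions agree across perfect fits.
\end{itemize}
Nothing in your proposal supplies this, and the continuity axiom (1) is not the relevant input.
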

A flow $\psi$ is said to be \textit{regulating} for $\F$ given that every flow line of $\tilde{\psi}$ intersects every leaf of $\tilde{\F}$. A pseudo-Anosov flow $\psi$ is \textit{skew Anosov} if its orbit space is isomorphic to the skew bifoliated plane (see {\cite[Definition 2.3.5]{BarthelmeMann2026}}).

\begin{figure}[H]
    \centering
    \includegraphics[width= 4in]{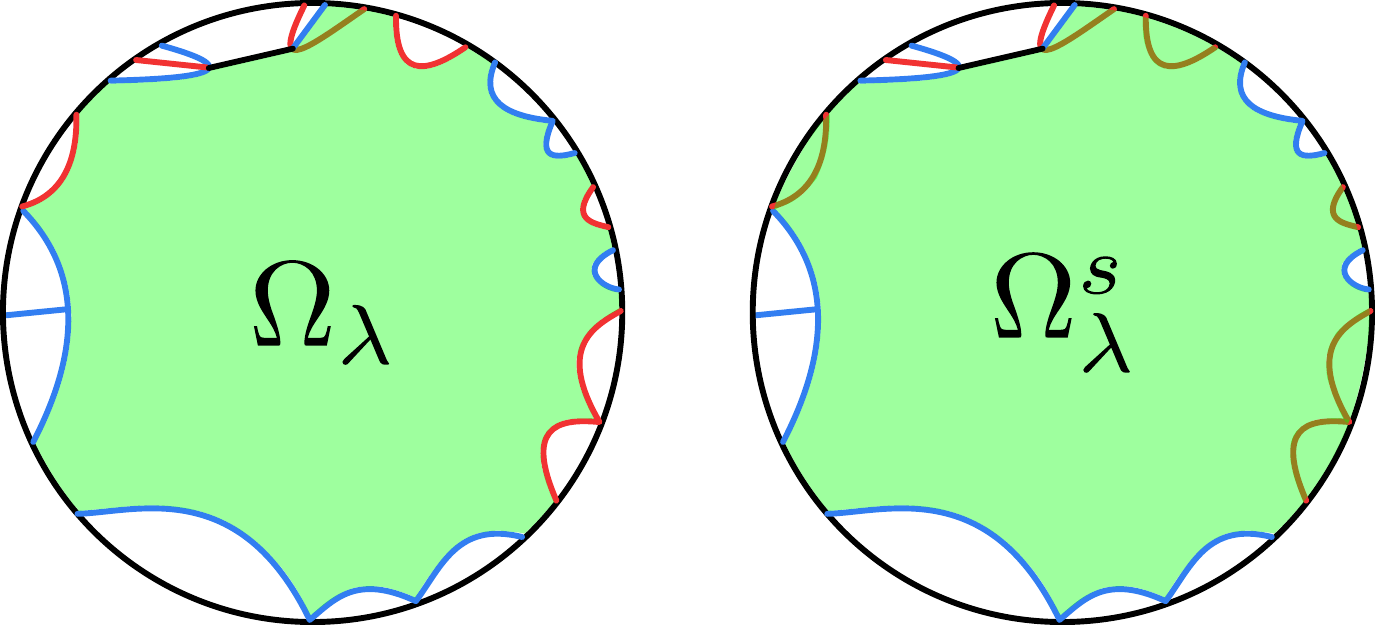}
    \caption{Left: The shadow, $\Omega_\lambda$, of a leaf $\lambda\in L$. Each gap of $\phi_\lambda$ is spanned by a finite chain of leaves in $\fr\Omega_\lambda$, the frontier of $\Omega_\lambda$. Right: The set $\Omega_\lambda^s$ associated to the shadow of $\Omega_\lambda$. Observe that $\Omega_\lambda\subseteq \Omega_\lambda^s$. }
    \label{fig:shadowfig}
\end{figure}

\section{Main Results}\label{sec:Main}

Throughout this section let $M$ be an oriented, closed, irreducible, atoroidal three-manifold, with a pseudo-Anosov flow $\psi$ almost transverse to a foliation $\F$. Let $\varphi$ be a minimal dynamic blow up of $\psi$ transverse to $\F$. We co-orient $\F$ using $\varphi$ and let $L$ be the leaf space of $\F$. The goal of this section is to prove Theorems \ref{HausdorffThm}, \ref{OneSided}, and \ref{TwoSided}.

\subsection{Dynamics of $\pi_1(M)\curvearrowright \mathcal{O}$}

Throughout the proof of the main theorems we make extensive use of the dynamics of $\pi_1(M)\curvearrowright\mathcal{O}$. We now review some relevant properties of this action. For a detailed exposition on the dynamics of $\pi_1(M)\curvearrowright\mathcal{O}$ see \cite{BarthelmeMann2026}. The next theorem gives four important properties we use repeatedly.
\begin{thm}[\cite{BarthelmeFrankelMann2025}, \cite{Fen98}]\label{Anosov-like}
The action of $\pi_1(M)$ on $\mathcal{O}$ induced by a pseudo-Anosov flow $\psi$ satisfies the following properties:
\begin{enumerate}
    \item Let $g\in \pi_1(M)$ and $g\neq id$. If $g$ fixes a leaf of $l\in \bar\F^s_\psi \cup \bar\F^u_\psi$, then $g$ has a unique fixed point in $x\in l$ and, up to taking inverses, is topologically expanding on the $\bar\F^s_\psi$ leaf through $x$ and is topologically contracting on the $\bar\F^u_\psi$ leaf through $x$.
    \item The union of $\bar\F^s_\psi$ leaves ($\bar\F^u_\psi$ leaves, respectively) with nontrivial stabilizer is dense in $\mathcal{O}$.
    \item Every singular leaf in $\bar\F^s_\psi\cup\bar\F^u_\psi$ has a nontrivial stabilizer.
    \item Every branching leaf in $\bar\F^s_\psi\cup\bar\F^u_\psi$ has a nontrivial stabilizer.
\end{enumerate}
\end{thm}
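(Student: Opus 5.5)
The four properties are of different depth, so I plan to prove them separately. Throughout, I identify stabilizers of leaves of $\bar\F^s_\psi\cup\bar\F^u_\psi$ with fundamental groups of the corresponding leaves in $M$. A deck transformation $g\neq id$ fixing $l\in\bar\F^s_\psi$ preserves the lift $\tilde l\subset\widetilde M$. Hence $g$ lies in the image of $\pi_1$ of the projected leaf of $\F^s_\psi$ in $M$.

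\textbf{Property (3).} Singular orbits of a pseudo-Anosov flow are periodic by definition. The deck transformation corresponding to such a periodic orbit fixes its point in $\mathcal{O}$, and therefore fixes the singular stable and unstable leaves through that point.

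\textbf{Property (2).} By Mosher's result \cite[Proposition 2.7]{Mosher92}, $\psi$ is transitive. The pseudo-Anosov closing lemma then shows that periodic orbits are dense in $M$. Their lifts project to a dense set of points in $\mathcal{O}$. Each such point is fixed by a nontrivial deck transformation, namely the one corresponding to the periodic orbit. Consequently the stable and unstable leaves through these points have nontrivial stabilizer, and these leaves are dense.

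\textbf{Property (1).} Leaves of $\F^s_\psi$ in $M$ are planes, annuli or Möbius bands (with prongs in the singular case). A nontrivial $\pi_1$ is generated by the core curve, which is freely homotopic within the leaf to a periodic orbit $\gamma$. The orbits in a stable leaf are forward asymptotic, so the leaf contains at most one closed orbit. Therefore $g$, or $g^2$ in the Möbius band case, is a power of the class of $\gamma$. It follows that $g$ fixes the point $x=\pi(\tilde\gamma)$, and $x$ is the unique fixed point on $l$. For the dynamics, I would compare $g$ with the time-$T$ map of $\tilde\psi$ along $\tilde\gamma$, where $T$ is the period. Forward contraction along stable leaves and backward contraction along unstable leaves make the induced map on the leaves through $x$ topologically contracting on one and expanding on the other. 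Which of the two is contracted depends on the sign of the power of $\gamma$, so the claim holds up to inverting $g$. If $g$ rotates the prongs at a singular point, I would first pass to a power of $g$ and then deduce the statement for $g$ itself.

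\textbf{Property (4).} I expect this to be the main obstacle; it is Fenley's theorem \cite{Fen98}. Let $l,l'$ be nonseparated leaves that are limits of a sequence $l_n$. First I would show that there is an unstable leaf $u$ making a perfect fit with $l$, together with a chain of lozenges joining $l$ to $l'$. Next I would choose points $x_n\in l_n$ converging to points of $l$ and $l'$, and project them to $M$. By compactness of $M$ and local product structure, there are deck transformations $g_n$ such that the configurations $g_n(l,l',u)$ nearly coincide with a fixed configuration. I would then use Hausdorffness of orbits and the discreteness of the $\pi_1(M)$-action to conclude that $g_n^{-1}g_m$ fixes $l$ for some $n\neq m$. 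The delicate points are ruling out $g_n^{-1}g_m=id$ and handling singular leaves, which requires controlling the perfect fits. If this direct recurrence argument fails, I would fall back on citing \cite{Fen98} and \cite{BarthelmeFrankelMann2025}, as the paper does.
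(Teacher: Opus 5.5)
The paper gives no proof here: the statement is quoted from \cite{BarthelmeFrankelMann2025} and \cite{Fen98}. So your fallback of citing is exactly what the paper does. Your direct arguments for (2) and (3) are correct; for (2) you need atoroidality, via Mosher's transitivity, which holds in this paper's setting.

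Where you go beyond citation, the arguments are incomplete. In (1), the description of leaves of $\F^s_\psi$ as planes, or annuli/M\"obius bands with periodic core, is not an independent input. It restates the existence half of (1): a nontrivial $g$ preserving $l$ fixes a point of $l$. So (1) is, in effect, also cited. For singular $l$ existence is automatic, since $g$ must fix the unique prong point. For nonsingular $l$ the missing step is a recurrence argument:
\begin{itemize}
\item take $y\in\tilde l$ and $s\in\R$ with $d(g\tilde\psi^{t+s}(y),\tilde\psi^{t}(y))\to 0$ as $t\to\infty$, which is possible because the orbits of $y$ and $gy$ are forward asymptotic;
\item choose $h_n\in\pi_1(M)$ and $t_n\to\infty$ with $h_n\tilde\psi^{t_n}(y)\to z$;
\item proper discontinuity on $\widetilde M$ makes $h_ngh_n^{-1}$ constant along a subsequence, equal to some $k\neq\mathrm{id}$ with $k\tilde\psi^{s}(z)=z$.
\end{itemize}
Hence $g$ preserves an orbit passing arbitrarily close to $\tilde l$, hence its unstable leaf, hence the orbit where that unstable leaf meets $\tilde l$. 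Your uniqueness and expansion/contraction discussion is fine once this is in place.

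In (4) the sketch fails at two specific points.

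First, as you set it up the recurrence is vacuous. Your points $x_n\in l_n$ converge to points of $l$ and $l'$, so they stay in a compact part of $\widetilde M$. The deck transformations you extract then lie in a finite set, and $g_n^{-1}g_m=\mathrm{id}$ is forced along a subsequence rather than something to be ruled out. You need points escaping to infinity along a ray of $l$ (for instance the ray making the perfect fit with $u$), chosen so that $l'$ stays in controlled position relative to them. Then properness of the action on $\widetilde M$ makes the $g_n$ distinct.

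Second, ``discreteness of the action'' cannot give $g_n(l)=g_m(l)$, since $\pi_1(M)$ acts very non-properly on $\mathcal{O}$ and on the leaf space. What is needed is a rigidity lemma for branching pairs. Suppose $(l_1,l_1')$ and $(l_2,l_2')$ are pairs of nonseparated leaves, approached from the same side, with $l_1,l_2$ crossing one short unstable segment and $l_1',l_2'$ crossing another. Then $l_1=l_2$ and $l_1'=l_2'$. Otherwise, comparing each pair with the sequence of leaves converging to the other, one finds a leaf that either separates two nonseparated leaves, or lies in both of the disjoint sides of a nonseparated pair facing away from its approximating sequence.

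With these two ingredients the recurrence closes; without them, (4) rests on the citation, as in the paper.
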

If $\varphi$ is a dynamic blow up of a pseudo-Anosov flow $\psi$, the orbit space of $\varphi$ is a plane $\mathcal{O}_\varphi\cong \R^2$ \cite{FenleyMosher2001} and the action $\pi_1(M) \curvearrowright \mathcal{O}_\varphi$ is semi-conjugate to the action $\pi_1(M)\curvearrowright \mathcal{O}_\psi$ via a map collapsing blow-up segments  to singular orbits of $\psi$. Another important property of the $\pi_1(M)$ action for transitive pseudo-Anosov flows, which we make use of in the proof of the main theorems, is that every leaf $l\in \bar\F^s\cup \bar\F^u$ has a dense orbit, $\pi_1(M)\cdot l$, in $\mathcal{O}$.

In the proof of Theorem \ref{TwoSided} and Theorem \ref{reconstruct} we will make use of the following result of Fenley which controls the number of branching leaves in $\bar\F^s_\psi$ and $\bar{\F}^u_\psi$:
\begin{thm}[\cite{Fen98}]\label{Fenleybranching}
    If $M$ is atoroidal then there are finitely many $\pi_1(M)$-orbits of branching leaves in $\bar\F^s_\psi$ and $\bar\F^u_\psi$. In particular, the number of leaves fixed by an arbitrary $g\in \pi_1(M)$ is finite and uniformly bounded, independent of $g$.
\end{thm}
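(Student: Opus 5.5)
This is a cited result of Fenley \cite{Fen98}. My plan is to reprove it along the lines of the standard lozenge-theoretic argument: reduce both assertions to bounding chains of lozenges in $\mathcal{O}$, then use atoroidality to rule out the long chains that would otherwise arise. The point of entry is Theorem \ref{Anosov-like}(4). A branching leaf $l$ has a nontrivial stabilizer, and I expect that a leaf $l'$ nonseparated from $l$ is fixed by the same element $g$, perhaps after passing to a power. This should come out of the argument below, since $g$ must permute the finitely many leaves adjacent to $l$ across the cataclysm.

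\emph{Step 1: fixed points of a single $g$.} I would first show that the fixed points of a nontrivial $g\in\pi_1(M)$ in $\mathcal{O}$ are the corners of a connected chain of adjacent lozenges. This uses Theorem \ref{Anosov-like}(1): each fixed point is a saddle-type point for $g$. Two fixed points of $g$ are joined by perfect fits, and for two nonseparated leaves both fixed by $g$, the region between them is filled by such a chain. Consequently:
\begin{itemize}
\item the number of leaves fixed by $g$ is at most a constant times the length of this chain;
\item the corners of the chain project to closed orbits of $\psi$ that are pairwise freely homotopic, up to orientation.
\end{itemize}
The second assertion of the theorem therefore reduces to a uniform bound on the length of chains of lozenges. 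Equivalently, it reduces to a uniform bound on the size of free homotopy classes of closed orbits.

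\emph{Step 2: bounding chains via atoroidality.} I expect this to be the main obstacle. Suppose there are chains of lozenges of arbitrarily long length, or infinitely many $\pi_1(M)$-orbits of branching pairs. Translate each configuration by deck transformations so that a marked lozenge, or a marked pair of nonseparated leaves, meets a fixed compact fundamental domain in $\widetilde{M}$. Then pass to a limit.

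For branching pairs, the leaves in a cataclysm accumulate. In the limit this should produce two leaves $l, l'$ together with infinitely many distinct leaves nonseparated from both, in a coherent periodic pattern. That forces either an infinite chain of lozenges or a scalloped region.

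A scalloped region, or a bi-infinite chain of lozenges, has a stabilizer containing $\Z^2$: one generator fixes every corner, and a second generator shifts along the chain. Such a $\Z^2$ subgroup yields an essential (Birkhoff) torus, contradicting atoroidality of $M$.

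I would first try to make the limiting argument rigorous using the compactness of $M$ and the local product structure of $\bar\F^s,\bar\F^u$. Points near the "bridging" perfect fits converge, and their limits inherit perfect fits by closedness of that relation.

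\emph{Step 3: conclusion.} The conclusion is then formal.
\begin{itemize}
\item Finitely many $\pi_1(M)$-orbits of branching leaves follows from Step 2, because each branching leaf lies in a cataclysm whose $\pi_1(M)$-orbit type is determined by a bounded chain configuration.
\item The uniform bound on the number of leaves fixed by $g$ follows from Steps 1 and 2 together. Each fixed point of $g$ lies on finitely many leaves (at most the maximal prong number, which is finite since there are finitely many singular orbits), and the chain of fixed points of $g$ has uniformly bounded length.
\end{itemize}
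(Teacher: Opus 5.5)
The paper gives no proof of this statement; it is quoted from \cite{Fen98}. On its own terms, your Step 2 fails at its central claim that a bi-infinite chain of lozenges has a stabilizer containing $\Z^2$. An element fixing all the corners may exist, but nothing forces the ``shift along the chain'' to be a deck transformation.

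Skew Anosov flows on closed hyperbolic $3$-manifolds (Fenley's $\R$-covered examples, obtained by surgery on geodesic flows) are a counterexample. In the skew plane, the map $\eta$ sending a point to the opposite corner of the lozenge above it is defined purely by perfect fits. It is therefore canonical, and every deck transformation conjugates it to $\eta^{\pm 1}$. Hence if $g$ fixes $p$, then $g$ or $g^2$ fixes every $\eta^k(p)$, $k\in\Z$. This gives infinitely many fixed leaves and an infinite free homotopy class of closed orbits, while $\pi_1(M)$ contains no $\Z^2$.

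So your Step 1 reduction targets something false for atoroidal $M$: it bounds all leaves fixed by $g$ via a uniform bound on chains of lozenges, equivalently on free homotopy classes. No limiting argument can supply that bound. Read literally, the second sentence of the statement fails for these flows, whose leaf spaces are Hausdorff. It must be understood as a uniform bound on the \emph{branching} leaves fixed by $g$, i.e.\ on the size of cataclysms. That is exactly how the paper uses it: finiteness of the polygon $P_C$ in the proof of Theorem \ref{reconstruct}.

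For that version your $\Z^2$ idea is the right one, but only for chains whose consecutive lozenges share \emph{sides}. Infinitely many mutually nonseparated leaves produce a scalloped region, whose stabilizer contains $\Z^2$. This is where atoroidality enters.

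For the first sentence, your compactness step is not yet an argument. Translating infinitely many inequivalent nonseparated pairs into a compact region and extracting a limit gives no reason for the limit to carry infinitely many leaves nonseparated from a common pair. Nonseparation is not preserved under limits: lifts of branching leaves accumulate on separated leaves, since every $\pi_1(M)$-orbit of leaves is dense. Moreover, each of your pairs could lie in a cataclysm of size two.

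Finiteness of orbits is not where atoroidality is used. What you need is:
\begin{itemize}
\item a normalization of the translated configurations under which the limit is again a nonseparated configuration;
\item a local argument that distinct configurations cannot accumulate on it.
\end{itemize}
The basic local input is that two nonseparated leaves never cross a common small transversal, because the leaves crossing a transversal form a Hausdorff interval of the leaf space.

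Finally, two of your steps presuppose the finiteness being proved: the expectation that the leaves of a cataclysm share a stabilizer because $g$ permutes finitely many of them, and the first bullet of Step 3. In Fenley's approach, periodicity of nonseparated leaves and a common nontrivial element fixing them are established first, independently of any finiteness.
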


A point $p\in \mathcal{O}$ is said to be a \textit{noncorner} point if there exists a $g$ fixing $p$ and no other points in $\mathcal{O}$. We call a leaf $l\in \bar\F^{s,u}$ \textit{regular} if it is not a non-separated leaf and is neither a singular nor a blown up leaf. For any $g\in \pi_1(M)$ fixing a regular leaf containing a noncorner point, it follows from Theorem \ref{Anosov-like} that $g$ acts on $\partial\mathcal{O}$ with source-sink dynamics, with two sources and two sinks. We highlight this class of leaves and periodic orbits as they feature throughout the proof of our main theorems.

\subsection{Shadows and Monotone Maps}
Before proceeding to the proof of the main results, we first characterize how the shadows of different leaves in $L$ relate to each other. Using this characterization, we will determine that the leaves of $\Lambda^+(\lambda)$ ($\Lambda^-(\lambda)$, respectively) are exactly the stable chains (unstable chains, respectively) in the frontier of $\Omega_\lambda$. This motivates the following definitions:
\begin{defn}
For a leaf $\lambda\in \tilde{\F}$ with shadow $\Omega_\lambda$, let $\fr\Omega_\lambda^s$ be the closure in $\bar{\mathcal{O}}$ of the set of stable leaf slices in $\fr\Omega_\lambda$, and let $\Omega_\lambda^s\subseteq \mathcal{O}$ the disk bounded by the components of $\fr\Omega_\lambda^s$. We call a connected component of $\fr\Omega_\lambda^s$ a \textit{stable frontier chain} of $\Omega_\lambda$. We define $\fr\Omega_\lambda^u$, $\Omega_\lambda^u$, and \textit{unstable frontier chains} analogously, using the unstable leaf slices in $\fr\Omega_\lambda$.
\end{defn}
Observe, by definition we have $\Omega_\lambda\subseteq\Omega_\lambda^s$ and $\Omega_\lambda\subseteq\Omega_\lambda^u$, for each $\lambda \in L$. Additionally, by Theorem \ref{LMTthm2}, each stable and unstable frontier chain bounds a subset of a gap of $\phi_\lambda$ (a single gap may be bounded by both stable and unstable frontier components). See Figure \ref{fig:shadowfig}.

We seek to relate the shadows of $\mu\in L^+(\lambda)$ with the shadow of $\lambda$ ($\mu\in L^-(\lambda)$, respectively). We first give a lemma relating $\Omega_\lambda^s$ and $\Omega_\mu^s$ ($\Omega_\lambda^u$ and $\Omega_\mu^u$, respectively) for comparable $\lambda$ and $\mu$.
\begin{prop}\label{NEWshadowcontainment}
    For any $\lambda, \mu\in L$ such that $\lambda<\mu$, we have $\Omega_\mu^s\subseteq \Omega_\lambda^s$ and $\Omega_\lambda^u\subseteq \Omega_\mu^u$.
\end{prop}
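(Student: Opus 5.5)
The plan is to prove the stable statement $\Omega_\mu^s\subseteq\Omega_\lambda^s$ directly. The unstable statement $\Omega_\lambda^u\subseteq\Omega_\mu^u$ then follows by the same argument after reversing the flow $\varphi$, which swaps $\bar\F^s$ and $\bar\F^u$, reverses the co-orientation of $\F$ (so $\lambda<\mu$ becomes $\mu<\lambda$), and swaps ``going up'' with ``going down'' in Proposition \ref{fenleyprop}.

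\textbf{Orbits in a stable face lie below $\lambda$.} For a leaf $\nu\in\tilde\F$, let $U^+(\nu)$ and $U^-(\nu)$ denote the components of $\widetilde M\setminus\nu$ on the positive and negative sides. Since $\tilde\varphi$ is positively transverse to $\tilde\F$, the set $U^+(\nu)\cup\nu$ is forward invariant. Also, $\lambda<\mu$ implies $\mu\subset U^+(\lambda)$. Now let $f$ be a stable face in $\fr\Omega_\lambda$ with lift $\tilde f$. By Proposition \ref{fenleyprop}, $\lambda$ accumulates on $\tilde f$ going up with the flow. So for $x\in\tilde f$ and points $x_n\to x$ on the $\Omega_\lambda$ side, the flow times $t_n$ from $x_n$ to $\lambda$ tend to $+\infty$. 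In particular, $\tilde\varphi^{t}(x_n)\in U^-(\lambda)$ for $t<t_n$. Passing to the limit, I claim the whole orbit of $x$ lies in $\overline{U^-(\lambda)}$. Since that orbit does not meet $\lambda$ (as $f\not\subset\Omega_\lambda$), it in fact lies in $U^-(\lambda)$. Hence no orbit of $f$ meets $\mu$, so $f\cap\Omega_\mu=\emptyset$. By taking closures, the same holds for every stable frontier chain of $\Omega_\lambda$, so $\Omega_\mu\cap\fr\Omega_\lambda^s=\emptyset$.

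\textbf{Ruling out the wrong side.} Since $\Omega_\mu$ is connected (Proposition \ref{fenleyprop}), it lies either in $\Omega_\lambda^s$ or in a single complementary region of $\fr\Omega_\lambda^s$ outside $\Omega_\lambda^s$. I expect excluding the second alternative to be the main obstacle. I would handle it by a continuity argument along the interval $[\lambda,\mu]\subset L$, which is an embedded interval because $\lambda<\mu$. Define
\[
A=\{\nu\in[\lambda,\mu] : \Omega_\nu\subseteq\Omega_\lambda^s\}.
\]
Then $\lambda\in A$, since $\Omega_\lambda\subseteq\Omega_\lambda^s$. The argument above applies to every $\nu\in(\lambda,\mu]$, so each such $\Omega_\nu$ avoids $\fr\Omega_\lambda^s$. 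Shadows are open, and nearby leaves share flow lines: any orbit through a point of $\nu$ also meets all leaves in a small neighborhood of $\nu$ in $L$, because a flow box is a product. It follows that $\Omega_\nu\cap\Omega_{\nu'}\neq\emptyset$ for $\nu'$ close to $\nu$. Hence $A$ is both open and closed in $[\lambda,\mu]$: each $\Omega_{\nu'}$ is a connected set avoiding the separating set $\fr\Omega_\lambda^s$, and it overlaps $\Omega_\nu$ for $\nu$ nearby, so it lies on the same side as $\Omega_\nu$. Connectedness of $[\lambda,\mu]$ then gives $\mu\in A$, that is, $\Omega_\mu\subseteq\Omega_\lambda^s$.

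\textbf{Conclusion.} Finally, $\Omega_\mu^s$ is the disk bounded by the stable chains of $\fr\Omega_\mu$. Each such chain lies in $\overline{\Omega_\mu}\subseteq\overline{\Omega_\lambda^s}$. Applying the first step with $\lambda$ replaced by any $\nu\in(\lambda,\mu)$ shows these chains cannot cross $\fr\Omega_\lambda^s$. Hence the disk they bound is contained in $\Omega_\lambda^s$, proving $\Omega_\mu^s\subseteq\Omega_\lambda^s$. The limiting step of the first paragraph, namely that the orbit of $x$ stays in $\overline{U^-(\lambda)}$, and the overlap claim for nearby shadows are the points I would check most carefully, since non-Hausdorffness of $L$ could interfere. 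This is why the argument stays inside the embedded interval $[\lambda,\mu]$.
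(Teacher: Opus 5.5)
Your Steps 1 and 2 are correct, but the concluding step has a genuine gap. Knowing that every stable chain of $\fr\Omega_\mu$ lies in $\overline{\Omega_\lambda^s}$ does not show that the disk they bound lies in $\Omega_\lambda^s$.

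\begin{itemize}
\item Besides $\Omega_\mu$, the set $\Omega_\mu^s$ contains, for each \emph{unstable} face $l^u\subseteq\fr\Omega_\mu$, the whole closed half-plane $\bar R$ beyond $l^u$.
\item Nothing you have proved prevents a stable face $f\subseteq\fr\Omega_\lambda$ from lying in such an $\bar R$. In that configuration $\Omega_\mu\subseteq\Omega_\lambda^s$ and all stable chains of $\Omega_\mu$ lie in $\overline{\Omega_\lambda^s}$, yet $\Omega_\mu^s$ crosses $f$.
\item Applying Step 1 to an intermediate $\nu$ only controls the frontier of $\Omega_\nu$, so it does not address this. Also, two stable leaf slices never cross transversally anyway.
\end{itemize}

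The missing ingredient is the dual, unstable half of the picture. By Proposition \ref{fenleyprop}, $\mu$ accumulates on $\tilde l^u$ going \emph{down} with the flow, so your Step 1 argument puts $\tilde l^u$ above $\mu$. Since $\pi^{-1}(\bar R)$ is connected and misses $\mu$, every orbit over $\bar R$ lies above $\mu$. Symmetrically, every orbit over the closed half-plane beyond $f$ lies below $\lambda$, hence below $\mu$. These two half-planes are therefore disjoint. Now $\Omega_\mu^s$ is $\Omega_\mu$ together with the closed half-planes beyond the unstable faces of $\fr\Omega_\mu$, so it misses the far side of every stable face of $\fr\Omega_\lambda$. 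This gives $\Omega_\mu^s\subseteq\Omega_\lambda^s$. The paper's own last two sentences pass over this point just as quickly, so this extra argument is needed in either write-up.

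Your Step 2 takes a different route from the paper. The paper rules out the wrong side directly. The closed half-plane $\bar W$ beyond a stable face $l^s$ of $\fr\Omega_\lambda$ misses $\Omega_\lambda$. Hence $\pi^{-1}(\bar W)$ is a connected subset of $\widetilde M\setminus\lambda$ containing $\tilde l^s$, so it lies entirely below $\lambda$ and cannot meet $\mu$. This is what underlies the paper's assertion that $\mu$ cannot meet any orbit on the other side of $\tilde l^s$.

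Your open--closed argument along $[\lambda,\mu]$, using that shadows of nearby leaves overlap, is valid. However, it is longer and yields only the planar statement $\Omega_\mu\subseteq\Omega_\lambda^s$. The paper's connectedness argument gives the stronger three-dimensional fact that every orbit beyond a stable face of $\lambda$ lies below $\lambda$. That fact, in dual form, is exactly what finishes the proof as above. It also uses only that $\mu$ lies on the positive side of $\lambda$ in $\widetilde M$, not the interval $[\lambda,\mu]$.
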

\begin{proof}
    Consider any $\lambda, \mu\in L$ such that $\lambda<\mu$. Then, we have that $\lambda$ is contained in the connected component of $\widetilde{M}\setminus \mu$ below $\mu$ and $\mu$ is contained in the connected component of $\widetilde{M}\setminus \lambda$ above $\lambda$. Let $l^s\subseteq \fr\Omega_\lambda^s$ be a stable leaf slice. By Proposition \ref{fenleyprop}, $\lambda
    $ accumulates on $\tilde{l}^s$ going up with the flow, where $\tilde{l}^s$ is the lift of $l^s$ to $\widetilde{M}$. Since $L$ is co-oriented by the flow, it follows that $\tilde{l}^s$ is in the connected component of $\widetilde{M}\setminus \lambda$ below $\lambda$. Hence, $\mu$ can not intersect $\tilde{l}^s$ or any orbit on the other side of $\tilde{l}^s$. Therefore, $\Omega_\mu$ must be contained in the same connected component of $\mathcal{O}\setminus l^s$ as $\Omega_\lambda$. Repeating this for each $l^s\subseteq \fr\Omega_\lambda^s$, yields $\Omega_\mu\subseteq \Omega_\lambda^s$. Since $\Omega_\mu\subseteq \Omega_\lambda^s$ any frontier component of $\Omega_\mu$ is either  contained in $\fr\Omega_\lambda^s$ or $\Omega_\lambda^s$. This implies $\Omega_\mu^s\subseteq \Omega_\lambda^s$, as desired.
    
    An analogous argument using unstable leaf slices in $\fr\Omega_\mu^u$ holds, to conclude $\Omega_\lambda^u \subseteq \Omega_\mu^u$.
\end{proof}
From Proposition \ref{NEWshadowcontainment}, we can determine the position of two comparable leaves with respect to each other simply by analyzing the containment of their shadows. 

We now consider the case when $\lambda$ and $\mu $ are branching leaves. Since $\tilde{\varphi}$ is transverse to $\tilde{\F}$, no orbit of $\tilde{\varphi}$ can intersect both $\lambda$ and $\mu$. Thus, $\Omega_\lambda$ and $\Omega_\mu$ are disjoint in $\mathcal{O}$. We now provide a lemma determining which frontier components separate $\Omega_\lambda$ and $\Omega_\mu$.

\begin{lem}\label{branching2frontier}
    Suppose $\lambda$ and $\mu$ are positively branching leaves (negatively branching, respectively). Then there exists a stable (unstable, respectively) leaf slice $l$ such that $l\subset \fr\Omega_\lambda$ separating $\Omega_\lambda$ from $\Omega_\mu$. 
\end{lem}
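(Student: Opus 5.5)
Positive branching means there is a leaf $\nu$ with $\nu<\lambda$ and $\nu<\mu$, while $\lambda,\mu$ are nonseparated, hence incomparable. The plan is to use the flow direction to locate a stable face of $\fr\Omega_\lambda$ between the two shadows, mirroring the proof of Proposition \ref{NEWshadowcontainment}. The key object is the region of $\widetilde{M}$ lying above $\lambda$ and above $\mu$.

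\textbf{Step 1: choose a comparison point.} Take an orbit $\gamma$ of $\tilde\varphi$ through a point of $\mu$. Since $\tilde\varphi$ is transverse to $\tilde\F$ and co-orients it, $\gamma$ is a properly embedded line crossing each leaf at most once, and increasing along the flow. Since $\lambda,\mu$ are incomparable, $\gamma$ cannot meet $\lambda$. So $\pi(\gamma)\notin\Omega_\lambda$, and $\Omega_\lambda\cap\Omega_\mu=\emptyset$, as already noted in the text.

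\textbf{Step 2: find a separating frontier component.} Pick a path in $\mathcal{O}$ from a point of $\Omega_\lambda$ to $\pi(\gamma)$. It exits the open disk $\Omega_\lambda$, so it crosses $\fr\Omega_\lambda$ somewhere. By Proposition \ref{fenleyprop}, that frontier is a union of stable and unstable faces. Let $l$ be the frontier face of $\Omega_\lambda$ whose complementary component (away from $\Omega_\lambda$) contains the connected set $\Omega_\mu$. This is well defined because $\Omega_\mu$ is connected and disjoint from $\overline{\Omega_\lambda}$. That disjointness itself needs a short argument, which is the same transversality reasoning applied to leaves near $\mu$.

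\textbf{Step 3: rule out $l$ unstable.} Suppose $l$ were unstable. By Proposition \ref{fenleyprop}, $\lambda$ accumulates on $\tilde l$ going down with the flow, so $\tilde l$ lies above $\lambda$ in $\widetilde{M}$. Now use $\nu$. The orbits through points of $\nu$ near the branching region reach both $\lambda$ and $\mu$ by flowing upward: the leaves just above $\nu$ along a transversal accumulate onto both $\lambda$ and $\mu$, since these are nonseparated limits of a sequence $\nu_n$ with $\nu<\nu_n$. Thus $\Omega_{\nu_n}$ meets both $\Omega_\lambda$ and $\Omega_\mu$ in the limit. Equivalently, $\Omega_\lambda\cup\Omega_\mu$ lies in the closure of $\bigcup\Omega_{\nu_n}$, with $\Omega_{\nu_n}\subseteq\Omega^u_\lambda$ by the unstable half of Proposition \ref{NEWshadowcontainment}, since $\nu_n<\lambda$. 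But $\Omega_\mu$ lies on the far side of the unstable face $l\subseteq\fr\Omega^u_\lambda$. Hence $\Omega_{\nu_n}$, a connected set meeting points near $\Omega_\mu$, would have to cross $l$. That contradicts $\Omega_{\nu_n}\subseteq\Omega^u_\lambda$. Therefore $l$ must be stable. The negative branching case is symmetric: reverse the flow and use $\Omega^s$ containment.

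\textbf{Main obstacle.} The delicate step is justifying that the leaves $\nu_n$ converging to both $\lambda$ and $\mu$ have shadows approaching $\Omega_\mu$, so that they are forced across $l$. This uses continuity of shadows along a transversal together with $\Omega_{\nu_n}\subseteq\Omega_\lambda^u$. It must be done carefully, since shadows are open and $\Omega_\mu$ need only be approached rather than contained. I would handle it by fixing a point $x\in\Omega_\mu$ and the orbit through $x$. That orbit hits $\mu$, so by openness of the transverse intersection it hits $\nu_n$ for large $n$. Hence $x\in\Omega_{\nu_n}\subseteq\Omega^u_\lambda$, contradicting that $x$ lies beyond $l$.
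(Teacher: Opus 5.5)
Your proposal is correct, but it rules out an unstable separating face by a different mechanism than the paper.

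\textbf{The paper's route.} It fixes a single common lower bound $\eta<\lambda,\mu$. It applies Proposition \ref{NEWshadowcontainment} to both pairs to get $\Omega_\eta\subseteq\Omega^u_\lambda\cap\Omega^u_\mu$. It then declares this incompatible with an unstable $l$ separating the shadows. That leaves implicit why $\Omega^u_\mu$ cannot reach back across $l$: $l$ need not be a frontier component of $\Omega_\mu$, and nothing in the argument forces $\Omega_\eta$ to meet $\Omega_\mu$.

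\textbf{Your route.} You use the fact that $\lambda,\mu$ are nonseparated, so there are leaves $\nu_n<\lambda$ converging to $\mu$ in $L$. A short flow segment through a point of $\mu$ projects onto a neighborhood of $\mu$ in $L$. Hence every $x\in\Omega_\mu$ lies in $\Omega_{\nu_n}\subseteq\Omega^u_\lambda$ for large $n$, which gives an explicit point of $\Omega^u_\lambda$ beyond $l$.

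\textbf{What each buys.} Your version needs only the one containment for $\lambda$, and the contradiction is fully explicit. The cost is that it uses genuine nonseparation, not just incomparability with a common lower bound; the paper's route nominally needs only the latter. You can recover that generality from your viewpoint. Since $\eta<\mu$ and $\lambda\notin[\eta,\mu]$, we have $\mu\in L^-(\lambda)$. As in the proof of Proposition \ref{NEWshadowcontainment}, every orbit beyond an unstable face of $\fr\Omega_\lambda$ lies entirely above $\lambda$, so $\mu$ would lie above $\lambda$, a contradiction.

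\textbf{Two small points.} First, your sentence claiming that orbits through a fixed $\nu$ reach both $\lambda$ and $\mu$ is unjustified. It is also unneeded: the $\nu_n$ argument in your last paragraph carries the proof. Second, $\Omega_\mu\cap\overline{\Omega_\lambda}=\emptyset$ follows at once because $\Omega_\mu$ is open and disjoint from $\Omega_\lambda$, so no transversality argument is required.
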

\begin{proof}
    We first consider the case where $\lambda$ and $\mu$ are positively branching. The proof where $\lambda$ and $\mu$ are negatively branching is analogous, switching the roles of stable with unstable.

    By assumption $\lambda$ and $\mu$ are incomparable, so the shadows $\Omega_\lambda$ and $\Omega_\mu$ are disjoint. It follows from Proposition \ref{fenleyprop}, that there exists a leaf slice $l\subseteq \fr\Omega_\lambda$, separating $\Omega_\lambda$ and $\Omega_\mu$ in $\mathcal{O}$. We will now show that this $l$ is a stable leaf slice. 
    
    Since $\lambda$ and $\mu$ are positively branching, there exists $\eta\in L$ such that $\eta<\lambda,\mu$. By Proposition \ref{NEWshadowcontainment}, we have that $\Omega_\eta^u\subseteq \Omega_\lambda^u$ and $\Omega_\eta^u\subseteq \Omega_\mu^u$. Since $l$ is the frontier component which separates $\Omega_\lambda$ and $\Omega_\mu$, if $l$ was unstable, then $\Omega_\eta \subseteq \Omega_\lambda^u$ and $\Omega_\eta\subseteq\Omega_\mu^u$. But this is not possible since $l$ separates $\Omega_\lambda$ and $\Omega_\mu$, which is a contradiction. Therefore, $l$ must be a stable leaf slice.
\end{proof}

As a consequence of Lemma \ref{branching2frontier}, we have that for any positively branching leaves $\lambda$ and $\mu$ (negatively branching, respectively) and any leaf $\eta\in L^+(\mu)$ ($\eta\in L^-(\mu)$, respectively), $\Omega_\eta$ is separated from $\Omega_\lambda$ by the same frontier component separating $\Omega_\mu$ from $\Omega_\lambda$. 

\begin{cor}\label{incomparableShadows}
Suppose that $\lambda$ and $\mu$ are positively branching leaves (negatively branching, respectively) and let $l\subseteq \fr\Omega_\lambda$ separating $\Omega_\lambda$ and $\Omega_\mu$. Then for any $\eta\in L^+(\mu)$ ($\eta\in L^-(\mu)$, respectively), the shadow $\Omega_\eta$ lies on the same side of $l$ as $\Omega_\mu$.
\end{cor}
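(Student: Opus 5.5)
The plan is to prove the positively branching case and obtain the negative case by swapping stable with unstable and reversing the order. Fix positively branching leaves $\lambda,\mu$ and the leaf slice $l\subseteq\fr\Omega_\lambda$ separating $\Omega_\lambda$ from $\Omega_\mu$; by Lemma \ref{branching2frontier}, $l$ is a stable leaf slice. Let $\eta\in L^+(\mu)$, so $\mu<\eta$. The argument comes directly from the proof of Proposition \ref{NEWshadowcontainment}, using only the stable frontier of $\Omega_\lambda$ and the co-orientation.

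First I show $\Omega_\eta\cap l=\emptyset$. By Proposition \ref{fenleyprop}, $\lambda$ accumulates on the lift $\tilde l$ going up with the flow. Because $\tilde\varphi$ co-orients $\tilde\F$, this places $\tilde l$ in the component of $\widetilde M\setminus\lambda$ below $\lambda$. Next I claim $\eta$ cannot meet $\tilde l$, nor any orbit projecting to a point of $l$. Suppose instead that some orbit through $\tilde l$ met $\eta$. Since $\mu<\eta$, the orbit through a point of $\eta$ passes below $\eta$ into the region where $\mu$ lies. Combining this with the fact that orbits through $\tilde l$ meet leaves arbitrarily close to $\lambda$, I would derive that $\lambda$ and $\eta$ are comparable, or that $\lambda$ and $\mu$ are. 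That contradicts their being branching: $\lambda$ and $\mu$ are incomparable, and $\eta\in L^+(\mu)$ lies in the component of $L\setminus\{\lambda\}$ not containing $\lambda$'s positive side.

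A cleaner version of the same step uses shadows directly. Since $\mu<\eta$, Proposition \ref{NEWshadowcontainment} gives $\Omega_\eta\subseteq\Omega_\eta^s\subseteq\Omega_\mu^s$. So it suffices to show $\Omega_\mu^s$ lies on the $\Omega_\mu$ side of $l$. Since $\Omega_\mu$ is connected and disjoint from $l$, it lies in one component $U$ of $\mathcal O\setminus l$. The stable frontier chains of $\Omega_\mu$ are leaf slices of $\bar\F^s$, which cannot cross the stable slice $l$ transversally. So each component of $\fr\Omega_\mu^s$ either lies in $\overline U$ or coincides with $l$. It follows that the disk $\Omega_\mu^s$ is contained in $U$ unless $l$ itself were a frontier chain of $\Omega_\mu$ bounding it on the side away from $U$. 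That is impossible, since $\Omega_\mu\subseteq\Omega_\mu^s$ lies in $U$.

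Finally, $\Omega_\eta\subseteq\Omega_\mu^s\subseteq U$, which is exactly the statement.

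The main obstacle is the middle step: showing that the disk $\Omega_\mu^s$, bounded by stable chains, does not spill across $l$. Foliation leaves do not cross, but at singular leaves a chain might share part of $l$ or meet it at a prong. I would handle this by noting that $l$ is a single properly embedded slice, that distinct stable leaf slices are either disjoint or share a prong or a boundary point, and that $\Omega_\mu^s$ is the disk on the $\Omega_\mu$-side of each of its frontier chains.
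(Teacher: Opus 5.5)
There are two genuine gaps, both in the ``cleaner version'' your proof actually relies on.

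The first gap is the passage from ``each component of $\fr\Omega_\mu^s$ lies in $\overline U$'' to ``$\Omega_\mu^s\subseteq U$''. The disk $\Omega_\mu^s$ is what lies on the $\Omega_\mu$-side of \emph{every} stable frontier chain of $\Omega_\mu$. If no such chain sits between $\Omega_\mu$ and $l$, then $l\subseteq\Omega_\mu^s$ and $\Omega_\mu^s$ meets both sides of $l$. The case $\fr\Omega_\mu^s=\emptyset$, where $\Omega_\mu^s=\mathcal O$, already shows the inference is invalid. The same happens whenever the frontier component of $\Omega_\mu$ that cuts it off from $l$ is unstable. Your argument uses positive branching only to make $l$ stable, and never excludes this configuration. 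The missing idea is to apply Lemma \ref{branching2frontier} with the roles of $\lambda$ and $\mu$ interchanged. This gives a stable slice $l'\subseteq\fr\Omega_\mu$ separating $\Omega_\mu$ from $\Omega_\lambda$. Let $U'$ be the component of $\mathcal O\setminus l'$ containing $\Omega_\mu$. It is disjoint from $l$, because $l\subseteq\overline{\Omega_\lambda}$ lies in the closure of the other component. So $U'\subseteq U$, and $\Omega_\mu^s\subseteq U'$ since $l'$ bounds $\Omega_\mu^s$. This $l'$ is exactly what the paper's proof is built on.

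The second gap is the claim ``$\eta\in L^+(\mu)$, so $\mu<\eta$'', which is false when $L$ is non-Hausdorff. $L^+(\mu)$ is the entire component of $L\setminus\{\mu\}$ on the positive side, and it contains leaves incomparable with $\mu$. For example, if $\mu<\nu_1$ and $\nu_1,\nu_2$ branch negatively, then $\nu_2\in L^+(\mu)$ is incomparable with $\mu$. The statement, and its use in Lemma \ref{coredisjointness}, covers such $\eta$, and Proposition \ref{NEWshadowcontainment} says nothing about them.

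The paper's argument covers all of $L^+(\mu)$ at once by working in $\widetilde M$. Since $\mu$ accumulates on $\tilde l'$ going up with the flow, the saturated set $\tilde l'=\pi^{-1}(l')$ lies in the component of $\widetilde M\setminus\mu$ below $\mu$. Every $\eta\in L^+(\mu)$ lies in the component above. The projection of that upper component is connected, misses $l'$, and meets $\Omega_\mu$, hence lies in $U'\subseteq U$.

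Your first sketch could be turned into a proof of this type using $l$ directly. If $\eta$ met an orbit over $l$, points of $\eta$ just on the $\Omega_\lambda$-side would flow forward into $\lambda$ by Proposition \ref{fenleyprop}, so $\eta<\lambda$. That is incompatible with $\eta\in L^+(\mu)$, $\lambda\in L^-(\mu)$ and the incomparability of $\lambda,\mu$. As written, however, the sketch has neither this mechanism nor the connectivity argument that decides which side of $l$ the shadow lands on.
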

\begin{proof}
    We prove the claim when $\lambda$ and $\mu$ are positively branching. The proof of the claim when $\lambda$ and $\mu$ are negatively branching is analogous, swapping the roles of stable and unstable foliations.

    Since $\lambda$ and $\mu$ are positively branching, by Lemma \ref{branching2frontier} we have that there exists $l\subseteq \fr\Omega_\lambda$ and $l'\subseteq\fr\Omega_\mu$ stable leaf slices, each separating the shadows $\Omega_\lambda$ and $\Omega_\mu$. Let $\tilde{l}'$ denote the lift of $l'$ to $\widetilde{M}$. By Proposition \ref{fenleyprop}, we have that $\mu$ accumulates on $\tilde{l}'$ going up with respect to the flow. Hence $\tilde{l}'$ is contained in the component of $\widetilde{M}\setminus \mu$ below $\mu$. 

    By definition of $L^+(\mu)$, any $\eta\in L^+(\mu)$ is contained in the component of $\widetilde{M}\setminus \mu$ above $\mu$. Since $L$ is co-oriented by the flow, any such $\eta$ can not intersect $\tilde{l}'$ and has shadow $\Omega_\eta \subseteq \mathcal{O}\setminus l'$ contained in the same component as $\Omega_\mu$. Therefore, $\Omega_\eta$ is contained on the same side of $l$ as $\Omega_\mu$, as desired.
\end{proof}

Proposition \ref{NEWshadowcontainment} and Corollary \ref{incomparableShadows} will allow us to determine $\core(L^+(\lambda))$ and $\core(L^-(\lambda))$ as subsets of $\partial\mathcal{O}$. We first define some relevant subsets of $\partial \mathcal{O}$.
\begin{defn}
    Let $\lambda\in L$. We define $I_\lambda^s\subset \partial\mathcal{O}$ to be the open arcs bounded by stable frontier chains of $\fr\Omega_\lambda^s$. We define $I_\lambda^u\subset \partial\mathcal{O}$ analogously using unstable frontier chains of $\fr\Omega_\lambda^u$.
\end{defn}
See Figure \ref{fig:propfig} for an illustration of $I_\lambda^s$ and $I_\lambda^u$. Using Proposition \ref{NEWshadowcontainment} and Corollary \ref{incomparableShadows}, we will show that for any $\mu\in L^+(\lambda)$ ($\mu\in L^-(\lambda)$, respectively) $\core(\phi_\mu)$ is disjoint from $I_\lambda^s$ ($I_\lambda^u$, respectively). This gives the following lemma.
\begin{figure}[H]
    \centering
    \includegraphics[width= 3.5in]{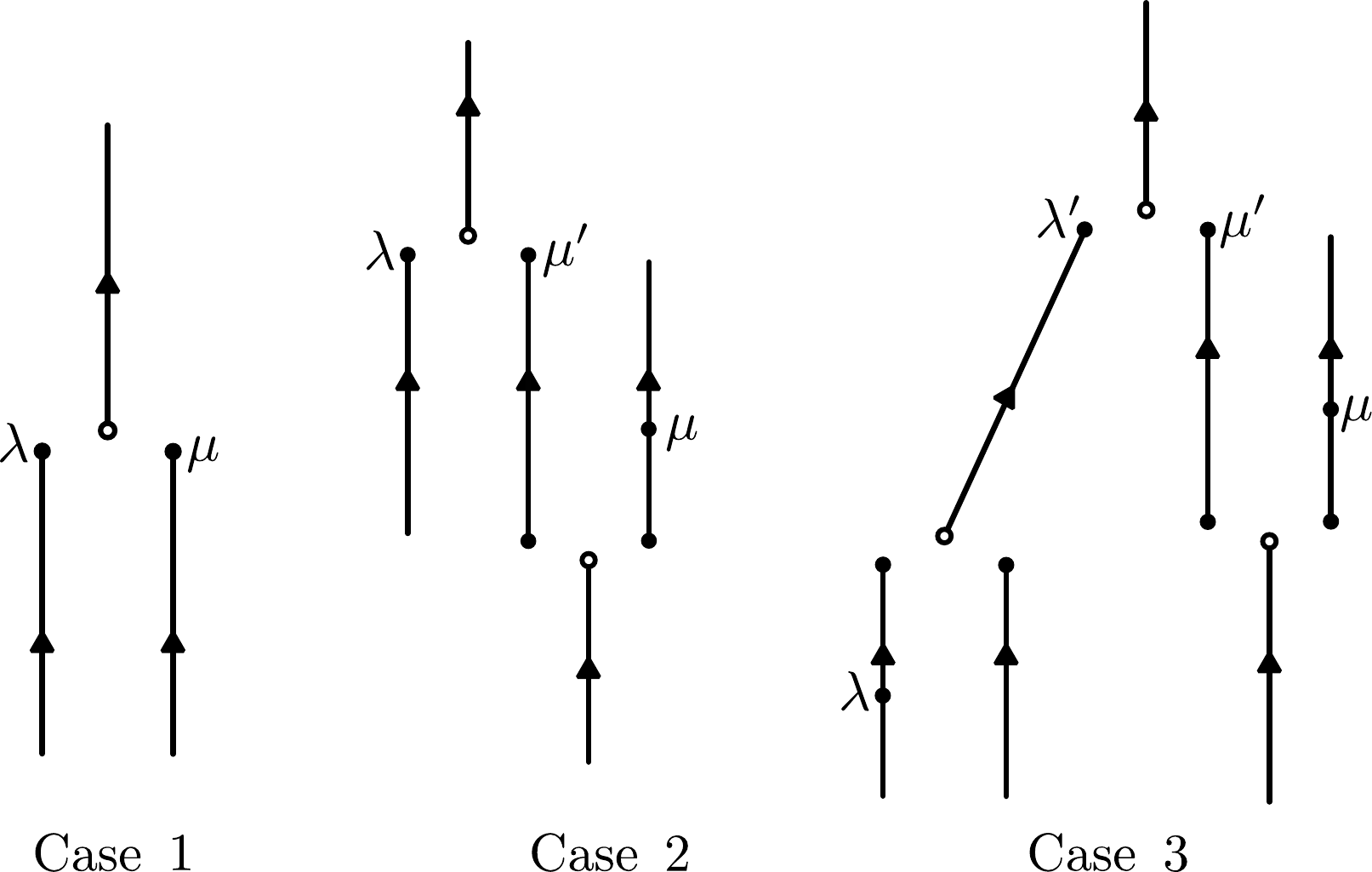}
    \caption{An illustration of the three cases for $\mu \in L^+(\lambda)$ in the proof of Lemma  \ref{coredisjointness}.}
    \label{fig:coredisjointness lemma}
\end{figure}
\begin{lem}\label{coredisjointness}
     For any $\lambda\in L$ we have $\core(L^+(\lambda)) \cap I_\lambda^s = \emptyset$  and $\core(L^-(\lambda)) \cap I_\lambda^u = \emptyset$.
\end{lem}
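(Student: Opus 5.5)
By symmetry (reversing the co-orientation, i.e. the direction of the flow, exchanges stable and unstable and swaps $L^+$ with $L^-$), it suffices to show $\core(L^+(\lambda))\cap I_\lambda^s=\emptyset$. Since $I_\lambda^s$ is open and $\core(L^+(\lambda))$ is the closure of $\bigcup_{\mu\in L^+(\lambda)}\core(\phi_\mu)$, it is enough to show $\core(\phi_\mu)\cap I_\lambda^s=\emptyset$ for each $\mu\in L^+(\lambda)$. Fix such a $\mu$ and a component $I$ of $I_\lambda^s$, bounded by a stable frontier chain $c\subseteq\fr\Omega_\lambda^s$. The key geometric input will be that $\Omega_\mu$ lies in the closed side of $c$ that contains $\Omega_\lambda$, i.e. away from the region $R$ of $\bar{\mathcal{O}}$ cut off by $c$ whose ideal boundary is $\bar I$. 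I then argue that every point of $I$ is in a gap of $\phi_\mu$.

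I would split into three cases according to the position of $\mu$ relative to $\lambda$ (as in Figure \ref{fig:coredisjointness lemma}). \textbf{(a)} If $\lambda<\mu$, Proposition \ref{NEWshadowcontainment} gives $\Omega_\mu\subseteq\Omega_\mu^s\subseteq\Omega_\lambda^s$, so $\Omega_\mu$ is disjoint from $R$. \textbf{(b)} If $\mu$ is incomparable to $\lambda$ but lies in $L^+(\lambda)$, then there is a leaf $\lambda'$ with $\lambda<\lambda'$ (or $\lambda=\lambda'$) and a leaf $\mu'\le\mu$ such that $\lambda',\mu'$ are positively branching, since $L^+(\lambda)$ is connected and $L$ is a simply connected 1-manifold. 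By Lemma \ref{branching2frontier} and Corollary \ref{incomparableShadows}, $\Omega_\mu$ lies on the far side of a stable slice $l\subseteq\fr\Omega_{\lambda'}$ separating $\Omega_{\lambda'}$ from $\Omega_{\mu'}$. Combining this with case (a) for $\lambda<\lambda'$, namely $\Omega_{\lambda'}^s\subseteq\Omega_\lambda^s$, I would conclude that $\Omega_\mu$ still avoids the interior of $R$. The point is that $l$ lies in $\overline{\Omega_\lambda^s}$, and a stable leaf cannot cross the stable chain $c$, so $l$ and everything beyond it sit on the $\Omega_\lambda$ side of $c$. \textbf{(c)} The degenerate subcase where $\mu$ is only nonseparated from a leaf above $\lambda$ is handled inside (b).

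Once $\Omega_\mu\cap R=\emptyset$, I would use Theorem \ref{LMTthm2}. The ideal arc $I$ is disjoint from the closure in $\bar{\mathcal{O}}$ of $\Omega_\mu$, except possibly at the endpoints of $c$. Hence $I$ lies in the closure of a single complementary region of $\overline{\Omega_\mu}$ in $\bar{\mathcal{O}}$, and that region's ideal arc is spanned by a frontier chain of $\Omega_\mu$. This frontier chain spans a gap of $\phi_\mu$ containing $I$, so $I\cap\core(\phi_\mu)=\emptyset$.

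The main obstacle is this last step: justifying that an open boundary arc not touched by $\overline{\Omega_\mu}$ is contained in one gap of $\phi_\mu$, rather than only meeting gaps. This requires knowing that the core of $\phi_\mu$ is exactly the accumulation set of $\Omega_\mu$ on $\partial\mathcal{O}$. I would try to extract this from the Landry–Minsky–Taylor description. Concretely, gaps correspond to the ideal arcs cut off by frontier chains, and the core is the closure of the limit set of $\Omega_\mu$. Granting that, the endpoints of $I$ may lie in the core, but the open arc does not, which is all that is needed.
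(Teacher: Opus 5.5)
Your reduction to a single leaf $\mu$ (using that $I_\lambda^s$ is open) and your case (a) are fine and agree with the paper. The gap is in case (b): it has the branching direction reversed, so the leaves it is meant to handle are never treated.

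Suppose $\lambda\le\lambda'$, the leaves $\lambda',\mu'$ branch positively, and $\mu'\le\mu$. There are two possibilities.
If $\lambda'=\lambda$, then $\mu'$, and hence $\mu$, is reached by going \emph{down} from $\lambda$, so $\mu\in L^-(\lambda)$.
If $\lambda<\lambda'$, then the oriented interval from $\lambda$ to $\lambda'$ ends in the half-neighbourhood below $\lambda'$ that $\lambda'$ shares with $\mu'$, so $\lambda<\mu'\le\mu$.
Either way $\mu$ is not an incomparable leaf of $L^+(\lambda)$. The incomparable leaves of $L^+(\lambda)$ are those with $\lambda\le\lambda'$, with $\lambda',\mu'$ \emph{negatively} branching, and with $\mu=\mu'$ or $\mu\in L^-(\mu')$. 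For these, Lemma \ref{branching2frontier} and Corollary \ref{incomparableShadows} give an \emph{unstable} separating slice. The paper combines this with Proposition \ref{NEWshadowcontainment} (when $\lambda<\lambda'$) to show that the frontier component of $\Omega_\lambda$ separating $\Omega_\lambda$ from $\Omega_\mu$ is unstable, so $\core(\phi_\mu)\subseteq I^u_\lambda$.

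The geometric step you rely on is also invalid. From ``$l\subseteq\overline{\Omega^s_\lambda}$ is stable and cannot cross $c$'' it does not follow that everything beyond $l$ lies on the $\Omega_\lambda$ side of $c$, because $l$ may itself be a slice of $c$. For $\lambda'=\lambda$, which you allow, Lemma \ref{branching2frontier} gives a stable $l\subseteq\fr\Omega_\lambda$. So $l$ lies in some stable chain $c$, and $\Omega_\mu$ lies in the region $R$ cut off by $c$, the opposite of your conclusion. This is consistent with the lemma only because such $\mu$ lie in $L^-(\lambda)$.

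A repair that avoids the case analysis altogether: the stable half of the proof of Proposition \ref{NEWshadowcontainment} only uses that $\mu$ lies in the component of $\widetilde{M}\setminus\lambda$ above $\lambda$. That holds for every $\mu\in L^+(\lambda)$, comparable or not. For each slice $s$ of $c$, let $H_s$ be the closed half-plane beyond $s$. The connected saturated set $\pi^{-1}(H_s)$ misses $\lambda$ and contains $\tilde s$, which lies below $\lambda$ by Proposition \ref{fenleyprop}. Hence $\pi^{-1}(H_s)$ lies below $\lambda$, misses $\mu$, and so $\Omega_\mu\cap R=\emptyset$.

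Your proposed characterization of $\core(\phi_\mu)$ as the accumulation set of $\Omega_\mu$ on $\partial\mathcal{O}$ is also false. A shadow accumulates at the common ideal endpoint of two consecutive slices of one of its frontier chains, and that point lies inside a gap. Work slice by slice instead. Each open half-plane beyond a slice of $c$ misses $\overline{\Omega_\mu}$, so it lies beyond a frontier slice of $\Omega_\mu$. By Theorem \ref{LMTthm2}, $\phi_\mu$ is then constant on the corresponding open sub-arc of $I$. Continuity at the shared endpoints makes $\phi_\mu$ constant on all of $I$.
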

\begin{proof}
    We first prove that $\core(L^+(\lambda)) \cap I_\lambda^s = \emptyset$. The proof for $I_\lambda^u$ is the same replacing $(+)$ with $(-)$ and stable with unstable. We proceed by cases, showing that $\core(\phi_\mu)$ is disjoint from $I_\lambda^s$ first for  $\lambda<\mu$ and then for $\mu\in L^+(\lambda)$ incomparable with $\lambda$. 
    
    Suppose that $\mu\in L^+(\lambda)$ is comparable with $\lambda$. Then, by Proposition \ref{NEWshadowcontainment}, we have that $ \Omega_\mu^s\subseteq\Omega_\lambda^s$. Thus, $\core(\phi_\mu)$ is disjoint from $I_\lambda^s$, for all $\mu$ such that $\lambda<\mu$.

    We now consider $\mu\in L^+(\lambda)$ incomparable with $\lambda$. Then $\Omega_\mu$ is disjoint from $\Omega_\lambda$. There are three possible cases for such a $\mu$ and in each case we will show $\Omega_\mu$ is separated from $\Omega_\lambda$ by an unstable frontier component of $\Omega_\lambda$. This shows that $\core(\phi_\mu)\subseteq I_\lambda^u$ and therefore $\core(\phi_\mu)$ is disjoint from $I_\lambda^s$. 
    
    If $\mu$ is negatively branching with $\lambda$, then by Lemma \ref{branching2frontier} we have that $\Omega_\mu$ is separated from $\Omega_\lambda$ by an unstable frontier component and we are done. Similarly, if $\lambda$ is negatively branching with a leaf $\mu'$ such that $\mu \in L^-(\mu')$, then by Corollary \ref{incomparableShadows} we have that $\Omega_\mu$ is separated from $\Omega_\lambda$ by an unstable frontier component and we are done. Otherwise, there exists $\lambda'$ and $\mu'$ negatively branching such that $\lambda<\lambda'$ and either $\mu'=\mu$ or $\mu\in L^-(\mu')$. Then by Corollary \ref{incomparableShadows}, $\Omega_\mu$ is separated from $\Omega_{\lambda'}$ by an unstable frontier component $l\subseteq \fr\Omega_{\lambda'}$. By Proposition \ref{NEWshadowcontainment}, we have that $\Omega_\lambda^u\subseteq\Omega_{\lambda'}^u$ and $\Omega_{\lambda'}^s\subseteq\Omega_\lambda^s$. Since $l$ separating $\Omega_{\lambda'}$ and $\Omega_\mu$ is unstable, by the containments above, we conclude the frontier component $l'\subset\fr\Omega_\lambda$ separating $\Omega_{\lambda}$ from $\Omega_\mu$ can not be stable. Therefore, $l'$ is unstable and $\core(\phi_\mu)\subseteq I_\lambda^u$, which completes the proof.
\end{proof}

By Lemma \ref{coredisjointness}, we conclude that $\core(L^+(\lambda))\subseteq \partial\mathcal{O}\setminus I_\lambda^s$ and $\core(L^-(\lambda))\subseteq \partial\mathcal{O}\setminus I_\lambda^u$. In the next proposition, we show that when $\partial\mathcal{O}$ is a minimal universal circle we can upgrade this containment to an equality. 

\begin{figure}[h]
    \centering
    \includegraphics[width= 2.2in]{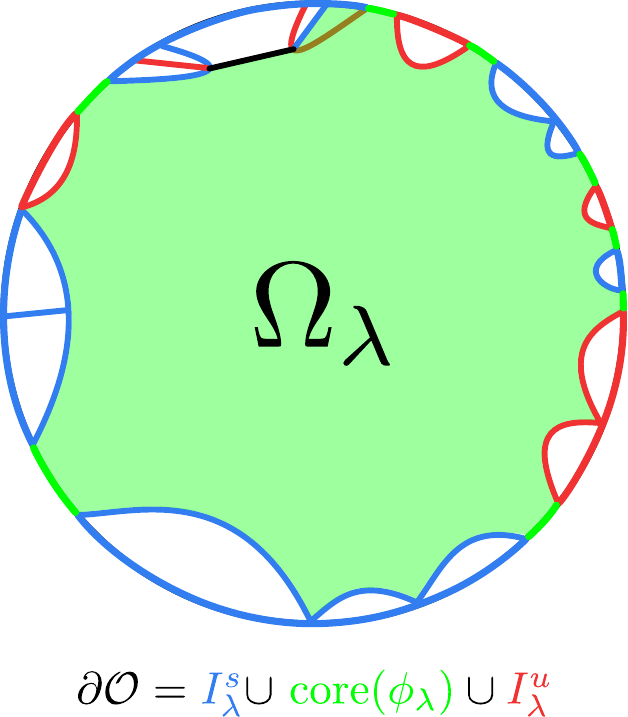}
    \caption{An illustration of Proposition \ref{core+-}. A shadow of a leaf $\lambda\in L$ with nonempty frontier. $\partial\mathcal{O}$ decomposes into the union of $I_\lambda^-$ (red), $\core(\phi_\lambda)$ (green), and $I_\lambda^+$ (blue).}
    \label{fig:propfig}
\end{figure}

One property we use to do so is the \textit{lower semicontinuity} of the family of subsets $\set{\core(\phi_\lambda)}_{\lambda\in L}$ {\cite[Lemma 2.2.6]{Calegari2006}}---that is, for any $x\in\core(\phi_\lambda)$ and any sequence of leaves $\lambda_i\to \lambda$ there exists a sequence $\set{x_i}$ such that $x_i\to x$ and $x_i\in\core(\phi_{\lambda_i})$, for each $i$. In particular, the lower semicontinuity of $\set{\core(\phi_\lambda)}_{\lambda\in L}$ implies $\core(\phi_\lambda)\subseteq \core(L^+(\lambda))$ and $\core(\phi_\lambda)\subseteq \core(L^-(\lambda))$.
\begin{prop}\label{core+-}
    Suppose that $\psi$ is a pseudo-Anosov flow on $M$ almost transverse to $\F$ and $(\partial \mathcal{O},\phi_\lambda)$ is a minimal universal circle for $\F$. For all $\lambda\in L$ we have $\core(L^+(\lambda))=\partial\mathcal{O}\setminus I_\lambda^s$ and $\core(L^-(\lambda))=\partial\mathcal{O}\setminus I_\lambda^u$.
\end{prop}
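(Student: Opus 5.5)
By Lemma \ref{coredisjointness} we already have $\core(L^+(\lambda))\subseteq \partial\mathcal{O}\setminus I_\lambda^s$, so the plan is to prove the reverse inclusion $\partial\mathcal{O}\setminus I_\lambda^s\subseteq\core(L^+(\lambda))$; the unstable statement follows by the same argument with $+$ replaced by $-$ and stable by unstable. Since $\core(L^+(\lambda))$ is closed by definition, it suffices to show that a dense subset of $\partial\mathcal{O}\setminus I_\lambda^s$ lies in $\core(L^+(\lambda))$. By lower semicontinuity we already have $\core(\phi_\lambda)\subseteq\core(L^+(\lambda))$. Using Theorem \ref{LMTthm2}, we decompose $\partial\mathcal{O}\setminus I_\lambda^s$ into two pieces. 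The first is $\core(\phi_\lambda)$. The second consists of the closures of those arcs of $\partial\mathcal{O}$ lying in gaps of $\phi_\lambda$ that are spanned by unstable frontier chains of $\Omega_\lambda$, together with the unstable portions of mixed chains that are not cut off by stable chains. So the real task is to show that each such arc $J$, bounded by an unstable frontier chain $c\subset\fr\Omega_\lambda^u$, has its points in $\core(L^+(\lambda))$.

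To handle $J$, we use the flow picture. By Proposition \ref{fenleyprop}, $\lambda$ accumulates on the lift of each unstable face in $c$ going down with the flow. Hence the orbits in the region $D_c$ cut off by $c$ (the side away from $\Omega_\lambda$) pass above $\lambda$, in the sense that leaves meeting them lie in $L^+(\lambda)$. More precisely, take a point $x\in D_c$ close to a face $f\subset c$ and its orbit. Flowing down from the leaf $\lambda$ near $\tilde f$ and using transversality, every leaf $\mu$ of $\tilde{\F}$ meeting that orbit is either $>\lambda$ or branches off above $\lambda$. In both cases $\mu\in L^+(\lambda)$, which parallels the three-case analysis of Lemma \ref{coredisjointness}. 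It follows that $D_c\subseteq\bigcup_{\mu\in L^+(\lambda)}\Omega_\mu$, because every orbit meets some leaf.

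Next we need to convert shadows covering $D_c$ into cores covering $J$, and this is where minimality enters. Suppose some open subarc $K\subset J$ missed $\core(L^+(\lambda))$. Then $K$ lies in a gap of $\phi_\mu$ for every $\mu\in L^+(\lambda)$, and since $K\subset J$ is away from $\core(\phi_\lambda)$, also in a gap of every $\phi_\mu$ with $\mu\le\lambda$ or $\mu$ incomparable on the negative side; by Lemma \ref{coredisjointness}-type separation, the latter have cores in $I^u_\lambda$ arcs other than $J$ or in $\core(L^-)$ regions. The aim is to conclude that $K$ lies in a gap of every $\phi_\mu$, $\mu\in L$, which contradicts minimality, since distinct points of $K$ would then be identified by all $\phi_\mu$. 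An alternative, more direct route is to pick a noncorner periodic point $p\in D_c$ on a regular leaf whose leaf endpoints are near a given point of $K$, using the density of Theorem \ref{Anosov-like}(2). Then take $\mu\in L^+(\lambda)$ with $p\in\Omega_\mu$ and argue via Theorem \ref{LMTthm2} that the endpoints of the leaves through $p$ cannot all be hidden in gaps of $\phi_\mu$ for suitable $\mu$. Pushing by $g$ fixing $p$, whose source–sink dynamics give $g^n\mu\in L^+(\lambda)$ if we choose $\mu$ deep in $D_c$, would then produce core points of $\phi_{g^n\mu}$ accumulating on points of $K$.

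The main obstacle is this last step: guaranteeing that the cores of $\phi_\mu$ for $\mu\in L^+(\lambda)$ actually reach every point of $J$ rather than only its endpoints. The minimality argument is my first choice. The delicate point is checking that leaves of $L^-(\lambda)$ incomparable with $\lambda$ cannot have cores inside $J$, which requires showing their shadows are separated from $D_c$ by a stable chain of $\fr\Omega_\lambda$ or by $\Omega_\lambda$ itself, using Corollary \ref{incomparableShadows}.
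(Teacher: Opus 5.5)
Your minimality argument is the paper's proof in contrapositive form. The step you flag as the main obstacle is already proved: the second assertion of Lemma \ref{coredisjointness} is exactly $\core(L^-(\lambda))\cap I^u_\lambda=\emptyset$. It covers both kinds of leaf in $L^-(\lambda)$:
\begin{itemize}
\item comparable $\mu<\lambda$, via $\Omega^u_\mu\subseteq\Omega^u_\lambda$ from Proposition \ref{NEWshadowcontainment};
\item incomparable $\mu\in L^-(\lambda)$, via Lemma \ref{branching2frontier} and Corollary \ref{incomparableShadows}, which separate $\Omega_\mu$ from $\Omega_\lambda$ by a \emph{stable} frontier component of $\Omega_\lambda$.
\end{itemize}
With that citation your contradiction closes at once. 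Take an open arc $K\subseteq J$ missing $\core(L^+(\lambda))$. It also misses $\core(\phi_\lambda)$, because $J$ lies in a gap of $\phi_\lambda$, and it misses $\core(L^-(\lambda))$ by the lemma. Being connected, $K$ then lies in a single gap of every $\phi_\mu$, so every $\phi_\mu$ is constant on $K$, contradicting minimality. The paper states this positively: minimality and lower semicontinuity give $\partial\mathcal{O}=\core(L^+(\lambda))\cup\core(L^-(\lambda))$, and removing $\core(L^-(\lambda))$ leaves $I^u_\lambda\subseteq\core(L^+(\lambda))$.

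\textbf{A false intermediate claim.} You say that leaves of $L^-(\lambda)$ incomparable with $\lambda$ ``have cores in $I^u_\lambda$ arcs other than $J$.'' By the lemma, their cores avoid all of $I^u_\lambda$: they lie beyond stable frontier components, so they are disjoint from $I^u_\lambda$. Your last paragraph describes the separation correctly, and that separation is precisely the three-case analysis in the proof of Lemma \ref{coredisjointness} with signs swapped.

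\textbf{Unneeded material.} The flow argument showing $D_c\subseteq\bigcup_{\mu\in L^+(\lambda)}\Omega_\mu$ is correct but never used. As you note, shadows covering $D_c$ say nothing about cores, since gaps can hide them. The periodic-point route is likewise unnecessary.

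\textbf{A point in your favour.} Reducing to a dense subset via closedness of $\core(L^+(\lambda))$ correctly handles the junction points where a stable and an unstable leaf slice of one frontier chain share an ideal endpoint. Such points lie in none of $I^s_\lambda$, $I^u_\lambda$, $\core(\phi_\lambda)$. The paper's decomposition $\partial\mathcal{O}=I^u_\lambda\cup\core(\phi_\lambda)\cup I^s_\lambda$ silently passes over them.
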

\begin{proof}
    We prove the claim by decomposing $\partial\mathcal{O}$ in two ways, to obtain the desired equality. Since $\partial\mathcal{O}$ is a minimal universal circle, we have that $\partial\mathcal{O} = \core(L)$. Therefore, we have that $\partial\mathcal{O} = \core(L^+(\lambda))\cup\core(L^-(\lambda))$. Moreover, we can decompose $\partial\mathcal{O}$ into the union of the core and gaps of a particular $\phi_\lambda$. Hence, by definition of $I_\lambda^s$ and $I_\lambda^u$, we have $\partial\mathcal{O} = I_\lambda^u\cup \core(\phi_\lambda)\cup I_\lambda^s$.
    
    We have $\core(L^+(\lambda))\subseteq \partial\mathcal{O}\setminus I_\lambda^s$ by Lemma \ref{coredisjointness}. We now conclude the reverse containment, $\partial\mathcal{O}\setminus I_\lambda^s \subseteq \core(L^+(\lambda))$. By lower semicontinuity of $\set{\core(\phi_\lambda)}_{\lambda\in L}$, we have $\core(\phi_\lambda)\subseteq \core(L^+(\lambda))$. By Lemma \ref{coredisjointness}, $I_\lambda^u$ is disjoint from $\core(L^-(\lambda))$. This implies $I_\lambda^u\subset\core(L^+(\lambda))$. Hence, $\partial\mathcal{O}\setminus I_\lambda^s = \core(\phi_\lambda)\cup I_\lambda^u \subseteq \core(L^+(\lambda))$, which completes the proof.
    
    The proof of $\core(L^-(\lambda))= \partial\mathcal{O}\setminus I_\lambda^u$ is analogous, swapping the roles of $(+)$ with $(-)$ and stable with unstable.
\end{proof}
From Proposition \ref{core+-}, it follows by definition of $\Lambda^\pm(\lambda)$ that the leaves of $\Lambda^+(\lambda)$ ($\Lambda^-(\lambda)$, respectively) are exactly the endpoints of stable frontier chains (unstable frontier chains, respectively).
\begin{cor}\label{orbit2calegari}
    Suppose that $\psi$ is a pseudo-Anosov flow on $M$ almost transverse to $\F$ and $(\partial \mathcal{O},\phi_\lambda)$ is a minimal universal circle for $\F$. Then for all $\lambda\in L$ we have $\Lambda^+(\lambda) = \set{\End(C)| C \subseteq \fr\Omega_\lambda^s \text{ a stable chain}}$ (respectively for $\Lambda^-(\lambda)$ using unstable chains $\fr\Omega^u$).
\end{cor}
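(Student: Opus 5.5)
By Proposition \ref{core+-}, for each $\lambda\in L$ we have $\core(L^+(\lambda))=\partial\mathcal{O}\setminus I_\lambda^s$. So the plan is to identify the boundary geodesics of the convex hull of the closed set $K:=\partial\mathcal{O}\setminus I_\lambda^s$, viewed in $\Hyp^2$ via $\partial\mathcal{O}\cong\partial\Hyp^2$. The unstable statement is identical, using $\core(L^-(\lambda))=\partial\mathcal{O}\setminus I_\lambda^u$.

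The first step is a general fact about convex hulls of closed subsets of the circle. If $K\subsetneq S^1$ is closed with at least two points, then $S^1\setminus K$ is a countable disjoint union of open arcs $J$. The boundary geodesics of $CH(K)$ are exactly the geodesics joining the two endpoints of each complementary arc $J$, provided $J$ is not the whole circle minus a point. Hence $\Lambda(K)=\set{\partial J \mid J \text{ a component of } S^1\setminus K}$.

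The second step identifies the complementary components of $K$ with the stable chains. The complementary components of $K$ are the components of $I_\lambda^s$. By definition, $I_\lambda^s$ is the union of the open arcs bounded by the stable frontier chains $C$ of $\fr\Omega_\lambda^s$. I then need to check two things.
\begin{enumerate}
\item Each chain $C$ has two distinct ideal endpoints $\End(C)$, and the arc $J_C$ it cuts off on the side away from $\Omega_\lambda^s$ is an open arc whose boundary is exactly $\End(C)$.
\item Distinct chains give disjoint arcs whose closures do not merge into a larger component. That is, two arcs $J_C$ and $J_{C'}$ never share an endpoint in a way that would make $J_C\cup J_{C'}\cup\{\text{pt}\}$ a single open component.
\end{enumerate}
The second point follows from the definition of a stable frontier chain as a connected component of the closure $\fr\Omega_\lambda^s$. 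If $J_C$ and $J_{C'}$ shared an endpoint $p$, then $C$ and $C'$ would both limit to $p$, so $C\cup C'$ would be connected in $\bar{\mathcal{O}}$ and lie in a single component. Distinct chains are also disjoint in $\partial\mathcal{O}$ because leaf slices in $\fr\Omega_\lambda$ are pairwise unlinked, being boundary pieces of the disk $\Omega_\lambda$. Consequently each chain separates off its own region of $\bar{\mathcal{O}}\setminus\overline{\Omega_\lambda^s}$.

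Finally I rule out degenerate cases. If $\fr\Omega_\lambda^s=\emptyset$, then $K=\partial\mathcal{O}$, the convex hull has no boundary geodesics, and both sides are empty. I must also exclude $K$ being a single point or empty, which would require $I_\lambda^s$ to be the whole circle minus at most a point. This is impossible because $\core(\phi_\lambda)\subseteq K$, and the core of a monotone map onto a circle is infinite.

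Combining these steps gives $\Lambda^+(\lambda)=\set{\End(C)}$. The main obstacle is the second step, specifically ruling out two stable chains accumulating at a common ideal point without being the same component. I would handle it with the closure/connectedness argument above, supplemented by Theorem \ref{LMTthm2}: each gap of $\phi_\lambda$ is spanned by a finite chain, so stable chains bounding distinct gaps are separated by core points of $\phi_\lambda$.
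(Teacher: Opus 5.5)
Your proposal is correct and follows the paper's route: the paper deduces the corollary in one line from Proposition \ref{core+-} and the definition of $\Lambda(\cdot)$ as the boundary geodesics of a convex hull, and you have spelled out the identification of the complementary arcs of $\partial\mathcal{O}\setminus I_\lambda^s$ with the arcs cut off by the stable chains. One minor caution: your closing appeal to Theorem \ref{LMTthm2} only handles stable chains in distinct gaps of $\phi_\lambda$, since two stable chains can bound parts of the same gap with an unstable frontier component between them, but your connectedness argument already covers that case, so the supplement is unnecessary.
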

It follows from this corollary that every leaf of $\Lambda^+_u$ is the pair of endpoints of a chain of stable leaves (leaves of $\Lambda^-_u$ from endpoints of chains of unstable leaves, respectively). In the proof of the main theorems, we will use Corollary \ref{orbit2calegari} to obtain a frontier chain in $\Lambda^\pm_u$ and use the $\pi_1(M)$ action on $\mathcal{O}$ to find a nearby regular leaf $l$ with endpoints $\End(l)\in \Lambda^\pm_u$.  We then conclude the main theorems by the density of $\pi_1(M)\cdot l$ and the closedness of $\Lambda^\pm_u$. In the following lemma we show how to find such a regular leaf.
\begin{lem}\label{noncornerleaf}
    Let $C\subseteq\fr\Omega_\lambda^s$ be a stable frontier chain ($C\subseteq \fr\Omega_\lambda^u$, respectively). Then there exists a stable regular leaf, $l$, (unstable, respectively) containing a noncorner periodic orbit, $o$, such that $\End(g^n\cdot C)\to \End(l)$, where $g\in stab(l)$. 
\end{lem}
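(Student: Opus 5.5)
We prove the stable case; the unstable case follows by swapping the roles of stable and unstable and replacing $g$ by $g^{-1}$. The plan is to pick an unstable leaf crossing the chain $C$, take a periodic element fixing that unstable leaf together with a nearby stable leaf, and use the contracting dynamics on the unstable leaf to push $C$ onto the stable leaf.

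\textbf{Step 1: choose an unstable leaf through $C$.} Let $C\subseteq \fr\Omega^s_\lambda$ be a stable frontier chain. Choose a face $f\subseteq C$ and a point $x\in f$. By Theorem \ref{Anosov-like}(2), leaves of $\bar\F^u$ with nontrivial stabilizer are dense. Combining this with Theorem \ref{Fenleybranching} (only finitely many orbits of branching leaves) and the fact that there are finitely many singular orbits, we pick an unstable leaf $l^u$ with these properties:
\begin{itemize}
\item $l^u$ crosses $f$ near $x$;
\item $l^u$ is regular;
\item $l^u$ is periodic, fixed by some $g\neq id$.
\end{itemize}

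\textbf{Step 2: the fixed point and the stable leaf $l$.} By Theorem \ref{Anosov-like}(1), $g$ has a unique fixed point $o\in l^u$. After replacing $g$ by $g^{-1}$ if necessary, $g$ contracts $l^u$ toward $o$ and expands the stable leaf $l$ through $o$. We want $o$ to be a noncorner point and $l$ to be regular. Leaves fixed by $g$ are finite in number (Theorem \ref{Fenleybranching}). So we can perturb the choice of $l^u$ and $g$ within a dense set so that:
\begin{itemize}
\item $l$ avoids the countably many singular and branching leaves;
\item $g$ fixes no lozenge corner other than $o$.
\end{itemize}
For the second condition, the plan is to choose $g$ with no perfect fits at $o$. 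If that is not always possible, we replace $g$ by a suitable power and use that there are only finitely many lozenges at $o$ together with the density argument. With these choices, $g$ acts on $\partial\mathcal{O}$ with two attracting points, namely $\End(l)$, and two repelling points, namely $\End(l^u)$.

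\textbf{Step 3: convergence of $g^n\cdot C$.} The chain $C$ is a connected union of stable slices crossing $l^u$ at a point $y\neq o$. Under $g^n$, the crossing point $g^n y\to o$ along $l^u$, and $g^n\cdot C$ is a stable chain through $g^n y$. The main obstacle is to show that the endpoints of $g^n\cdot C$ converge to $\End(l)$ and not to some other limit. A sequence of stable chains crossing $l^u$ at points converging to $o$ could instead accumulate on a union of leaves nonseparated from $l$, or on a chain containing $l$.

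We plan to rule this out using regularity of $l$. Since $l$ is not nonseparated, stable leaves through points of $l^u$ converging to $o$ converge to $l$ alone, in the Hausdorff sense in $\bar{\mathcal{O}}$. By the convergence property of the compactification, their ideal endpoints then converge to $\End(l)$. A chain may contain additional slices sharing ideal points, but each of these gets pushed by $g^n$ toward the attracting points of $g$ on $\partial\mathcal{O}$, which are exactly $\End(l)$. Alternatively, one can argue directly: $\End(C)$ avoids the two repelling points $\End(l^u)$, because $l^u$ crosses $C$ and so its endpoints lie on opposite sides of $C$, and are therefore not endpoints of $C$. Source–sink dynamics then forces each endpoint of $g^n\cdot C$ to converge to a point of $\End(l)$. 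Since $l^u$ separates the two endpoints of $C$, and $g$ preserves the sides of $l^u$, the two endpoints converge to the two distinct points of $\End(l)$. Hence $\End(g^n\cdot C)\to\End(l)$ with $g\in stab(l)$, as required.
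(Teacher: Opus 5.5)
Your proposal is correct and is essentially the paper's proof: take a nonsingular noncorner periodic point $o$ on a regular stable leaf $l$ whose unstable leaf $l^u$ crosses $C$. Then the endpoints of $C$ lie in the two different basins cut out by the repelling pair $\End(l^u)$, and the source--sink dynamics of $g$ on $\partial\mathcal{O}$ sends them to the two attracting points $\End(l)$; your ``alternative'' argument in Step 3 is exactly the paper's.

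The one step to tighten is Step 2, because your perturbation heuristics do not by themselves produce such an $o$. Countability does not let you avoid the bad leaves, and passing to a power of $g$ only enlarges its fixed set, so it cannot remove lozenge corners. Instead, do what the paper does: pick $o$ first, near $C$, using the density of noncorner periodic orbits, and let $l^u$ be its unstable leaf.
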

\begin{proof}
    We prove the lemma for $C$ a stable frontier chain. Let $\End(C)=\set{c_1,c_2}$ such that the ideal arc $(c_1,c_2)\subset I_\lambda^s$. By the density of noncorner periodic orbits, there exists a noncorner periodic orbit $o$ contained in $\Omega_\lambda$ and contained in a stable regular leaf $l$. Moreover, we can choose $o$ close to $C$ such that the unstable leaf through $o$, $l^u$, intersects $C$. Let $\End(l)=\set{a_1,a_2}$ such that $(c_1,c_2)\subset (a_1,a_2)$ and let $\End(l^u)=\set{b_1,b_2}$ such that $b_1\in (c_1,c_2)$. Since $o$ is a noncorner periodic orbit, we can take $g\in stab(o)$ such that $a_1$ and $a_2$ are attracting fixed points and $b_1$ and $b_2$ are repelling fixed points under iteration by $g$. Then $g^n\cdot c_1 \to a_1$ and $g^n\cdot c_2\to a_2$ in $\partial\mathcal{O}$. Thus, we have $\End(g^n\cdot C)\to \End(l)$.
\end{proof}

Lemma \ref{noncornerleaf} proves that if there exists a $\lambda\in L$ such that $\fr\Omega_\lambda^s\neq \emptyset$ ($\fr\Omega_\lambda^u\neq \emptyset$, respectively), then there exists a stable (unstable, respectively) regular leaf with its endpoints in $\Lambda^+_u$ ($\Lambda^-_u$, respectively). We can now use the orbit of this regular leaf under the action of $\pi_1(M)$ to conclude that any leaf (or face of a leaf) with branching on one side has endpoints in $\Lambda^\pm_u$.

\begin{lem}\label{regularleafdyn}
    Suppose that $l$ is a stable regular leaf (unstable, respectively) such that $\End(l)\in \Lambda^+_u$ ($\End(l)\in \Lambda^-_u$, respectively). Then $\End(f)\in  \Lambda^+_u$ for any face $f$ of a stable leaf $l'$ without branching on both sides ($\End(f)\in  \Lambda^-_u$ for $f\subseteq l'$ unstable, respectively).
\end{lem}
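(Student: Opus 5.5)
Since $\Lambda^+_u$ is closed and $\pi_1(M)$-invariant (Theorem \ref{CalegariLamination}), every pair $\End(g\cdot l)$ with $g\in\pi_1(M)$ lies in $\Lambda^+_u$, and so does every limit of such pairs in the space of unordered pairs of distinct points of $\partial\mathcal{O}$. The plan is therefore to show that for every face $f$ of a stable leaf $l'$ without branching on both sides, there is a sequence $g_n$ with $g_n\cdot l$ converging to $f$ in $\mathcal{O}$, and that this convergence forces $\End(g_n\cdot l)\to\End(f)$ on $\partial\mathcal{O}$. I treat only the stable case; the unstable case is the same with the foliations swapped.

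First, pick a point $x$ in the interior of $f$, and choose the side of $f$ on which we approach. If $f$ has no branching at all, either side will do. If $f$ branches on exactly one side, we approach from the other side, the one containing no leaves nonseparated from $f$. In the singular case, take the side where the complement of $f$ contains only the single unstable half leaf at the prong; for a face this is the side containing no other prongs of $l'$.

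Next, take a small transverse unstable segment $\tau$ starting at $x$ and going into the chosen side. Because $\pi_1(M)\cdot l$ is dense in $\mathcal{O}$ (noted after Theorem \ref{Anosov-like}), there are translates $g_n\cdot l$ crossing $\tau$ at points $x_n\to x$. Since there is no branching on this side, the stable leaves through $x_n$ converge to $l'$, and their limit in the leaf space is exactly $l'$ and no other leaf. Using product charts along $f$, the $g_n\cdot l$ converge to $f$ uniformly on compact subsets of $f$. Each $g_n\cdot l$ is regular and nonsingular, so it contains a single slice.

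It remains to prove that the endpoints converge: $\End(g_n\cdot l)\to\End(f)$. I expect this step to be the main obstacle, because convergence on compact sets does not by itself control the ideal endpoints. To handle it, fix an end $e$ of $f$. Choose a neighborhood $U$ of $e$ in $\bar{\mathcal{O}}$ whose boundary is given by an unstable leaf crossing $f$ far out along the ray, together with nearby stable and unstable leaves, as in the basis of neighborhoods from the Fenley--Bonatti compactification. That unstable leaf also crosses $g_n\cdot l$ for large $n$ by the uniform convergence. Once past this crossing, the ray of $g_n\cdot l$ cannot leave $U$: doing so would require crossing an unstable leaf twice, or crossing a stable leaf, which is impossible. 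The only other way out would be a branching phenomenon on the approach side, where the $g_n\cdot l$ follow $f$ and then veer off along a nonseparated leaf; this is exactly what the one-sided hypothesis rules out. A perfect fit between $f$ and some unstable leaf on the approach side is the remaining concern. I would handle it by noting that a perfect fit still sends nearby stable leaves' ends close to the common ideal point of $f$, so the endpoints still converge.

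From this, $\End(g_n\cdot l)\to\End(f)$, and closedness of $\Lambda^+_u$ gives $\End(f)\in\Lambda^+_u$.
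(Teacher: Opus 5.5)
Your proposal is correct and follows the same route as the paper. Both arguments use density of $\pi_1(M)\cdot l$, approximate $f$ by translates of $l$ from the side with no branching, deduce $\End(g_n\cdot l)\to\End(f)$, and conclude by closedness and $\pi_1(M)$-invariance of $\Lambda^+_u$. The paper simply asserts the endpoint convergence from the fact that the approximating leaves converge only to $f$ in $\overline{\mathcal{O}}$; your neighborhood and perfect-fit discussion is extra justification of that step, still at the level of a sketch.
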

\begin{proof}
We prove the claim in the case where $l$ is a stable leaf. The proof when $l$ is an unstable leaf is the same, replacing $(+)$ with $(-)$. Since $\psi$ is a transitive pseudo-Anosov flow, we have that the orbit $\pi_1(M)\cdot l$ is dense in $\mathcal{O}$. By Theorem \ref{CalegariLamination}, $\Lambda^+_u$ is $\pi_1(M)$-invariant and closed, so we have $\overline{\pi_1(M)\cdot\End(l)}\subset \Lambda^+_u$. 

If $l'\in \pi_1(M)\cdot l$, then $l'$ is also regular and we have proven the claim. Suppose that $f\subset l'$ for $l'$ not in $\pi_1(M)\cdot l$. Then there exists a sequence of leaves $\set{l_n}\subset\pi_1(M)\cdot l$ converging to $f$ in the Hausdorff topology on $\overline{\mathcal{O}}$. By assumption, $f$ has branching on only one side. Since $\pi_1(M)\cdot l$ is dense in $\mathcal{O}$, we can take $\set{l_n}$ to be on the side of $f$ containing no branching. 
Hence, $\set{l_n}$ converge only to $f$ in the Hausdorff topology on $\overline{\mathcal{O}}$. Since each $\End(l_n)\subset \Lambda^+_u$ and $\End(l_n)$ converges to $\End(f)$ in $\partial \mathcal{O}$, we conclude $\End(f)\in \Lambda^+_u$. This completes the proof of the claim.
\end{proof}

\subsection{Proof of Main Theorems}
We now prove the first of our main theorems. We first note that $\partial\mathcal{O}^{s,u}$ is closed, and hence forms a lamination of $\partial\mathcal{O}$, if and only if $\bar\F^{s,u}$ has Hausdorff leaf space \cite{BarthelmeBonattiMann2025}.
    
\begin{proof}[Proof of Theorem \ref{HausdorffThm}]
We  prove the theorem for $\bar\F^s_\psi$ Hausdorff. The argument for $\bar\F^u_\psi$ is analogous, switching the roles of $(+)$ with $(-)$ and $\bar\F^s_\psi$ with $\bar\F^u_\psi$. We first show that $\partial \mathcal{O}^s\subseteq \Lambda^+_u$. By assumption, there exists $\lambda\in L$ such that $\fr\Omega_\lambda$ contains a stable leaf slice. Since $\bar\F^s_\psi$ is Hausdorff, any stable frontier chain $C\subseteq \fr\Omega_\lambda^s$ consists of a single face of a leaf. By Corollary \ref{orbit2calegari} we have $\End(C)\in \Lambda^+_u$. Then, applying Lemma \ref{noncornerleaf} using $C$, we have that there exists a stable regular leaf $l$ such that $\End(l)\in \Lambda^+_u$. Since $\bar{\F}^s_\psi$ is Hausdorff, applying Lemma \ref{regularleafdyn} using $l$ yields that for any leaf $l'\in \bar{\F}^s_\psi$ and any face $f\subseteq l'$ we have $\End(f)\in \Lambda^+_u$. Therefore, we conclude $\partial\mathcal{O}^s\subseteq \Lambda^+_u$.

To complete the proof of this theorem we now show that $\Lambda^+_u\subseteq \partial\mathcal{O}^s$. Since $\bar\F^s_\psi$ is Hausdorff, every stable frontier chain $C\subseteq\fr\Omega_\lambda^s$ consists of a single leaf slice. Then, by Corollary \ref{orbit2calegari}, we have that $\Lambda^+(\lambda)\subset \partial\mathcal{O}^s$ for all $\lambda\in L$. Since $\bar\F^s_\psi$ is Hausdorff, $\partial\mathcal{O}^s$ is a lamination and hence closed. Therefore, $\Lambda^+_u = \overline{\bigcup_{\lambda\in L}\Lambda^+(\lambda)}\subseteq \partial\mathcal{O}^s$. 
\end{proof}

In the case where $\bar\F^s_\psi$ is non-Hausdorff ($\bar\F^u_\psi$, respectively), $\partial\mathcal{O}^s$ ($\partial\mathcal{O}^u$, respectively) is not closed. Therefore, we can not have equality as in the case of Theorem \ref{HausdorffThm}. In the case where $\F^s_\psi$ ($\F^u_\psi$, respectively) has leaves branching on one side only, we can recover one containment.

\begin{proof}[Proof of Theorem \ref{OneSided}]
    We  prove the theorem for $\bar\F^s_\psi$. The argument for $\bar\F^u_\psi$ is analogous, switching the roles of $(+)$ with $(-)$ and $\bar\F^s_\psi$ with $\bar\F^u_\psi$, and follows a similar argument to the proof of Theorem \ref{HausdorffThm}. 

    By assumption, there exists $\lambda\in L$ such that $\fr\Omega_\lambda$ contains a stable leaf slice. Hence there exists $C\subseteq \fr\Omega_\lambda^s$ a stable frontier chain. By Corollary \ref{orbit2calegari}, we have that $\End(C)\in \Lambda^+_u$. Then, applying Lemma \ref{noncornerleaf} using $C$, we have that there exists a regular leaf $l$ such that $\End(l)\in \Lambda^+_u$. Since every leaf of $\bar\F^s_\psi$ has branching on only one side, applying Lemma \ref{regularleafdyn} to $l$ yields that for any leaf $l'\in \bar{\F}^s_\psi$ and any face $f\subseteq l'$ we have $\End(f)\in \Lambda^+_u$. Therefore, we conclude $\partial\mathcal{O}^s\subset \Lambda^+_u$. 

    To see that the containment is strict we observe that since $\pi_1(M)\cdot l$ is dense in $\mathcal{O}$, any chain of leaves $C'$ which is the union of all branching leaves in a cataclysm of leaves, is contained in the orbit closure. Hence, $\End(C')\in \Lambda^+_u$, but $\End(C')\not\in \partial\mathcal{O}^s$. 
\end{proof}
It follows as a corollary that we can also obtain that the closures of $\partial\mathcal{O}^{s,u}$ are also contained in $\Lambda^\pm_u$. In Section \ref{sec:Example}, we give an example of a taut foliation and a transverse pseudo-Anosov flow with the property that $\overline{\partial\mathcal{O}^s}\subsetneq \Lambda^+_u$.
\begin{proof}[Proof of Corollary \ref{OneSidedCor}]
     By Theorem \ref{OneSided}, we have $\partial\mathcal{O}^s\subsetneq \Lambda^+_u$. Since $\Lambda^+_u$ is closed, $\Lambda^+_u$ contains the closure of $\partial\mathcal{O}^s$ in the space of distinct unordered pairs of points in $\partial\mathcal{O}$. By definition this closure is $\overline{\partial\mathcal{O}^s}$, so $\overline{\partial\mathcal{O}^s}\subseteq \Lambda^+_u$. The same proof holds to show $\overline{\partial\mathcal{O}^u}\subseteq \Lambda^-_u$.
\end{proof}
Finally, we prove Theorem \ref{TwoSided}--the case where $\bar\F^s_\psi$ ($\bar\F^u_\psi$, respectively) has leaves branching on both sides. The proof method used in Theorem \ref{OneSided} applies analogously in this case.
\begin{proof}[Proof of Theorem \ref{TwoSided}]
    We  prove the theorem for $\bar\F^s_\psi$. The argument for $\bar\F^u_\psi$ is analogous, switching the roles of $(+)$ with $(-)$ and $\bar\F^s_\psi$ with $\bar\F^u_\psi$. By assumption, there exists $\lambda\in L$ such that $\fr\Omega_\lambda$ contains a stable leaf slice. Hence there exists $C\subseteq \fr\Omega_\lambda^s$ a stable frontier chain. By Corollary \ref{orbit2calegari}, we have that $\End(C)\in \Lambda^+_u$. Then, applying Lemma \ref{noncornerleaf} to $C$, we have that there exists a regular leaf $l$ such that $\End(l)\in \Lambda^+_u$. Applying Lemma \ref{regularleafdyn} to $l$ yields that for any face $f\subseteq l'$ in a stable leaf with branching on one side, we have $\End(f)\in \Lambda^+_u$. By Theorem \ref{Fenleybranching}, there are only finitely many $\pi_1(M)$-orbits of stable leaves with branching on both sides. Therefore, we have that all  but finitely many $\pi_1(M)$-orbits of leaves in $\partial\mathcal{O}^s$ are contained in $\Lambda^+_u$, as desired.
\end{proof}

\section{Reconstructing $\Lambda^\pm_u$}\label{sec:Reconstruction}

In this section we provide a proof of Theorem \ref{reconstruct}. We show that for any action $\pi_1(M)\curvearrowright S^1$ coming from the orbit space of a pseudo-Anosov flow there is a finite set $S$ of lamination pairs such that any minimal orbit space universal circle with the same action defines a lamination pair $(\Lambda^+_u, \Lambda^-_u)\in S$. By a result of Barthelm\'e, Bonatti, and Mann \cite[Theorem B]{BarthelmeBonattiMann2025} if $\pi_1(M)\curvearrowright S^1$ comes from an orbit space of a pseudo-Anosov flow we can uniquely reconstruct the stable and unstable foliations of the associated pseudo-Anosov flow. Combining this result with the main theorems yields most of the leaves in $\Lambda^+_u$ and $\Lambda^-_u$. A class of leaves which are not described by the $\pi_1(M)$ action are \textit{diagonal leaves}:
\begin{defn}
    Let $C$ be a cataclysm of leaves in $\bar{\F}^s_\psi$ ($\bar{\F}^u_\psi$, respectively). A diagonal leaf in $\Lambda^+_u$ ($\Lambda^-_u$, respectively) is a leaf $\End(C')$, where $C'\subsetneq C$ and leaves in $C'$ share consecutive endpoints.
\end{defn}
See Figure \ref{fig:diagshadow} for an illustration of diagonal leaves. By Corollary \ref{orbit2calegari}, the leaves of $\Lambda^+(\lambda)$ and $\Lambda^-(\lambda)$ are the stable and unstable frontier chains of $\Omega_\lambda$, for each $\lambda$. This implies that the only leaves of $\Lambda^+_u$ and $\Lambda^-_u$ which are not contained in $\overline{\partial\mathcal{O}^s}$ and $\overline{\partial\mathcal{O}^u}$, respectively, are diagonal leaves. We prove Theorem \ref{reconstruct} by showing that given $\partial\mathcal{O}^s$ there are finitely many choices of diagonal leaves for $\Lambda^+_u$ and $\Lambda^-_u$. 
\begin{proof}[Proof of Theorem \ref{reconstruct}]
    We prove the claim for $\Lambda^+_u$. An analogous argument holds for $\Lambda^-_u$ replacing $\bar\F^s_\psi$ with $\bar\F^u_\psi$.
    
    Suppose $\rho:\pi_1(M)\to \Homeo^+(S^1)$ is an action coming from the orbit space of a pseudo-Anosov flow. Then by \cite[Theorem B]{BarthelmeBonattiMann2025} we can uniquely reconstruct $\bar{\F}^s_\psi$ from the action $\pi_1(M)\curvearrowright S^1$. If $\bar\F^s_\psi$ is Hausdorff, then by Theorem \ref{HausdorffThm}  we have  $\Lambda^+_u=\partial\mathcal{O}^s$. 
    
    Suppose $\bar\F^s_\psi$ is non-Hausdorff. By Theorem \ref{OneSided} and Theorem \ref{TwoSided} we have that every face of every leaf which is separated or branching on one side only has endpoints in $\Lambda^+_u$. Moreover, for any cataclysm $C$ in $\bar\F^s_\psi$, we have $\End(C)\in \overline{\partial\mathcal{O}^s}$ which implies $\End(C)\in \Lambda^+_u$.
    
    By Corollary \ref{orbit2calegari}, for all $\lambda\in L$, every leaf of $\Lambda^+(\lambda)$ is the endpoints of a stable frontier chain in $\fr\Omega_\lambda^s$. This implies that the only leaves in $\Lambda^+_u$ not in $\overline{\partial\mathcal{O}^s}$ are diagonal leaves. We now show that given the reconstructed ${\partial\mathcal{O}^s}$ there are finitely many choices of diagonal leaves for $\Lambda^+_u$ and hence finitely many laminations for $\Lambda^+_u$. 
    
    Let $C$ be a cataclysm of leaves in $\bar\F^s_\psi$ and let $P_C\subset\Lambda^+_u$ be the collection of leaves coming from each leaf in $C$ and $\End(C)$. Since every branching leaf has nontrivial stabilizer, by Theorem \ref{Fenleybranching} $P_C$ is a finite sided polygon. Therefore, there are finitely many choices of diagonal leaves in $P_C$. This choice determines the diagonal leaves for any representative in $\pi_1(M)\cdot P_C$. Hence there are finitely many choices of diagonal leaves for an orbit $\pi_1(M)\cdot P_C$. By Theorem \ref{Fenleybranching}, there are finitely many orbits of cataclysms in $\bar\F^s_\psi$. Since there are finitely many choices in each of these orbits, there are at most finitely many laminations for $\Lambda^+_u$ coming from the action $\rho$. 
    
    By repeating the same argument using $\partial\mathcal{O}^u$, there are finitely many laminations $\Lambda^-_u$ coming from $\rho$. Enumerating over all pairs $(\Lambda^+_u,\Lambda^-_u)$, we conclude there is a finite set $S$ of lamination pairs such that any minimal orbit space universal circle $S^1_u$ with action $\rho$ determines a lamination pair $(\Lambda^+_u,\Lambda^-_u)\in S$.
\end{proof}
\section{Sharpness of Main Theorems}\label{sec:Example}
The goal of this section is to construct an example of a pseudo-Anosov flow $\psi$ and a transverse taut foliation $\F$ on an atoroidal three-manifold $M$ such that $\overline{\partial\mathcal{O}^s}\subsetneq \Lambda^+_u$. The construction provided in this section is flexible and can be used to obtain a family of pseudo-Anosov flows and foliations on distinct hyperbolic three-manifolds, exhibiting the same property. As argued in the proof of Theorem \ref{reconstruct}, Corollary \ref{orbit2calegari} implies that the only leaves in $\Lambda^+_u$ which are not in $\partial\mathcal{O}^s$ are diagonal leaves. Therefore, we construct $\psi$ and $\F$ such that $\Lambda^+_u$ contains a diagonal leaf.

\begin{figure}[h]
    \centering
    \includegraphics[width= 6in]{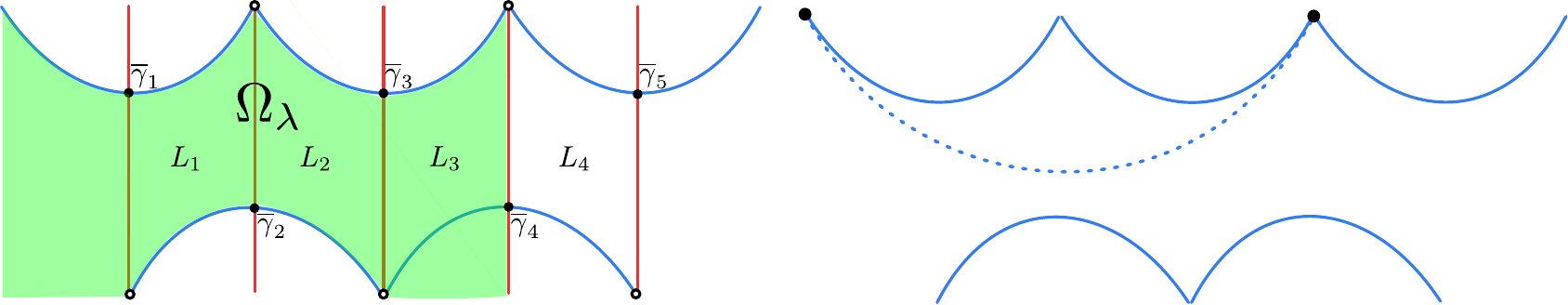}
    \caption{Left: The shadow of $\Omega_\lambda$ contains exactly three lozenges from a line of four lozenges. Since $\fr\Omega_\lambda$ contains a stable frontier chain (top) consisting of exactly two of three branching leaves, $\Lambda^+_u$ contains a diagonal leaf. Right: An illustration of a diagonal leaf (dotted) in $\Lambda^+_u$ spanning two of three leaves in a cataclysm of branching leaves}
    \label{fig:diagshadow}
\end{figure}

\begin{thm}\label{examplethm}
    There exists a closed atoroidal three-manifold $M$ admitting a pseudo-Anosov flow $\psi$ and taut foliation $\F$ such that $\psi$ and $\F$ are transverse, $\partial \mathcal{O}$ is a minimal universal circle for $\F$, and $\Lambda^+_u$ contains a diagonal leaf.
\end{thm}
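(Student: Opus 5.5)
We construct the example from a finite depth foliation transverse to a pseudo-Anosov flow, using the Gabai--Mosher/Landry--Minsky--Taylor picture of depth-one foliations. The aim is to arrange that some leaf $\lambda\in L$ has a shadow $\Omega_\lambda$ that sees only part of a cataclysm, as in Figure \ref{fig:diagshadow}. By Corollary \ref{orbit2calegari}, the leaves of $\Lambda^+(\lambda)$ are exactly the endpoint pairs of the stable frontier chains of $\Omega_\lambda$. It therefore suffices to produce a leaf $\lambda$ and a stable frontier chain $C\subseteq\fr\Omega_\lambda^s$ with two properties: $C$ is a proper union of at least two branching leaves in a cataclysm $C_0$ of $\bar\F^s$, and consecutive leaves of $C$ share ideal endpoints. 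Then $\End(C)$ is a diagonal leaf lying in $\Lambda^+(\lambda)\subseteq\Lambda^+_u$.

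\textbf{Step 1: the manifold and flow.} Start with a fibered hyperbolic manifold $N$ with monodromy a pseudo-Anosov $h$ of a surface $\Sigma$, and take the suspension flow. Inside $N$ we choose a closed orbit $\gamma$ and perform a (Fried/Goodman) surgery along $\gamma$, or equivalently choose $N$ with a second fibered face. The goal is a flow $\psi$ on an atoroidal $M$ whose orbit space contains a line of at least four lozenges, with three consecutive stable branching leaves forming a cataclysm. Lines of lozenges arise from freely homotopic closed orbits, and these can be created by doing surgery on two parallel orbits with appropriate slopes. We keep $M$ hyperbolic by taking generic surgery slopes and invoking Thurston's hyperbolic Dehn surgery theorem.

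\textbf{Step 2: the foliation.} Build $\F$ by spinning a Thurston-norm-minimizing surface $S$ in $M$, dual to a class in the cone over a fibered face for $\psi$, along a transverse torus or annulus. This gives a depth-one foliation with $S$ as its compact leaf, transverse to $\psi$. The flow then crosses $S$, and the depth-one leaves spiral onto $S$. We choose the surface $S$ to be punctured by the closed orbits corresponding to the two ends of the lozenge line, but not by the middle orbit.

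\textbf{Step 3: identify the shadow.} Fix a lift $\lambda$ of a depth-one leaf, or of $S$, and apply Proposition \ref{fenleyprop} to compute $\Omega_\lambda$. The target is that $\Omega_\lambda$ is exactly the union of three consecutive lozenges of the line of four, together with the appropriate shared sides. Its top frontier is then a stable chain consisting of two of the three branching leaves of $C_0$, as in Figure \ref{fig:diagshadow}.

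\textbf{Step 4: minimality.} Check minimality of $\partial\mathcal{O}$ using Theorem \ref{LMTmin}. Since $\F$ has a compact leaf, it is not $\R$-covered, so $\partial\mathcal{O}$ is automatically minimal.

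\textbf{Main obstacle.} The hardest step is Step 3: controlling precisely which lozenges the shadow of a lift of $S$ meets. The approach is to use the description of $\Omega_\lambda$ for a fibered (or depth-one) leaf. The shadow of a lift of a cross-section is all of $\mathcal{O}$ minus the lozenges whose corner orbits are disjoint from $S$. The frontier of $\Omega_\lambda$ consists of leaves through the orbits that $S$ misses. So the requirement is that exactly one corner orbit, at one end of the line of four lozenges, avoids $S$. We then verify that the resulting stable frontier chain stops after two branching leaves, using the intersection numbers of the closed orbits with $[S]$. If computing this directly proves hard, the fallback is an explicit branched-surface or veering-triangulation model, in which such shadows can be read off combinatorially.
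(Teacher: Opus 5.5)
Your reduction matches the paper's. By Corollary \ref{orbit2calegari}, it suffices to exhibit a leaf $\lambda$ whose stable frontier chain is a proper subchain, of at least two leaves, of a cataclysm. The gap is that the construction, which is the whole content of the theorem, is not carried out, and the mechanism you propose cannot work.

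\textbf{The framework is inconsistent.} If $[S]$ lies in the cone over a fibered face for $\psi$, then $S$ is a cross-section. A flow with a cross-section has no lozenges with periodic corners (by the sign argument below), indeed no perfect fits. Its leaf spaces are then Hausdorff, and Theorem \ref{HausdorffThm} already forces $\Lambda^+_u=\partial\mathcal{O}^s$. Moreover, a lift of a cross-section meets every orbit, so its shadow is all of $\mathcal{O}$ with empty frontier.

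\textbf{You cannot choose which corner orbits $S$ meets.} Let $S$ be any closed surface positively transverse to $\psi$, such as a compact leaf of $\F$. Then $\#(\gamma\cap S)=\langle[S],[\gamma]\rangle\ge 0$ for every closed orbit $\gamma$. The two corners of a lozenge are fixed by a common $g$, which translates one corner orbit $\alpha$ forward and the other, $\beta$, backward. Hence $\alpha^n$ is freely homotopic to $\beta^{-m}$ for some $n,m\ge 1$, which forces $\langle[S],[\alpha]\rangle=\langle[S],[\beta]\rangle=0$. By induction along the line, every corner orbit misses $S$. So neither ``$S$ punctured by the end orbits but not the middle one'' nor ``exactly one corner orbit avoids $S$'' can be arranged. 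Your formula ``$\Omega=\mathcal{O}$ minus the lozenges whose corners miss $S$'' is also false: in the paper's example, $\Omega_{\tilde\lambda_3}$ contains three open lozenges but none of their corners.

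\textbf{Step 1 gives no control.} The surgery says nothing about the length of the chain, whether it is a line, which sides are shared, the size of the cataclysm, or transversality to the foliation built afterwards.

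\textbf{What is missing} is a way to control, for a specific noncompact leaf, which stable and unstable leaves through the corner orbits lie in its frontier and which half-leaves lie in its shadow. The paper gets this from an explicit local model. Five round handles with alternating time orientation, each foliated by a stack of chairs, are glued in a chain into a block $B$. The gluing makes the annular leaf $\lambda_j$ meet $W^u(\gamma_k)$ for all $k\le j$ and $W^s(\gamma_{j+1})$ (Observation \ref{constructionobs}). The block is glued to a foliated product, and the boundary is closed up by a binding, efficiently intersecting map. Gabai--Mosher then yield an atoroidal $M$ with a transverse pseudo-Anosov $\psi$. The $\gamma_i$ lie off the compact leaf, consistent with the obstruction above. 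A homology count shows their free homotopy class is exactly $\{\gamma_1,\dots,\gamma_5\}$. Lemma \ref{lozengelemma}, plus the fact that shadows are open disks containing no $\bar\gamma_i$, then pins down a line of four lozenges sharing unstable sides. This line satisfies $L_1\cup L_2\cup L_3\subset\Omega_{\tilde\lambda_3}$ and $L_4\cap\Omega_{\tilde\lambda_3}=\emptyset$.

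(Minor: in Step 4, having a compact leaf does not by itself rule out $\R$-covered; a fibration is a counterexample. Minimality follows more cleanly from Theorem \ref{LMTmin}, since a flow with a finite free homotopy class of periodic orbits is not skew.)
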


The approach used to construct $M$, $\psi$, and $\F$ was suggested to the author by Sam Taylor and Michael Landry. The proof of Theorem \ref{examplethm} follows a special case of a construction introduced by Gabai and Mosher, which is outlined in \cite{MR1184414}. This construction is fully developed in upcoming work of Michael Landry and Chi Cheuk Tsang \cite{LandryTsangPrep}. The construction relies on a building block called a \textit{round handle}, which contains a semiflow and a transverse foliation. By gluing a collection of round handles appropriately, one can arrange for certain chains of lozenges $\set{L_i}_{i=1,\ldots,n}$ in the orbit space of $\psi$. Furthermore, one can produce a leaf $\lambda\in L$ whose shadow contains a strict subchain of $\set{L_i}$. We show that $\psi$ and $\F$ can be constructed such that $\mathcal{O}$ contains a line of four lozenges $\set{L_i}_{i=1,\ldots,4}$ sharing sides in $\bar\F_\psi^u$ and such that there exists a leaf $\lambda$ whose shadow contains $L_i\subset\Omega_\lambda$, for $i=1,2,3$, and is disjoint from $L_4$. Such a $\psi$ and $\F$ give rise to a diagonal leaf in $\Lambda^+_u$. Therefore,  $\overline{\partial\mathcal{O}^s}\subsetneq \Lambda^+_u$ as desired.

We first introduce round handles and then proceed with the proof of Theorem \ref{examplethm}.
\subsection{Round Handles}
Consider the flow $\psi_0$ on $\R^3$ given by $\psi_0^t (x,y,z) = (\lambda^tx,\lambda^{-t}y,z+t)$. The $\Z$-action on $\R^3$ by translation in the $z$-coordinate is free, properly discontinuous, cocompact, and commutes with $\psi_0$. Let $C= \set{\pm 1}\times \set{\pm 1} \times \R$ and let $U_i$ be a small tubular neighborhood around a connected component $C_i\subset C$ such that $\partial(U\cap [-1,1]^2\times \R)$ is everywhere tangent to $\psi_0$, where $U=\cup_i U_i$. Therefore, $H:=\left(([-1,1]^2\times \R)\setminus U\right) / \Z$ is a solid torus and $\psi_0$ descends to a semiflow on $H$, denoted by $\psi_H$. 
\begin{defn}
    The solid torus $H$ equipped with the semiflow $\psi_H$ is called a \textit{round handle}.
\end{defn} 
We now note some dynamical properties of $\psi_H$. Observe that $\partial H$ decomposes into eight annuli, four annuli where $\psi_H$ is transverse to $\partial H$ and four annuli where $\psi_H$ is tangent to $\partial H$. Moreover, $\psi_H$ is inward pointing on two of the annuli and is outward pointing on the other two. The collection of inward and outward pointing annuli are denoted by $R^-(H)$ and $R^+(H)$, respectively, and the collection of annuli where $\psi_H$ is tangent is denoted by $A(H)$. On each annulus of $A(H)$, $\psi_H$ restricts to the product flow. Finally, observe that $\psi_H$ has a single hyperbolic periodic orbit, $\gamma$, contained in the interior of $H$, which is homotopic to the core of $H$. Let $W^s(\gamma)$ and $W^u(\gamma)$ denote the stable and unstable leaves of $\gamma$. By construction, $W^s(\gamma)$ intersects $R^-(H)$ while $W^u(\gamma)$ intersects $R^+(H)$. All other orbits of $\psi_H$ intersect a component of $R^+(H)$ and a component of $R^-(H)$ exactly once.

We further equip a round handle $H$ with a codimension-one foliation, $\F_H$, by a ``stack of chairs". For the construction of a foliation by a stack of chairs we refer the reader to {\cite[Example 4.19]{Calegari2007}} or {\cite[Example 5.2]{LandryTsang2025}}. Each leaf of $\F_H$ is a disk whose boundary accumulates on $R^+(H)$ and $R^-(H)$, and intersects each component of $A(H)$. By construction, $\psi_H$ is transverse to $\F_H$. See Figure \ref{fig:roundhandle} for an illustration of $H$ equipped with $\psi_H$ and $\F_H$.

\begin{figure}[H]
    \centering
    \includegraphics[width= 3.8in]{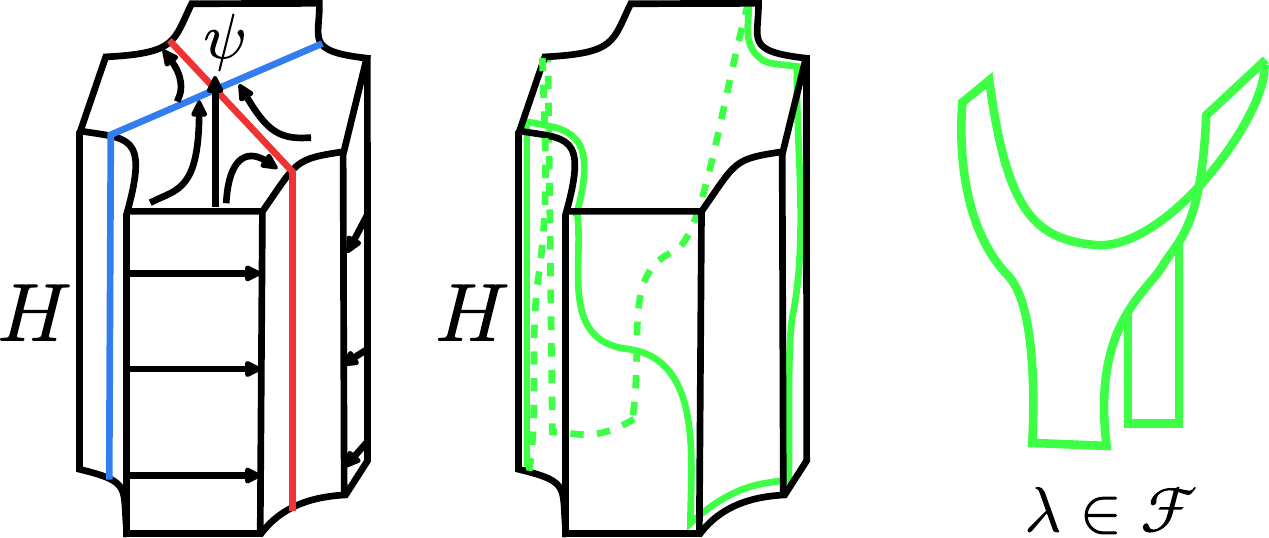}
    \caption{A round handle with its associated semiflow $\psi_H$ and foliation $\F_H$ by a stack of chairs.}
    \label{fig:roundhandle}
\end{figure}

\subsection{Proof of Theorem \ref{examplethm}}
Before proceeding to the proof of Theorem \ref{examplethm}, we prove a lemma which allows us to conclude that a lozenge is contained in the shadow of a leaf given that one of its sides is contained in the frontier. Note that we adopt the convention that a lozenge is an open rectangle in $\mathcal{O}$ (see {\cite[Section 2.4]{BarthelmeMann2026}} for an exposition on the theory of lozenges).

\begin{lem}\label{lozengelemma}
    Suppose that $\gamma$ and $\gamma'$ are the corners of a lozenge $L$. Let $l^s$ and $l^u$ be the stable and unstable leaves through $\gamma$ and let $r^s$ and $r^u$ be the stable and unstable rays in $l^s$ and $l^u$, respectively, which are sides of $L$. If $l^s\subset \fr\Omega_\lambda$ and $r^u\subset \Omega_\lambda$ for some $\lambda$, then $L\subset \Omega_\lambda$. Similarly, if $l^u\subset \fr\Omega_\lambda$ and $r^s\subset \Omega_\lambda$ for some $\lambda$, then $L\subset \Omega_\lambda$
\end{lem}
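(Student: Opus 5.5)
We treat the first statement; the second follows by exchanging the roles of stable and unstable. The argument combines the structure of a lozenge with the fact that $\Omega_\lambda$ is an open disk whose frontier consists of faces of leaves (Proposition \ref{fenleyprop}).

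Recall the structure of $L$. Every point $x\in L$ lies on a unique unstable leaf $l^u_x$, and $l^u_x$ meets $r^s$ in a point $y_x$. The segment $[y_x,x]\subset l^u_x$ lies in $L\cup r^s$. Symmetrically, the stable leaf through $x$ meets $r^u$, and $L$ is the union over $p\in r^u$ of the open stable segments from $p$ to the opposite side of $L$. The plan is to sweep $L$ by the stable segments emanating from $r^u$.

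The main step is as follows. Fix $p\in r^u\subset\Omega_\lambda$, and let $\sigma_p$ be the open stable segment in $L$ starting at $p$; its other endpoint lies on the unstable side of $L$ through $\gamma'$. Suppose $\sigma_p\not\subset\Omega_\lambda$. Since $\Omega_\lambda$ is open, $\sigma_p\cap\Omega_\lambda$ contains a neighborhood of $p$ in $\sigma_p$. Let $q$ be the first point of $\sigma_p$, moving away from $p$, that is not in $\Omega_\lambda$. Then $q\in\fr\Omega_\lambda$, so $q$ lies on a face $f$ in $\fr\Omega_\lambda$.

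We rule out both possibilities for $f$.
\begin{enumerate}
\item If $f$ is stable, then $f$ is a leaf slice of the stable leaf through $q$, which contains $\sigma_p$. Points of $\sigma_p$ just before $q$ would then lie on $f\subset\fr\Omega_\lambda$, contradicting their membership in the open set $\Omega_\lambda$.
\item If $f$ is unstable, then $f\subset l^u_q$, and $l^u_q$ crosses $L$ and meets $r^s$ at a point $y$. The segment from $q$ to $y$ in $l^u_q$ contains no singular point, because $L$ is a lozenge. Hence this segment lies inside the leaf slice $f$, and in particular $y\in\fr\Omega_\lambda$.
\end{enumerate}

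To close case (2), observe that $r^s\subset l^s\subset\fr\Omega_\lambda$. So $\fr\Omega_\lambda$ would contain a stable and an unstable frontier component meeting transversally at the interior point $y$. We will argue this is impossible. $\Omega_\lambda$ is a disk lying on one side of each frontier face. Near $y$ the two transverse faces cut a neighborhood into four quadrants, and $\Omega_\lambda$ must lie in a single quadrant. But then $\Omega_\lambda$ accumulates only on the rays of $f$ and $l^s$ bounding that quadrant, not on the full leaf slices, contradicting that frontier components are entire faces. Alternatively, we can argue via accumulation: by Proposition \ref{fenleyprop}, $\lambda$ accumulates on $\tilde l^s$ going up with the flow and on $\tilde f$ going down with the flow. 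The lift of the orbit through $y$ would then force $\lambda$ to be approached from both flow directions near a single orbit, which should contradict transversality of $\tilde\varphi$ to $\tilde\F$.

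Granting this, $\sigma_p\subset\Omega_\lambda$ for every $p\in r^u$, and the union of these segments is $L$, so $L\subset\Omega_\lambda$.

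I expect the main obstacle to be making the exclusion in case (2) rigorous. Specifically, one must show that the frontier of a shadow cannot contain two transversally intersecting leaf slices. I would try the quadrant argument first, using that each frontier component is a properly embedded face bounding $\Omega_\lambda$ on one side, and fall back on the accumulation argument from Proposition \ref{fenleyprop} if needed.
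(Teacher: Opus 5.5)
Your proposal is correct and takes essentially the same approach as the paper. The paper also sweeps $L$ by the stable segments through $r^u$, using that $L$ is trivially foliated, and rules out frontier faces meeting $L$ by simply asserting that no unstable leaf crossing $l^s$ can lie in $\fr\Omega_\lambda$; your quadrant argument justifies that step, and even more simply, the connected set $\Omega_\lambda$ lies on one side of $f$ while $l^s$ crosses $f$ at $y$, so part of $l^s$ lies in an open set disjoint from $\overline{\Omega_\lambda}$ and the accumulation fallback is unnecessary.
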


\begin{figure}[H]
    \centering
    \includegraphics[width= 3in]{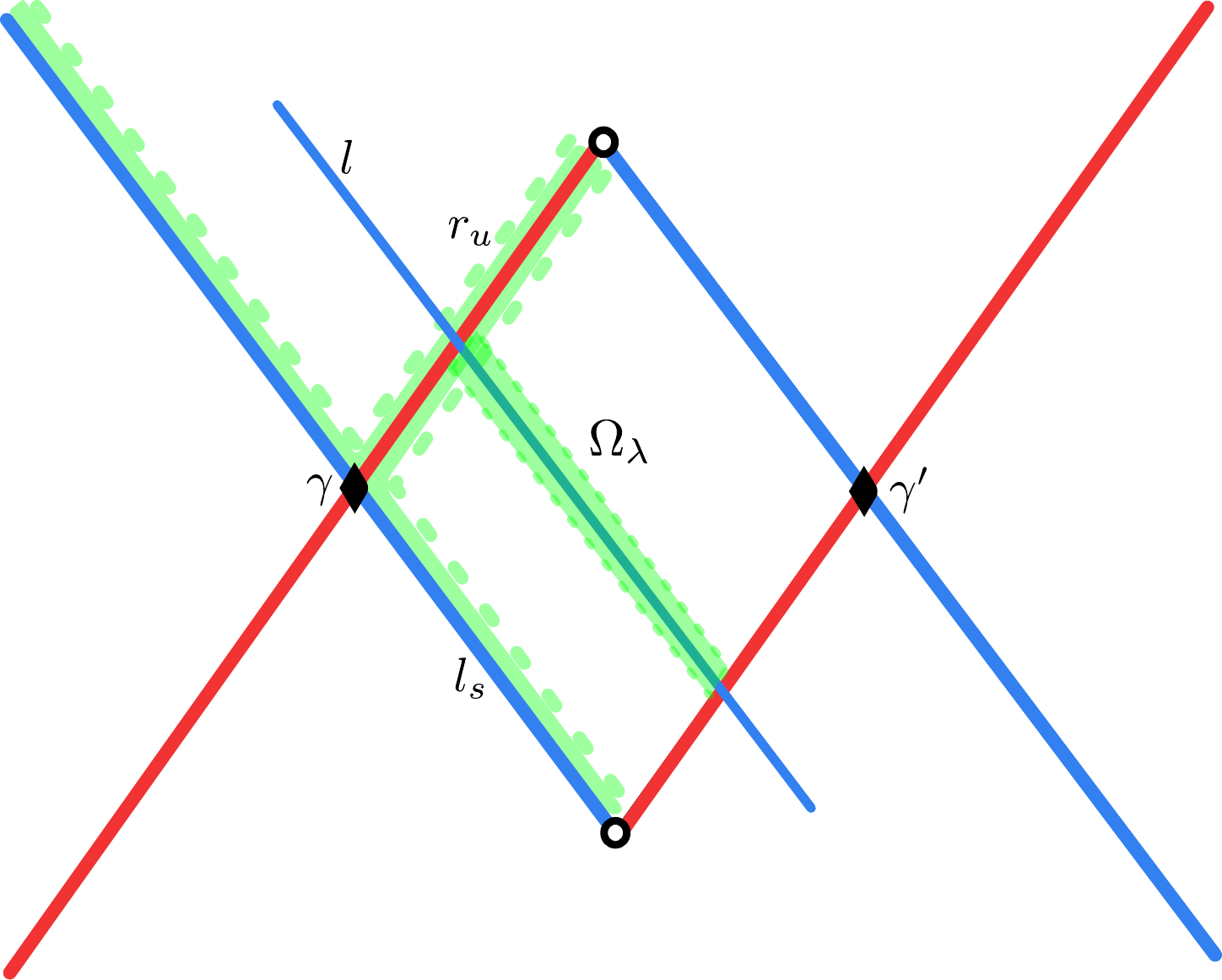}
    \caption{An illustration of the argument in the proof of Lemma \ref{lozengelemma}.}
    \label{fig:lozengelemma}
\end{figure}

\begin{proof}
    We prove the lemma in the case that $l^s\subset \fr\Omega_\lambda$ and $r^u\subset \Omega_\lambda$ for some $\lambda$. The proof in the case of $l^u\subset \fr\Omega_\lambda$ and $r^s\subset \Omega_\lambda$ is analogous.

    By assumption $l^s\subset \fr\Omega_\lambda$. This implies any unstable leaf intersecting $l^s$ can not be contained in $\fr\Omega_\lambda$. Since the interior of a lozenge is trivially foliated {\cite[Remark 2.4.6]{BarthelmeMann2026}} and $r^u\subset\Omega_\lambda$, for any stable leaf $l$ such that $l \cap r^u\neq \emptyset$, we must have $L\cap l\subset \Omega_\lambda$. Doing this for every such $l$ implies that $L\subset \Omega_\lambda$.
\end{proof}

We now proceed to the proof of Theorem \ref{examplethm}.

\begin{proof}[Proof of Theorem \ref{examplethm}]
The strategy of the proof is as follows. First, we construct a solid torus block $B$ obtained by gluing five round handles. The block $B$ contains a semiflow $\psi_B$ and transverse foliation $\F_B$. The flow $\psi_B$ contains five hyperbolic periodic orbits freely homotopic to the core of $B$, and the foliation $\F_B$ contains four leaves $\lambda_j$, $j=1,\ldots, 4$, intersecting the stable and unstable leaves of $\gamma_i$. Following a construction introduced by Gabai \cite{Gabai} and Mosher \cite{Mosher1992b}, via a series of gluings which preserve $\psi_B$ and $\F_B$, we obtain a closed atoroidal three-manifold $M$ with a pseudo-Anosov flow, $\psi$, and a transverse foliation, $\F$. Then, using Lemma \ref{lozengelemma} and combinatorics of the plane, we prove that $\mathcal{O}$ contains a line of exactly four lozenges sharing sides in $\bar\F^u_\psi$ and that there exists a leaf $\tilde{\lambda}_j\in \tilde{\F}$ whose shadow contains $L_1,L_2,$ and $L_3$ and is disjoint from $L_4$. Hence, $\fr\Omega_{\tilde{\lambda}_j}$ contains two branching stable leaves from a cataclysm containing exactly three leaves. By Corollary \ref{orbit2calegari}, $\Lambda^+_u$ contains a diagonal leaf and $\overline{\partial\mathcal{O}^s}\subsetneq \Lambda^+_u$. See Figure \ref{fig:diagshadow} for an illustration of the line of lozenges $\set{L_i}$ and the shadow $\Omega_{\tilde{\lambda}_j}$.

\textbf{\underline{Constructing $B$}:}
We now build the solid torus block $B$. Let $H_1,\ldots,H_5$ be a collection of round handles with semiflows $\psi_i$ and transverse foliations $\F_i$ by ``stack of chairs". Moreover, for $i$ even, we orient the flow lines of $\psi_i$ with the time reversed orientation, for $i=1,\ldots,5$. To construct the solid torus $B$ with the desired properties, we glue an annulus of $R^+(H_i)$ to an annulus of $R^-(H_{i+1})$ for each $i=1,\ldots,4$. In particular, let $R_i^\pm\subset R^\pm(H_i)$ be an annulus oriented by $\psi_i$ and let $a_i^u:=W^u(\gamma_i)\cap R_i^+$ and $a_{i}^s=W^s(\gamma_{i}) \cap R_{i}^-$ be oriented curves with the same orientation as $\gamma_i$. The block $B$ is constructed by gluing $H_i$ to $H_{i+1}$ via a homeomorphism $\phi_i:R_i^+\to R_{i+1}^-$ such that $\phi_i(a_i^u)$ is disjoint from $a^s_{i+1}$ and $\phi_i(a_i^u)$ lies to the left of $a_{i+1}^s$ for $i$ odd and to the right of $a_{i+1}^s$ for $i$ even, for $i=1,\ldots,4$. See Figure \ref{fig:blockfig} for an illustration of the construction of $B$.

The result of the sequence of gluings above is a solid torus $B$ with a semiflow, denoted $\psi_B$, where $\psi_B$ restricts to $\psi_i$ on the inclusion of $H_i$ in $B$. Further, by our choice of $\phi_i$, we have that $W^u(\gamma_i)$ and $W^s(\gamma_i)$ extend in $B$, for each $i=1,\ldots,5$. Hence, each $\gamma_i$ is a hyperbolic periodic orbit of $\psi_B$, for each $i=1,\ldots,5$. 

Observe that $\partial B$ decomposes into the union of six inward pointing and six outward pointing annuli, denoted $R^-(B)$ and $R^+(B)$ respectively, and 12 annuli where $\psi_B$ restricts to the product flow, denoted by $A(B)$. By our choice of $\phi_i$, all $W^u(\gamma_i)$ intersect a common outward pointing annulus in $\partial B$, and $W^s(\gamma_i)$ and $W^s(\gamma_{i+1})$ intersect a common inward pointing annulus for each $i$. 

Since each $H_i$ is equipped with a foliation $\F_i$ where every leaf accumulate on $R^+(H_i)$ and $R^-(H_i)$, by construction, $B$ inherits a foliation $\F_B$. The foliation $\F_B$ restricts to $\F_i$ on the inclusion of $H_i$ in $B$ and contains four annular leaves, $\lambda_i$, coming from the inclusion of $R^+_i$, for $i=1,\ldots, 4$. We now make the following observation regarding the intersection of $\lambda_i$ with the stable and unstable annuli:
\begin{obs}\label{constructionobs}
    It follows from the construction of $B$ that $\lambda_i$ intersects $W^u(\gamma_k)$ and $W^s(\gamma_{i+1})$ away from each of the periodic orbits, for all $k\leq i$ and $i=1,\ldots, 4$. 
\end{obs}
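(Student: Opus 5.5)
This observation is a bookkeeping claim about the block $B$. I would prove it by induction on $i$, following each annulus $W^u(\gamma_k)$ and $W^s(\gamma_{i+1})$ through the chain of gluings $\phi_1,\ldots,\phi_4$.

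\textbf{Step 1: the stable side.} By definition $\lambda_i$ is the image in $B$ of the annulus $R_i^+$, which is identified with $R_{i+1}^-$ by $\phi_i$. Inside $H_{i+1}$, the stable annulus $W^s(\gamma_{i+1})$ meets $R^-(H_{i+1})$, and hence meets $R_{i+1}^-$ in the curve $a_{i+1}^s$. Since $\phi_i(a_i^u)$ is disjoint from $a_{i+1}^s$, the curve $a_{i+1}^s$ is not a periodic orbit of $\psi_B$. Every point of $R_{i+1}^-$ is transverse to the flow, so $a_{i+1}^s$ lies away from all of the $\gamma_j$. This shows $\lambda_i\cap W^s(\gamma_{i+1})\neq\emptyset$ away from the periodic orbits.

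\textbf{Step 2: the unstable side, case $k=i$.} $W^u(\gamma_i)$ meets $R_i^+$ in $a_i^u$, which is a closed curve on the transverse annulus $R_i^+$, hence disjoint from every periodic orbit. This handles $k=i$.

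\textbf{Step 3: the unstable side, case $k<i$, by induction on $i-k$.} Suppose $W^u(\gamma_k)$ meets $\lambda_{i-1}=R_{i-1}^+\cong R_i^-$ in a closed curve $c$ that avoids the periodic orbits. The gluing condition says $\phi_{i-1}(a_{i-1}^u)$ lies strictly to one side of $a_i^s$ in $R_i^-$. By the induction, the curves $W^u(\gamma_k)\cap R_i^-$ for $k<i$ all lie on that same side. To see this, use that the extended unstable leaves are disjoint, and that the consistent left/right alternation is compensated by the reversal of the flow orientation for even $i$. Therefore $c$ is disjoint from $W^s(\gamma_i)$.

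By the dynamics of the round handle recalled above, every orbit of $\psi_{H_i}$ through a point of $R^-(H_i)\setminus W^s(\gamma_i)$ exits through a component of $R^+(H_i)$. Flowing $c$ forward therefore yields a closed curve in $R^+(H_i)$. I then need the exit to lie in $R_i^+$, the glued annulus, rather than in the other component of $R^+(H_i)$. The side of $a_i^s$ on which $c$ lies determines which outgoing annulus is reached, and the side conventions in the definition of $\phi_{i-1}$ (left for odd, right for even, combined with the time reversal) are chosen precisely so that this annulus is $R_i^+$. This is the claim in the construction that all $W^u(\gamma_j)$ meet a common outward-pointing annulus. Hence $W^u(\gamma_k)\cap\lambda_i\neq\emptyset$, again along a curve in a transverse annulus and so away from the periodic orbits.

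\textbf{Main obstacle.} The hard part is the orientation bookkeeping in Step 3: checking that the alternating left/right gluing, together with the reversed flow on the even handles, always sends the earlier unstable annuli to the outgoing annulus $R_i^+$ of $H_i$ rather than the other one. I would verify this with the explicit linear model $\psi_0^t(x,y,z)=(\lambda^t x,\lambda^{-t}y,z+t)$. In that model the sign of $x$ on the entering face determines which face $x=\pm1$ is the exit. I would then track the sign through the gluings for $i=1,\ldots,4$ directly.
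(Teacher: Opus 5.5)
Your overall route (following $a^s_{i+1}$, $a^u_i$ and the earlier unstable curves through the glued annuli) is what the paper has in mind. The paper itself offers nothing beyond ``it follows from the construction of $B$'' and Figure~\ref{fig:blockfig}. Your Steps 1 and 2 are fine. The gap is in Step 3, the only nontrivial case, where the inductive step is asserted rather than proved.

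Mutual disjointness of the unstable leaves is beside the point. What must be shown is that $a^s_i$ does not separate $\phi_{i-1}(a^u_{i-1})$ from the curves $W^u(\gamma_k)\cap R^-_i$ with $k<i-1$. Disjointness of the $W^u(\gamma_k)$ from one another says nothing about where $a^s_i$ sits. Two facts are needed.
\begin{enumerate}
\item[(a)] \emph{Transit in a round handle.} Since $xy$ is constant along orbits of $\psi_0$, the orbit through $(x_0,1,z)$ with $x_0>0$ exits at $x=1$, $y=x_0$. Hence the half of an incoming annulus on one side of $a^s$ is carried onto the half of an outgoing annulus on the same-named side of $a^u$. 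Here annuli carry the orientation induced by the flow and $H$. For the even handles, use the orientation-preserving conjugacy $(x,y,z)\mapsto(y,x,-z)$ to the standard model.
\item[(b)] \emph{Anti-parallel curves.} Because of the time reversal, $\gamma_i$ is freely homotopic in $B$ to $\gamma_{i-1}^{-1}$; this is what the paper uses later when it says $\gamma_1$ and $\gamma_3$ have the same orientation. So $\phi_{i-1}(a^u_{i-1})$ and $a^s_i$ are anti-parallel in $R^-_i$. For anti-parallel disjoint essential curves, ``$A$ lies on side $s$ of $B$'' is equivalent to ``$B$ lies on side $s$ of $A$''.
\end{enumerate}
Write $s_j$ for the side prescribed for $\phi_j$. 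Induction and (a) put all earlier curves on side $s_{i-2}$ of $a^u_{i-1}$. Since $\phi_{i-1}$ preserves the flow orientations, they also lie on side $s_{i-2}$ of $\phi_{i-1}(a^u_{i-1})$. Meanwhile the gluing rule and (b) put $a^s_i$ on side $s_{i-1}\neq s_{i-2}$ of $\phi_{i-1}(a^u_{i-1})$. This is exactly where the left/right alternation is used. With parallel curves, or without the alternation, $a^s_i$ can separate the curves and the claim fails.

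There is a second problem. The left/right rule cannot make the exit annulus equal to $R^+_i$. It only determines which half of $R^-_i$ the curves occupy, and hence which of the two components of $R^+(H_i)$ they flow into. But the paper chooses $R^+_i$ merely as ``an annulus'' of $R^+(H_i)$. You must choose $R^+_i$ to be that component, as drawn in Figure~\ref{fig:blockfig}. With the other choice, every $W^u(\gamma_k)$ with $k<i$ leaves $B$ through $R^+(H_i)\setminus R^+_i$ and never meets $\lambda_i$. Appealing to the paper's remark that all $W^u(\gamma_j)$ meet a common outward-pointing annulus does not close this gap. That remark is essentially the unstable half of the Observation, so the appeal is circular. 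Once (a), (b) and this choice are made explicit, your induction goes through.
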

See Figure \ref{fig:blockfig} for an illustration of the block $B$ and the intersections noted in Observation \ref{constructionobs}.

\begin{figure}[H]
    \centering
    \includegraphics[width= 5in]{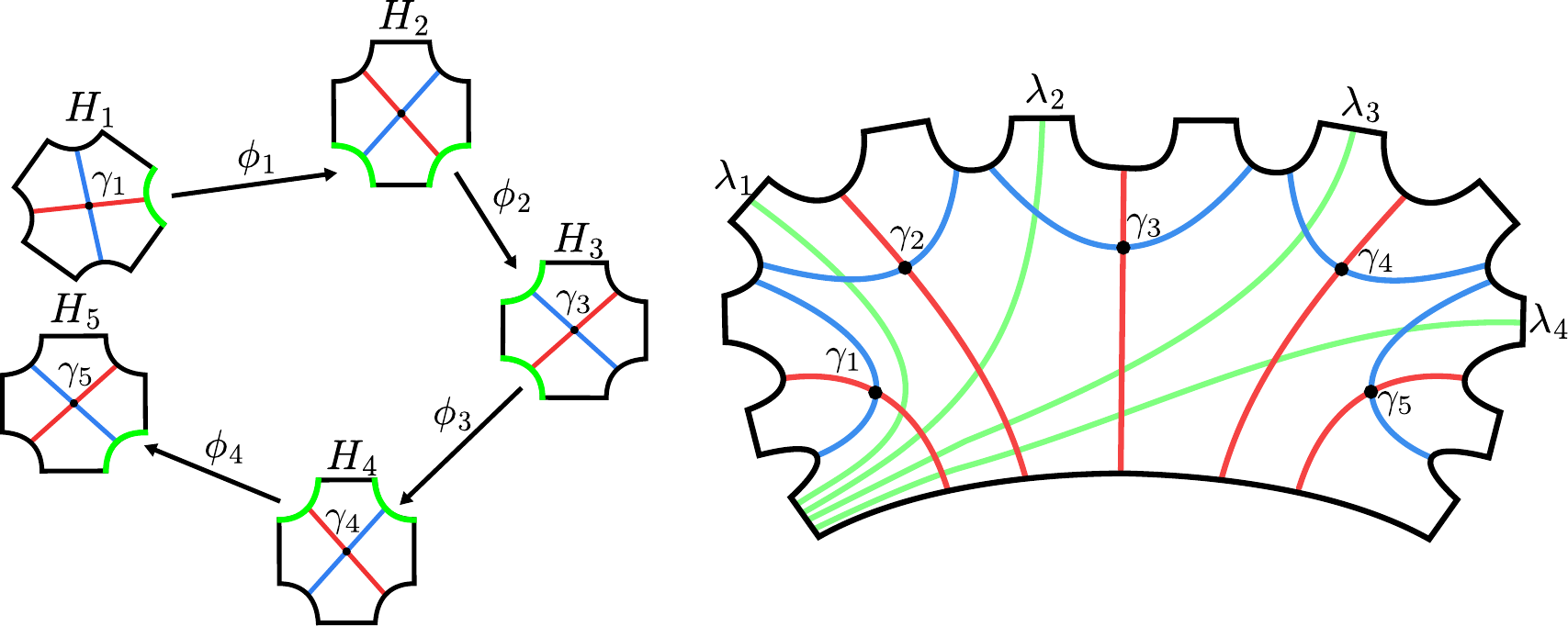}
    \caption{Left: A collection of five round handles $H_1,\ldots, H_5$ where $R^+_i$ is glued to $R^-_{i+1}$ via $\phi_i$. Right: A cross section of the solid torus $B$ with the intersection pattern of stable and unstable annuli with $\partial B$ and $\lambda_1,\ldots, \lambda_4$.}
    \label{fig:blockfig}
\end{figure}

\textbf{\underline{Constructing $N$}:}
Using the block $B$, we construct a compact three-manifold $N$ which will be used to obtain the desired atoroidal three-manifold $M$. The manifold $N$ is obtained by gluing $B$ to $S\times I$, for $S$ a compact surface. We now describe the choice of $S$, and a semiflow and transverse foliation on $S\times I$.

Let $S$ be a compact oriented surface with 12 boundary components and equip $S\times I$ with the product flow. We fix an identification of $A(B)$ with $\partial S\times I$ which maps oriented flow lines of $\psi_B|_{A(B)}$ onto product flow lines on $\partial S\times I$. To define a foliation $\F_S$ on $S\times I$ transverse to the product flow, we recall that a horizontal foliation on $S\times I$ (i.e. a foliation transverse to the $I$ fibers) is determined up to isomorphism by its holonomy representation $\rho:\pi_1(S)\to\Homeo^+(I)$ (see {\cite[Theorem 4.3]{Calegari2007}}). Therefore, let $\rho_S:\pi_1(S)\to \Homeo^+(I)$ be a representation such that $\rho_S|_{\partial S \times I}$ agrees with $\rho_B|_{A(B)}$, where $\rho_B:\pi_1(B)\to \Homeo^+(I)$ is the holonomy representation for $\F_B$. Then, $\rho_S$ determines a horizontal foliation $\F_S$ on $S\times I$, up to isomorphism, which agrees with $\F_B|_{A(B)}$ on $\partial S\times I$, by construction.

Let $N$ be the three-manifold obtained from $B$ and $S\times I$ by gluing $A(B)$ to $\partial S\times I$ via a homeomorphism. We require the gluing to map oriented flow lines of $\psi_B$ to oriented flow lines of the product flow on $S\times I$, and glue leaves of $\F_B|_{A(B)}$ to leaves of $\F_S|_{\partial S\times I}$. The result of this gluing is a compact three-manifold with two homeomorphic boundary components, $R^-(N)$ and $R^+(N)$, and a well-defined semiflow, $\psi_N$, and transverse foliation $\F_N$. Both $R^+(N)$ and $R^-(N)$ are leaves of $\F_N$ and $\psi_N$ points inwards along $R^-(N)$ and outwards along $R^+(N)$. Moreover, each $\lambda_i$ in $\F_B$ extends to a leaf in $\F_N$, which we continue to denote by $\lambda_i$, contained in the interior of $N$. By construction, the five hyperbolic periodic orbits of $\psi_B$ are also hyperbolic periodic orbits of $\psi_N$ and are contained in the interior of $N$. The stable and unstable leaves of $\gamma_i$ intersect $R^-(N)$ and $R^+(N)$, respectively, along a pair of essential closed curves. We label these closed curves $c^s(\gamma_i)_j\subset R^-(N)$ and $c^u(\gamma_i)_j\subset R^+(N)$, for $j=1,2$. 

\textbf{\underline{Constructing $M$}:}
We now construct the manifold $M$ and its associated flow $\psi$ and transverse foliation $\F$. To construct a manifold $M$ from $N$ we specify a gluing map $f:R_+(N)\to R_-(N)$. We choose $f$ such that $\set{f(c^u(\gamma_i)_j)}\cup \set{c^s(\gamma_i)_j}$ \textit{binds} $R_-(N)$ and \textit{efficiently intersects}, properties which we now define. The collection of curves $\set{f(c^u(\gamma_i)_j)}\cup \set{c^s(\gamma_i)_j}$ is said to bind $R_-(N)$ if any essential closed curve in $R^-(N)$ intersects a curve in $\set{f(c^u(\gamma_i)_j)}\cup \set{c^s(\gamma_i)_j}$. Additionally, the collection of curves $\set{f(c^u(\gamma_i)_j)}\cup \set{c^s(\gamma_i)_j}$ is said to efficiently intersect if no arc contained in $\set{f(c^u(\gamma_i)_j)}$ is path homotopic to an arc in $\set{c^s(\gamma_i)_j}$. Finally, we require that none of the curves in $\set{f(c^u(\gamma_i)_{j})}$ are freely homotopic to curves in $\set{c^s(\gamma_i)_{j}}$.

Let $M$ be the manifold obtained by gluing the boundary of $N$ via a map $f: R_+(N)\to R_-(N)$. Then, by our choice of $f$, \cite{MR1184414} and {\cite[Comment 2]{Mosher1992b}}, the resulting $M$ is atoroidal. Additionally, $M$ inherits a flow, $\psi$, and transverse foliation, $\F$. By Gabai \cite{Gabai} (see \cite[Proposition 4.3]{Mosher1992b} for a theorem statement), if necessary, $f$ can be isotoped such that the resulting flow $\psi$ is pseudo-Anosov. This yields the desired $M$, $\psi$, and $\F$. We now show that the periodic orbits $\gamma_i$ lift to corners of a line of lozenges $\set{L_i}_{i=1,\ldots,4}$, and that $L_1,L_2,L_3\subset \Omega_{\tilde{\lambda}_3}$, but $\Omega_{\tilde{\lambda}_3}$ is disjoint from $L_4$.

\textbf{\underline{Properties of $\mathcal{O}$ and $\Omega_{\tilde{\lambda}_j}$}:}
Let $\tilde{\gamma}_i$ and $\tilde{\lambda}_j$ be a lift of $\gamma_i$ and $\lambda_j$ in $\widetilde{M}$ and let $\bar{\gamma}_i$ be the projection of $\tilde{\gamma}_i$ to $\mathcal{O}$. Since $\gamma_1,\ldots, \gamma_5$ are a collection of freely homotopic periodic orbits, by \cite{Fen95b}, $\bar{\gamma}_1,\ldots, \bar{\gamma}_5$ are corners in a chain of lozenges. Moreover, we claim that no other periodic orbit of $\psi$ is in the same free homotopy class as $\gamma_i$. To see this, we observe that every periodic orbit of $\psi$, other than $\gamma_1,\ldots,\gamma_5$, intersects the quotient surface obtained from gluing $R^+(N)$ to $R^-(N)$ in $M$, with nonzero algebraic intersection. But, by construction, $\gamma_1,\ldots, \gamma_5$ have trivial algebraic intersection with this surface. Thus, no other periodic orbit of $\psi$ lies in the same free homotopy class as $\gamma_1,\ldots,\gamma_5$.

We now show that the chain of lozenges containing $\bar{\gamma}_i$ is a line consisting of exactly four lozenges. Let ${l_i^s}$ and $l^u_i$ denote the stable and unstable leaves through $\bar{\gamma}_i$. It follows from Observation \ref{constructionobs} that a half leaf of $l_k^u$ and a half leaf of $l^s_{j+1}$ are contained in $\Omega_{\tilde\lambda_j}$, and $l_k^s$ and $l^u_{j+1}$ are contained in the frontier of $\Omega_{\tilde\lambda_j}$, for all $k\leq j$ and each $j=1,\ldots,4$. See Figure \ref{fig:blockfig}.

Consider the shadow of $\tilde\lambda_1$. By Observation \ref{constructionobs}, we have that $l_1^s,l^u_2\subset\fr\Omega_{\tilde{\lambda}_1}$, and that an unstable half leaf based at $\bar\gamma_1$ and a stable half leaf based at $\bar\gamma_2$ are contained in $\Omega_{\tilde{\lambda}_1}$. By Lemma \ref{lozengelemma}, the lozenge with corner $\bar\gamma_1$ and the lozenge with corner $\bar\gamma_2$ are contained in $\Omega_{\tilde{\lambda}_1}$. Since $\Omega_{\tilde\lambda_1}$ is an open disk in $\mathcal{O}$ and $\bar\gamma_1$ and $\bar\gamma_2$ are connected by a chain of lozenges, if $\bar\gamma_1$ and $\bar\gamma_2$ are not the corners of the same lozenge, $\Omega_{\tilde\lambda_1}$ must contain some $\bar\gamma_i$. But $\lambda_1$ is disjoint from $\gamma_i$, for each $i$, so $\bar\gamma_1$ and $\bar\gamma_2$ must be corners of a single lozenge. Denote this lozenge by $L_1$. See Figure \ref{fig:lineoflozenge}.

\begin{figure}[H]
    \centering
    \includegraphics[width= 5in]{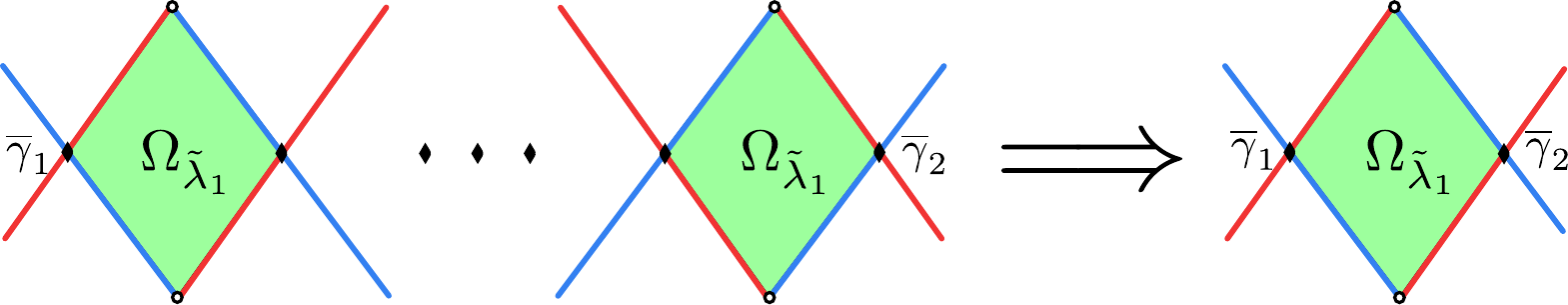}
    \caption{The shadow of $\Omega_{\tilde{\lambda}_1}$ contains the lozenges with corners $\bar{\gamma}_1$ and $\bar{\gamma}_2$ (left). Since $\lambda_1$ is disjoint from the periodic orbits $\gamma_1$ and $\gamma_2$ and $\Omega_{\tilde{\lambda}_1}$ is an open disk, $\bar{\gamma}_1$ and $\bar{\gamma}_2$ are corners of the same lozenge (right).}
    \label{fig:lineoflozenge}
\end{figure}

Similarly, in the case of $\tilde\lambda_2$, by Observation \ref{constructionobs} we have that there exist unstable half leaves based at $\bar\gamma_1$ and $\bar{\gamma}_2$ and a stable half leaf through $\bar\gamma_3$ which are contained in $\Omega_{\tilde{\lambda}_2}$, and $l^s_1,l^s_2,l^u_3\subset\fr\Omega_{\tilde\lambda_2}$. By Lemma \ref{lozengelemma}, $L_1$ and a lozenge with corner $\gamma_3$ are contained in $\Omega_{\tilde{\lambda}_2}$. Since $l^s_1,l^s_2\subset \fr\Omega_{\tilde{\lambda}_2}$ and $L_1\subset \Omega_{\tilde{\lambda}_2}$ we have that  $\bar\gamma_3$ is contained in the disk bounded by $l^s_1$ and $l^s_2$. Recall, $\bar\gamma_3$ is the corner of a chain of lozenges containing $\bar\gamma_1$ and $\bar\gamma_2$. Since $\Omega_{\tilde{\lambda}_2}$ is an open disk and $\lambda_2$ is disjoint from $\gamma_4$ and $\gamma_5$, it follows that $\bar\gamma_3$ is the corner of a lozenge with $\bar\gamma_1$ or $\bar\gamma_2$. By construction of $B$, we have that $\gamma_1$ and $\gamma_3$ have the same orientation. This implies that $\bar\gamma_2$ and $\bar\gamma_3$ are corners of a single lozenge. Denote this lozenge by $L_2$. Hence, $L_1$ and $L_2$ are adjacent lozenges sharing side $l^u_2$.

By repeating a similar argument using $\tilde\lambda_3$ and $\tilde\lambda_4$, we conclude that $\bar\gamma_1, \ldots,\bar\gamma_5$ are the corners of a line of lozenges $L_1,\ldots, L_4$ sharing unstable sides, where $\bar\gamma_j$ and $\bar\gamma_{j+1}$ are the corners of lozenge $L_{j}$ for $j=1,\ldots,4$. 

We claim $\tilde{\lambda}_3$ is a leaf with the desired shadow. By the observation above, there are unstable half leaves based at $\bar\gamma_1,\bar\gamma_2,$ and $\bar\gamma_3$ and a stable half leaf based at $\bar\gamma_4$  contained in $\Omega_{\tilde{\lambda}_3}$, and $l^s_1,l_2^s, l^s_3, l^u_4\subset \fr\Omega_{\tilde\lambda_3}$. By Lemma \ref{lozengelemma}, we conclude $L_1,L_2,L_3\subset \Omega_{\tilde{\lambda}_3}$. Since $l^u_4\subset \fr\Omega_{\tilde{\lambda}_3}$ and the line of lozenges share unstable sides, $L_4$ is disjoint from $\Omega_{\tilde{\lambda}_3}$. Hence, $\fr\Omega_{\tilde{\lambda}_3}$ contains a stable frontier chain, $C$, consisting of two branching leaves from a cataclysm consisting of exactly three branching leaves. By Corollary \ref{orbit2calegari}, $\End(C)$ is a diagonal leaf in $\Lambda_u^+$ and therefore $\overline{\partial\mathcal{O}^s}\subsetneq \Lambda^+_u$.
\end{proof}

\begin{rem}
    The construction in Theorem \ref{examplethm} can be extended to a family of atoroidal three-manifolds, $M_n$, each with a pseudo-Anosov flow, $\psi_n$, and transverse foliations $\F_n$. Such an $M_n, \psi_n$, and $\F_n$ can be constructed using $n+1$ round handles to build a block $B_n$ and following the proof of Theorem \ref{examplethm} analogously. Hence, $\psi_n$ and $\F_n$ have the property that the orbit space of $\psi_n$ contains a line of $n$ lozenges and that there exists a leaf $\lambda\in \tilde{\F}_n$ whose shadow contains exactly $k$ lozenges, for any $n\geq 4$ and $k< n$.
\end{rem}
\bibliographystyle{alpha}
\bibliography{simlamcitations}
\end{document}